\newcommand{\plim}{\varprojlim}
\newcommand{\mcal}{\mathcal}
\newcommand{\mbf}{\mathbf}
\newcommand{\mfrak}{\mathfrak}
\newcommand{\mbb}{\mathbb}
\newcommand{\mrm}{\mathrm}
\newcommand{\mfS}{\mathfrak{S}}
\newcommand{\vphi}{\varphi}
\newcommand{\mfM}{\mathfrak{M}}
\newcommand{\mfN}{\mathfrak{N}}
\newcommand{\mft}{\mathfrak{t}}
\newcommand{\whR}{\widehat{\mathcal{R}}}
\newcommand{\Mod}{\mrm{Mod}}
\newcommand{\cO}{\mathcal{O}}
\newcommand{\e}{\varepsilon}
\newtheorem{theorem}{Theorem}[section]
\newtheorem{corollary}[theorem]{Corollary}
\newtheorem{lemma}[theorem]{Lemma}
\newtheorem{proposition}[theorem]{Proposition}
\theoremstyle{definition}
\newtheorem{definition}[theorem]{Definition}
\newtheorem{remark}[theorem]{Remark}
\newtheorem{question}[theorem]{Question}
\newtheorem*{acknowledgments}{Acknowledgments}
\title{Lattices in potentially semi-stable representations and weak $(\vphi,\hat{G})$-modules}
\author{Yoshiyasu Ozeki\footnote{
Research Institute for Mathematical Sciences, Kyoto University,
Kyoto 606-8502, JAPAN.
\endgraf
e-mail: {\tt yozeki@kurims.kyoto-u.ac.jp}
\endgraf
Supported by JSPS KAKENHI Grant Number 25$\cdot$173}}
\date{}
\begin{document}
\maketitle

\begin{abstract}
Let $p$ be a prime number and $r\ge 0$ an integer.
In this paper, 
we prove that there exists an anti-equivalence 
between the category of weak $(\vphi,\hat{G})$-modules of height $\le r$
and a certain subcategory of the category of Galois stable $\mbb{Z}_p$-lattices in 
potentially semi-stable representations with Hodge-Tate weights
in $[0,r]$.
This gives an answer to a Tong Liu's question 
about the essential image of a functor on weak $(\vphi,\hat{G})$-modules.
For a proof, following Liu's methods,
we  construct linear algebraic data which classify 
lattices in potentially semi-stable representations.
\end{abstract}


\section{Introduction}

Let $K$ be a complete discrete valuation field of mixed characteristics $(0,p)$ 
with perfect residue field. 
We take  a system of $p$-power roots $(\pi_n)_{n\ge 0}$  
of a uniformizer $\pi$ of $K$
such that $\pi_0=\pi$ and $\pi^p_{n+1}=\pi_n$.
We denote by $G_K$ and $G_{K_n}$
absolute Galois groups of $K$ and $K_n:=K(\pi_n)$, respectively. 

For applications to interesting problems such as modularity liftings,
it is useful to study an integral version of Fontaine's $p$-adic Hodge theory,
which is called integral $p$-adic Hodge theory.  
It is important in  integral $p$-adic Hodge theory 
to construct ``good'' linear algebraic data
which classify 
$G_K$-stable $\mbb{Z}_p$-lattices
in semi-stable, or crystalline, $\mbb{Q}_p$-representations of $G_K$
with Hodge-Tate weights in $[0,r]$.
Nowadays various such linear algebraic data are constructed; 
for example, so called Fontaine-Laffaille modules, 
Wach modules and Breuil modules.
It is one of the obstructions for the use of these algebraic data that 
we can not use them without restrictions
on the absolute ramification index $e$ of $K$ and (or) $r$.
In \cite{Li3}, 
based on a Kisin's insight \cite{Ki} for 
a classification of lattices in semi-stable representations,
Tong Liu defined notions of  
$(\vphi,\hat{G})$-modules
and weak
$(\vphi,\hat{G})$-modules.
He constructed 
a contravariant fully faithful functor 
$\hat{T}$ from the category 
of weak 
$(\vphi,\hat{G})$-modules of height $\le r$
into the the category of free $\mbb{Z}_p$-representations of $G_K$.
It is the main theorem of {\it loc.\ cit.}
that, without any restriction on $e$ and $r$, 
$\hat{T}$ induces an anti-equivalence between 
the category of $(\vphi,\hat{G})$-modules of height $\le r$ and 
the category of lattices in semi-stable $\mbb{Q}_p$-representations of $G_K$
with Hodge-Tate weights in $[0,r]$.
In the end of {\it loc. cit.},
he posed the following question:
\begin{question}
\label{question}
What is the essential image of the functor $\hat{T}$ on  {\it weak}
$(\vphi,\hat{G})$-modules?
\end{question} 

\noindent 
He showed that, if a representation of $G_K$ corresponds to 
a weak 
$(\vphi,\hat{G})$-module of height $\le r$, then
it is semi-stable over $K_n$ for some $n\ge 0$
and has Hodge-Tate weights in $[0,r]$.
However,  the converse does not hold in general. 

In this paper, we give an answer to Question \ref{question}.
Denote by $m_0$ the maximum integer 
such that $K$  
contains $p^{m_0}$-th roots of unity.
For any non-negative integer $n$,
we denote by $\mcal{C}^r_n$ 
the category  of free $\mbb{Z}_p$-representations $T$ of $G_K$
which satisfy the following property;
there exists a semi-stable $\mbb{Q}_p$-representation $V$ of $G_K$
with Hodge-Tate weights in $[0,r]$
such that 
$T\otimes_{\mbb{Z}_p}\mbb{Q}_p$ is isomorphic to $V$ as representations of $G_{K_n}$.
Our main result is as follows.

\begin{theorem}
\label{Main1}
The essential image of 
the functor $\hat{T}$ is $\mcal{C}^r_{m_0}$.
\end{theorem}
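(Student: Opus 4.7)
The plan is to establish Theorem \ref{Main1} by proving the two inclusions of essential images separately, using Galois descent along the intermediate extension $K_{m_0}/K$ to bridge Liu's existing theorem (for genuine $(\vphi,\hat{G})$-modules, over any base) and the category of weak $(\vphi,\hat{G})$-modules over $K$. The structural fact that singles out $m_0$ is that $K_{m_0}/K$ is Galois --- because $K$ contains $\mu_{p^{m_0}}$, the Kummer extension $K(\pi_{m_0})/K$ is Galois with cyclic Galois group --- whereas $K_n/K$ for $n>m_0$ is not. This makes $K_{m_0}$ the largest base over which one can replay Liu's genuine theory and still descend to $K$.

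For $\mrm{Im}(\hat{T})\subseteq \mcal{C}^r_{m_0}$, take a weak $(\vphi,\hat{G})$-module $\hat{\mfM}$ of height $\le r$ and set $T=\hat{T}(\hat{\mfM})$. Liu's cited result already gives semi-stability of $T[1/p]$ over some $K_n$, but we must sharpen $n$ to $m_0$. The approach is to base change the underlying weak datum along the ring extension corresponding to $K_{m_0}$, and verify that the weak compatibility axioms restrict to the genuine $(\vphi,\hat{G})$-module axioms on the open subgroup of $\hat{G}$ that fixes $K_{m_0}$; then Liu's anti-equivalence for genuine objects forces $T[1/p]|_{G_{K_{m_0}}}$ to be semi-stable.

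For the converse inclusion $\mcal{C}^r_{m_0}\subseteq \mrm{Im}(\hat{T})$, given $T\in\mcal{C}^r_{m_0}$ with semi-stable model $V$ satisfying $V|_{G_{K_{m_0}}}\cong T[1/p]|_{G_{K_{m_0}}}$, apply Liu's theorem to the $G_{K_{m_0}}$-stable lattice $T$ inside $V$ to produce a genuine $(\vphi,\hat{G})$-module $\hat{\mfM}_0$ over the base $K_{m_0}$. Because $T$ is in fact $G_K$-stable, the quotient $\mrm{Gal}(K_{m_0}/K)$ acts functorially on $\hat{\mfM}_0$, and one then assembles the descent datum with $\hat{\mfM}_0$ into a weak $(\vphi,\hat{G})$-module $\hat{\mfM}$ of height $\le r$ over $K$ with $\hat{T}(\hat{\mfM})\cong T$. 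The compatibility axioms over $K$ are only \emph{weak} rather than genuine because $V$ is not semi-stable as a $G_K$-representation.

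The principal obstacle is making this descent precise at the level of linear-algebraic data: one must identify exactly which part of the $\hat{G}$-compatibility survives descent from $K_{m_0}$ to $K$ and confirm that the residue coincides with Liu's weak axioms, neither more nor less. This is where the auxiliary classification promised in the abstract --- linear algebraic data for lattices in potentially semi-stable representations, constructed by mimicking Liu's methods --- must be developed as a separate anti-equivalence, and then identified with the category of weak $(\vphi,\hat{G})$-modules. A careful comparison of the relevant period rings under $G_K$ versus $G_{K_{m_0}}$, together with a fully faithful descent statement for the extra $\mrm{Gal}(K_{m_0}/K)$-action, should close the loop and pin down $m_0$ as the sharp invariant.
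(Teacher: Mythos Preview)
Your strategy conflates two categories that the paper carefully distinguishes: $\mcal{C}^r_{m_0}$ (those $T$ for which $T[1/p]|_{G_{K_{m_0}}}$ extends to a semi-stable $G_K$-representation) and $\mrm{Rep}^{r,K_{m_0}\mathchar`-\mrm{st}}_{\mbb{Z}_p}(G_K)$ (those $T$ for which $T[1/p]$ is merely semi-stable over $K_{m_0}$). The latter is strictly larger when $m_0,r\ge 1$; see Proposition~\ref{prop:rem}(2). Your plan for $\mrm{Im}(\hat{T})\subset\mcal{C}^r_{m_0}$ stops at ``$T[1/p]|_{G_{K_{m_0}}}$ is semi-stable'', which only lands you in the larger category. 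Conversely, your descent argument for $\mcal{C}^r_{m_0}\subset\mrm{Im}(\hat{T})$ never uses the semi-stable model $V$ over $K$ except to embed $T$; but the $\mrm{Gal}(K_{m_0}/K)$-``action'' you extract lives on $W(R)\otimes_{\vphi,\mfS_{K_{m_0}}}\mfM_0$, not on $\mfM_0$, and does not by itself force the Kisin module to descend to $\mfS_K$. If it did, your argument would apply verbatim to every $T\in\mrm{Rep}^{r,K_{m_0}\mathchar`-\mrm{st}}_{\mbb{Z}_p}(G_K)$, contradicting the strict inclusion. (Relatedly, the claim that $\mrm{Gal}(K_{m_0}/K)$ ``acts functorially on $\hat{\mfM}_0$'' is not right: an element $\sigma\in G_K$ intertwines $T$ with its conjugate-twist as a $G_{K_{m_0}}$-representation, not with itself, so Liu's full faithfulness does not hand you an automorphism of $\hat{\mfM}_0$.)

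The paper fills this gap with Proposition~\ref{Thm2}: a semi-stable $G_{K_n}$-representation whose $G_{K_\infty}$-restriction comes from a Kisin module over $\mfS_K$ extends uniquely to a semi-stable $G_K$-representation, proved by showing $K_n\otimes_K\mrm{Fil}^iD_K\simeq\mrm{Fil}^iD_{K_n}$ via a Breuil-module computation. In the actual argument for $\mcal{C}^r_{m_0}\subset\mrm{Im}(\hat{T})$, the Kisin module over $\mfS_K$ is not obtained by Galois descent from $K_{m_0}$ but is supplied directly by $V$ (semi-stable over $K$, so Kisin's theory applies at level $K$); the auxiliary $(\vphi,\hat{G}_{K_{m_0}},K)$-theory is then used only to manufacture the $\hat{G}_K$-action on $\whR_K\otimes_{\vphi,\mfS_K}\mfM$ (Proposition~\ref{Main2}). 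Finally, your assertion that the weak axioms become genuine over $K_{m_0}$ ``because $K_{m_0}/K$ is Galois'' names the right invariant but not the right mechanism: one needs the relation $A(0)^{\chi(g)-1}=I_d$ forced by $g\tau g^{-1}=\tau^{\chi(g)}$, together with an analysis of $\chi(H_K)\subset\mbb{Z}_p^\times$, to conclude $A(0)^{p^{m_0}}=I_d$ (Lemma~\ref{lastlemma}).
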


\noindent
Therefore, 
we conclude that $\hat{T}$ induces an anti-equivalence between 
the category of weak 
$(\vphi,\hat{G})$-modules of height $\le r$
and the category $\mcal{C}^r_{m_0}$.

The crucial part of our proof is to show the relation 
$$
\mcal{C}^r_{m_0}\subset \mcal{C}^r\subset \mcal{C}^r_m
$$
where 
$\mcal{C}^r$ is
the essential image of 
the functor $\hat{T}$ and 
$m$ is the maximum integer 
such that the maximal unramified extension of $K$  
contains $p^m$-th roots of unity (cf.\ Lemma \ref{Lem:Main1'}).
We have two keys for our proof of this statement.
The first one is Proposition \ref{Main2},
which gives a relation between weak $(\vphi,\hat{G})$-modules 
and ``finite height'' representations.
For the proof, following the method of Liu's arguments of \cite{Li3} and \cite{Li4},
we construct certain linear data which classifies 
lattices in potentially semi-stable representations.
This is a direct generalization of the main result of \cite{Li3} 
(the idea for our proof is essentially due to Liu's previous works).
The second one is Proposition  \ref{Thm2};
it says
that the $G_{K_n}$-action of a finite height representation of $G_K$ 
which is semi-stable over $K_n$
extends to a $G_K$-action which is semi-stable over $K$.\\

\begin{acknowledgments}
The author thanks Akio Tamagawa 
who gave him useful advice in the proof of Lemma \ref{lastlemma}
in the case where $p$ is odd and $m_0=0$.
This work is supported by JSPS KAKENHI Grant Number 25$\cdot$173.
\end{acknowledgments}

\noindent
{\bf Notation :}
For any topological group $H$,
a free $\mbb{Z}_p$-representation of $H$ 
(resp.\  a $\mbb{Q}_p$-representation of $H$)
is a finitely generated free $\mbb{Z}_p$-module equipped 
with a continuous $\mbb{Z}_p$-linear $H$-action
(resp.\ a finite dimensional $\mbb{Q}_p$-vector space equipped 
with a continuous $\mbb{Q}_p$-linear $H$-action).
We denote by $\mrm{Rep}_{\mbb{Z}_p}(H)$
(resp.\ $\mrm{Rep}_{\mbb{Q}_p}(H)$)
the category of them.
For any field $F$, we denote by $G_F$ the absolute Galois group of $F$
(for a fixed separable closure of $F$).


\section{Preliminary}

In this section, we recall some results on Liu's $(\vphi,\hat{G})$-modules
and related topics.
Throughout this paper, let $p\ge 2$ be a prime number.
Let $K$ be a complete discrete valuation field of mixed characteristics $(0,p)$ 
with perfect residue field $k$.
Let $L$ be a finite extension of $K$.
Take a uniformizer $\pi_L$ of $L$ and
a system of $p$-power roots $(\pi_{L,n})_{n\ge 0}$  
of $\pi_L$ such that $\pi_{L,0}=\pi_L$ and 
$\pi^p_{L,n+1}=\pi_{L,n}$.
We denote by $k_L$ the residue field of $L$. 
Put $L_n=L(\pi_{L,n}), L_{\infty}=\cup_{n\ge 0}L_n$ 
and define $\hat{L}$ to be the Galois closure of $L_{\infty}$ 
over $L$. 
We denote by $H_L$ and $\hat{G}_L$ the Galois group of $\hat{L}/L_{\infty}$ and $\hat{L}/L$,
respectively.
We denote by $K^{\mrm{ur}}$ and $L^{\mrm{ur}}$ maximal unramified extensions of $K$ and $L$, respectively.
Note that we have $L^{\mrm{ur}}=LK^{\mrm{ur}}$. 

Let $R=\plim \cO_{\overline{K}}/p$, 
where $\cO_{\overline{K}}$ is 
the integer ring of $\overline{K}$
and the transition maps are 
given by the $p$-th power map.
We write
$\underline{\pi_L}:=(\pi_{L,n})_{n\ge 0}\in R$.
Let $\mfS_{L}:=W(k_L)[\![u_L]\!]$ be the formal power series ring with indeterminate $u_L$.
We define a Frobenius endomorphism $\vphi$ of $\mfS_L$ by $u_L \mapsto u_L^p$
extending the Frobenius of $W(k_L)$.
The $W(k_L)$-algebra embedding $W(k_L)[u_L]\hookrightarrow W(R)$
defined by $u_L\mapsto [\underline{\pi_L}]$
extends to $\mfS_L\hookrightarrow W(R)$
where $[\ast]$ is the Teichm\"uller representative.

We denote by $\mrm{Mod}^r_{/\mfS_L}$ the category of 
$\vphi$-modules $\mfM$ over $\mfS_L$ which satisfy the following:
\begin{itemize}
\item $\mfM$ is free of finite type over $\mfS_L$; and 
\item $\mfM$ is of height $\le r$
in the sense that
$\mrm{coker}(1\otimes \vphi\colon \mfS_L\otimes_{\vphi,\mfS_L}\mfM\to \mfM)$ is killed by $E_L(u_L)^r$.
\end{itemize}

\noindent
Here,  $E_L(u_L)$ is the minimal polynomial of $\pi_L$ over $W(k_L)[1/p]$,
which is an Eisenstein polynomial.
We call objects of this category {\it Kisin modules of height $\le r$ over $\mfS_L$}.
We define a contravariant functor 
$T_{\mfS_L}\colon \mrm{Mod}^r_{/\mfS_L}\to \mrm{Rep}_{\mbb{Z}_p}(G_{L_{\infty}})$
by
$$
T_{\mfS_L}(\mfM):=
\mrm{Hom}_{\mfS_L,\vphi}(\mfM, W(R))
$$
\noindent
for an object $\mfM$ of  $\mrm{Mod}^r_{/\mfS_L}$.
Here a $G_{L_\infty}$-action on 
$T_{\mfS_L}(\mfM)$ is given by 
$(\sigma.g)(x)=\sigma(g(x))$ 
for $\sigma\in G_{L_\infty}, g\in T_{\mfS_L}(\mfM), x\in \mfM$.

\begin{proposition}[{\cite[Corollary 2.1.4 and Proposition 2.1.12]{Ki}}]
\label{Kisin}
The functor 
$T_{\mfS_L}\colon \mrm{Mod}^r_{/\mfS_L}\to \mrm{Rep}_{\mbb{Z}_p}(G_{L_{\infty}})$
is exact and fully faithful.
\end{proposition}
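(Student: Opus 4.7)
The plan is to reduce the statement to Fontaine's equivalence of categories between étale $(\vphi,\mcal{O}_\mcal{E})$-modules and $\mbb{Z}_p$-representations of $G_{L_\infty}$, where $\mcal{O}_\mcal{E}$ denotes the $p$-adic completion of $\mfS_L[1/u_L]$. The Teichm\"uller embedding $\mfS_L\hookrightarrow W(R)$ extends to $\mcal{O}_\mcal{E}\hookrightarrow W(\mrm{Frac}\,R)^{\wedge}$, and the corresponding inclusion of maximal unramified extensions $\mcal{O}_{\mcal{E}^{\mrm{ur}}}\hookrightarrow W(\mrm{Frac}\,R)^{\wedge}$ realizes Fontaine's period ring for $G_{L_\infty}$-representations. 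The first key reduction step is to verify that for $\mfM\in\mrm{Mod}^r_{/\mfS_L}$, the natural map
\[
T_{\mfS_L}(\mfM)=\mrm{Hom}_{\mfS_L,\vphi}(\mfM,W(R))\longrightarrow \mrm{Hom}_{\mcal{O}_\mcal{E},\vphi}(\mcal{O}_\mcal{E}\otimes_{\mfS_L}\mfM,\mcal{O}_{\mcal{E}^{\mrm{ur}}})
\]
is a $G_{L_\infty}$-equivariant isomorphism; the right hand side is the standard étale-$\vphi$-module functor applied to the base change $\mcal{O}_\mcal{E}\otimes_{\mfS_L}\mfM$.

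Granting this reduction, exactness is essentially formal. Base change along $\mfS_L\to\mcal{O}_\mcal{E}$ is exact because Kisin modules are finite free over $\mfS_L$ and $\mcal{O}_\mcal{E}$ is obtained from $\mfS_L$ by localization and $p$-adic completion; Fontaine's functor is an anti-equivalence and hence exact. Composing yields exactness of $T_{\mfS_L}$.

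Full faithfulness is the heart of the matter. By Fontaine's equivalence, it suffices to show that the base change functor $\mfM\mapsto\mcal{O}_\mcal{E}\otimes_{\mfS_L}\mfM$ is fully faithful on $\mrm{Mod}^r_{/\mfS_L}$. Concretely, given $\mfM,\mfN\in\mrm{Mod}^r_{/\mfS_L}$ and an $\mcal{O}_\mcal{E}$-linear, $\vphi$-equivariant map $\tilde{f}\colon \mcal{O}_\mcal{E}\otimes_{\mfS_L}\mfM\to\mcal{O}_\mcal{E}\otimes_{\mfS_L}\mfN$, I would need to prove $\tilde{f}(\mfM)\subset\mfN$. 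The approach is to consider the finitely generated $\mfS_L$-submodule $\mfN':=\mfN+\tilde{f}(\mfM)$ of $\mcal{O}_\mcal{E}\otimes_{\mfS_L}\mfN$, which is $\vphi$-stable and agrees with $\mfN$ after inverting $u_L$, and then to argue that the quotient $\mfN'/\mfN$ must vanish. Since this quotient is killed by a power of $u_L$, the task becomes the elimination of $u_L$-denominators.

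The main obstacle is precisely this last step: controlling the interaction between $\vphi$-stability and bounded height to rule out $u_L$-denominators. The idea is to combine the $\vphi$-equivariance, the identity $\vphi(u_L)=u_L^p$, and the height $\le r$ hypothesis (so that $E_L(u_L)^r$ annihilates the cokernel of $1\otimes\vphi$ on both $\mfM$ and $\mfN$) to obtain, by iterating $\vphi$, a contradiction between the $u_L$-adic growth forced by Frobenius and the bounded denominators available to lie in $\mfN'$. This step is the technical core of Kisin's argument and is where the height hypothesis is genuinely used; outside of it, the rest of the proof is a bookkeeping of Fontaine's equivalence.
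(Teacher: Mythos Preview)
The paper does not give its own proof of this proposition; it simply cites Kisin's paper \cite{Ki}, specifically Corollary~2.1.4 and Proposition~2.1.12, and moves on. Your outline is essentially Kisin's argument: the comparison $T_{\mfS_L}(\mfM)\simeq \mrm{Hom}_{\mcal{O}_\mcal{E},\vphi}(\mcal{O}_\mcal{E}\otimes_{\mfS_L}\mfM,\mcal{O}_{\mcal{E}^{\mrm{ur}}})$ together with Fontaine's equivalence reduces both exactness and full faithfulness to properties of the base change $\mfM\mapsto \mcal{O}_\mcal{E}\otimes_{\mfS_L}\mfM$, and the substantive point is exactly the ``elimination of $u_L$-denominators'' you describe, which is the content of \cite[Proposition~2.1.12]{Ki}. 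So your plan is correct and matches the source the paper defers to; the only caveat is that you have sketched rather than executed the denominator-clearing step, but that is where all the work lies and your indication of how the height bound and the relation $\vphi(u_L)=u_L^p$ interact is the right mechanism.
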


Let $S_L$ be the $p$-adic completion of $W(k_L)[u_L, \frac{E_L(u_L)^i}{i!}]_{i\ge 0}$
and endow $S_L$  with the following 
structures:
\begin{itemize}
\item a continuous $\vphi_{W(k_L)}$-semilinear Frobenius $\vphi\colon S_L\to S_L$ 
      defined by $u_L\mapsto u_L^p$. 
\item a continuous $W(k_L)$-linear derivation  $N\colon S_L\to S_L$
      defined by $N(u_L)=-u_L$.
\item a decreasing filtration 
      $(\mrm{Fil}^iS_L)_{i\ge 0}$ on $S_L$. 
      Here $\mrm{Fil}^iS_L$ 
      is the $p$-adic closure of the ideal generated 
      by $\frac{E_L(u_L)^j}{j!}$ for all $j\ge i$.       
\end{itemize}
The embedding $\mfS_L\hookrightarrow W(R)$
defined above extends to
$\mfS_L\hookrightarrow S_L\hookrightarrow A_{\mrm{cris}}$
and $S_L[1/p]\hookrightarrow B^+_{\mrm{cris}}$.
We take 
a primitive $p$-power root $\zeta_{p^n}$ of unity for $n\ge 0$
such that $\zeta^p_{p^{n+1}}=\zeta_{p^n}$.
We set 
$\underline{\e}:=(\zeta_{p^n})_{n\ge 0}\in R$
and $t:=-\mrm{log}([\underline{\e}])\in A_{\mrm{cris}}$.
For any integer $n\ge 0$,
let $t^{\{n\}}:=t^{r(n)}\gamma_{\tilde{q}(n)}(\frac{t^{p-1}}{p})$ 
where $n=(p-1)\tilde{q}(n)+r(n)$ with $\tilde{q}(n)\ge 0,\ 0\le r(n) <p-1$
and $\gamma_i(x)=\frac{x^i}{i!}$ 
the standard divided power.
Now we denote by $\nu\colon W(R)\to W(\overline{k})$ 
a unique lift of the projection $R\to \overline{k}$,
which extends to a map 
$\nu \colon B^+_{\mrm{cris}}\to W(\overline{k})[1/p]$.
For any subring $A\subset B^+_{\mrm{cris}}$,
we put 
$I_+A=\mrm{Ker}(\nu\ \mrm{on}\  B^+_{\mrm{cris}})\cap A$.

We define a subring $\mcal{R}_L$, containing $S_L$,
of $B^+_{\mrm{cris}}$ 
as below:
$$
\mcal{R}_L:=\left\{\sum^{\infty}_{i=0} f_it^{\{i\}}\mid f_i\in S_L[1/p]\
\mrm{and}\ f_i\to 0\ \mrm{as}\ i\to \infty\right\}.
$$
Furthermore, we define $\whR_L:=\mcal{R}_L\cap W(R)$.
We see that $S_L$ is not $G_L$-stable under the action of $G_L$ in $B^+_{\mrm{cris}}$.
However, $\mcal{R}_L, \whR_L, I_+\mcal{R}_L$ and $I_+\whR_L$ are $G_L$-stable.
Furthermore, they 
are stable under Frobenius in $B^+_{\mrm{cris}}$.
By definition $G_L$-actions on them
factor through $\hat{G}_L$.

For an object $\mfM$ of  $\mrm{Mod}^r_{/\mfS_L}$,
the map $\mfM\to \whR_L\otimes_{\vphi,\mfS_L} \mfM$
defined by $x\mapsto 1\otimes x$
is injective.
By this injection,
we often regard $\mfM$ as a $\vphi(\mfS_L)$-stable submodule of 
$\whR_L\otimes_{\vphi,\mfS_L} \mfM$.
\begin{definition}
\label{def:Liu}
A {\it weak $(\vphi,\hat{G}_L)$-module of height $\le r$ over $\mfS_L$}
is a triple $\hat{\mfM}=(\mfM,\vphi,\hat{G}_L)$ where
\begin{itemize}
\item[(1)] $(\mfM,\vphi)$ is an object of  $\mrm{Mod}^r_{/\mfS_L}$, 
\item[(2)] $\hat{G}_L$ is an $\whR_L$-semilinear continuous $\hat{G}_L$-action on 
           $\whR_L\otimes_{\vphi,\mfS_L}\mfM$,
\item[(3)] the $\hat{G}_L$-action commutes with $\vphi_{\whR_L}\otimes \vphi_{\mfM}$, and
\item[(4)] $\mfM\subset (\whR_L\otimes_{\vphi,\mfS_L} \mfM)^{H_L}$.
\end{itemize}
Furthermore, we say that $\hat{\mfM}$ is 
a {\it $(\vphi,\hat{G}_L)$-module of height $\le r$ over $\mfS_L$}
if $\hat{\mfM}$ satisfies the additional condition;
\begin{itemize}
\item[(5)] $\hat{G}_L$ acts on $\whR_L\otimes_{\vphi,\mfS_L} \mfM/I_+\whR_L(\whR_L\otimes_{\vphi,\mfS_L} \mfM)$  
           trivially.
\end{itemize}
We always regard $\whR_L\otimes_{\vphi,\mfS_L} \mfM$ as a 
$G_L$-module  via the projection $G_L\twoheadrightarrow \hat{G}_L$.
We denote by ${}_{\mrm{w}}\Mod^{r,\hat{G}_L}_{/\mfS_L}$ 
(resp.\ $\Mod^{r,\hat{G}_L}_{/\mfS_L}$)
the category of 
weak $(\vphi,\hat{G}_L)$-modules of height $\le r$ over $\mfS_L$
(resp.\ the category of  $(\vphi,\hat{G}_L)$-modules of height $\le r$ over $\mfS_L$).
\end{definition}

\noindent
We define a contravariant functor 
$\hat{T}_L\colon {}_{\mrm{w}}\Mod^{r,\hat{G}_L}_{/\mfS_L}\to 
\mrm{Rep}_{\mbb{Z}_p}(G_L)$
by
$$
\hat{T}_L(\hat{\mfM})=
\mrm{Hom}_{\whR_L,\vphi}(\whR_L\otimes_{\vphi, \mfS_L} \mfM, W(R)) 
$$
for an object $\hat{\mfM}=(\mfM,\vphi,\hat{G}_L)$ of ${}_{\mrm{w}}\Mod^{r,\hat{G}_L}_{/\mfS_L}$.
Here a $G_L$-action on 
$\hat{T}_L(\hat{\mfM})$ is given by 
$(\sigma.g)(x)=\sigma(g(\sigma^{-1}x))$ 
for $\sigma\in G_L, g\in \hat{T}_L(\hat{\mfM}), 
x\in \whR_L\otimes_{\vphi, \mfS_L} \mfM$.

\begin{remark}
\label{importantremark}
We should remark that 
notations $L_n,\mfS_L, \whR_L, \Mod^{r,\hat{G}_L}_{/\mfS_L},\dots $ above
{\it depend on the choices of a uniformizer $\pi_L$ of $L$ and a system $(\pi_{L,n})_{n\ge 0}$
of $p$-power roots of $\pi_L$}.
Conversely,
if we fix the choice of $\pi_L$ and $(\pi_{L,n})_{n\ge 0}$,
such notations are uniquely determined.
\end{remark}

\begin{theorem}
\label{Thm:Liu}
(1) (\cite[Theorem 2.3.1 (1)]{Li3})
Let $\hat{\mfM}=(\mfM,\vphi,\hat{G}_L)$ be an object of 
${}_{\mrm{w}}\Mod^{r,\hat{G}_L}_{/\mfS_L}$.
Then the map
$$
\theta\colon T_{\mfS_L}(\mfM) \to \hat{T}_L(\hat{\mfM})
$$
defined by $\theta(f)(a\otimes x):=a\vphi(f(x))$ 
for $a\in \whR_L$ and $x\in \mfM$,
is an isomorphism of representations of $G_{L_\infty}$.

\noindent
(2) (\cite[Theorem 2.3.1(2)]{Li3})
The contravariant functor $\hat{T}_L$ gives an anti-equivalence 
between the following categories:
\begin{itemize}
\item[--] The category of $(\vphi,\hat{G}_L)$-modules of height $\le r$ over $\mfS_L$.
\item[--] The category of $G_L$-stable $\mbb{Z}_p$-lattices in semi-stable 
$\mbb{Q}_p$-representations with 
Hodge-Tate weights in $[0,r]$.
\end{itemize}

\noindent
(3) (\cite[Theorem 4.2.2]{Li3})
The contravariant functor 
$\hat{T}_L\colon {}_{\mrm{w}}\Mod^{r,\hat{G}_L}_{/\mfS_L}\to 
\mrm{Rep}_{\mbb{Z}_p}(G_L)$
is fully faithful.
Furthermore, its essential image is 
contained in the category 
of $G_L$-stable $\mbb{Z}_p$-lattices 
in potentially semi-stable $\mbb{Q}_p$-representations of $G_L$
which are semi-stable over $L_n$ for some $n\ge 0$
and have Hodge-Tate weights in $[0,r]$.
\end{theorem}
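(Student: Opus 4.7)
The plan is to treat the three parts in order, with part (1) serving as a bridge between the $G_{L_\infty}$-theory (where the Kisin-module formalism lives) and the $G_L$-theory.

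For part (1), I would construct an explicit inverse to $\theta$. Given $g\colon\whR_L\otimes_{\vphi,\mfS_L}\mfM\to W(R)$ which is $\whR_L$-linear and $\vphi$-equivariant, set $f(x):=\vphi^{-1}(g(1\otimes x))$; this is well-defined because $\vphi$ is bijective on $W(R)$ (as $R$ is perfect). Using the identity $\vphi(s)\otimes x = 1\otimes sx$ in the tensor product together with the $\whR_L$-linearity of $g$, one verifies that $f$ is $\mfS_L$-linear; its $\vphi$-equivariance then follows from that of $g$ via the relation $1\otimes\vphi_{\mfM}(x)=\vphi(1\otimes x)$. The constructions $f\mapsto\theta(f)$ and $g\mapsto f$ are mutually inverse, and they intertwine the $G_{L_\infty}$-actions since $G_{L_\infty}$ fixes $\vphi(\mfS_L)\subset W(R)$ pointwise and commutes with $\vphi$.

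For part (2), the non-trivial direction is to produce a $(\vphi,\hat{G}_L)$-module from a $G_L$-stable lattice $T$ in a semi-stable representation $V$ with Hodge--Tate weights in $[0,r]$. Applying Proposition \ref{Kisin} to $T|_{G_{L_\infty}}$ yields the underlying Kisin module $\mfM$. The evaluation pairing extends to a $G_L$-equivariant embedding of $\whR_L\otimes_{\vphi,\mfS_L}\mfM$ into $W(R)\otimes_{\mbb{Z}_p}T^\vee$, producing the desired $\hat{G}_L$-action, and condition~(5) is verified by reducing modulo $I_+\whR_L$ and comparing with the relevant piece of $D_{\mrm{st},L}(V)$, on which $H_L$ acts trivially. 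Conversely, from $\hat{\mfM}$ one extracts a filtered $(\vphi,N)$-module using the $S_L$-structure inside $\whR_L$, the height-$\le r$ filtration, and the derivation $N$ on $S_L$, thereby realising $\hat{T}_L(\hat{\mfM})\otimes\mbb{Q}_p$ as a semi-stable $G_L$-representation with Hodge--Tate weights in $[0,r]$.

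For part (3), full faithfulness is formal: a $G_L$-equivariant map $\hat{T}_L(\hat{\mfN})\to\hat{T}_L(\hat{\mfM})$ restricts to a $G_{L_\infty}$-equivariant map which, via part (1) and Proposition \ref{Kisin}, arises from a unique morphism $\mfM\to\mfN$ of Kisin modules; this morphism is automatically $\hat{G}_L$-equivariant after base change because the original was $G_L$-equivariant. For the essential image, the plan is to show that the residual $\hat{G}_L$-action on the finite-rank quotient $(\whR_L\otimes_{\vphi,\mfS_L}\mfM)/I_+\whR_L$ factors through a finite quotient of $\hat{G}_L$; passing to a suitable $L_n$ kills this residual action, and from the resulting data one should be able to manufacture a $(\vphi,\hat{G}_{L_n})$-module over $\mfS_{L_n}$ to which part~(2) applies, yielding semi-stability over $L_n$. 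I expect the main obstacle to be precisely this transfer between the $\mfS_L$- and $\mfS_{L_n}$-worlds: the whole formalism depends on the choice of uniformizer system (see Remark \ref{importantremark}), so one must carefully track how the Kisin module, the $\hat{G}_L$-action, and the verification of condition~(5) behave under base change to a larger $L_n$.
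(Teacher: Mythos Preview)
The paper does not give its own proof of this theorem: it is quoted from \cite{Li3} (parts (1) and (2) from Theorem~2.3.1 there, part (3) from Theorem~4.2.2), so there is no in-paper argument to compare against directly. That said, the paper's own later arguments (notably Section~\ref{3.4}) invoke and reuse pieces of Liu's proof of part (3), so one can infer how Liu proceeds.

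Your outline for (1) is correct and is essentially the standard argument. Your sketch for (2) is in the right spirit but skates over the genuine work: the step ``the evaluation pairing extends to a $G_L$-equivariant embedding of $\whR_L\otimes_{\vphi,\mfS_L}\mfM$ into $W(R)\otimes_{\mbb{Z}_p}T^\vee$'' hides the real content, namely showing that the image of $\whR_L\otimes_{\vphi,\mfS_L}\mfM$ inside $W(R)\otimes T^\vee$ is $G_L$-stable (not merely $G_{L_\infty}$-stable) and that the induced action is $\whR_L$-semilinear. Liu does this via the Breuil-module structure on $S_L[1/p]\otimes_{\vphi,\mfS_L}\mfM$ and the $\vphi$-compatible section $s\colon D\hookrightarrow\mcal{D}$; the paper reproduces a version of this computation around equation~(\ref{action}).

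For the essential-image half of (3), your plan diverges from Liu's, and the obstacle you flag is real. Liu does \emph{not} base-change to $\mfS_{L_n}$ and manufacture a $(\vphi,\hat{G}_{L_n})$-module. Instead (see the paper's Section~\ref{3.4}, which reuses Liu's argument) he writes the $\tau$-action on $s(D)$ as a matrix $A(t)\in GL_d(W(k_L)[1/p][\![t]\!])$, uses the relation $A(\chi(g)t)=A(t)^{\chi(g)}$ coming from $g\tau=\tau^{\chi(g)}g$ to force $A(0)$ to have $p$-power order, and then shows directly that if $A(0)^{p^\ell}=I_d$ the associated filtered $(\vphi,N)$-module is weakly admissible over $L_\ell$. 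Your proposed route---pass to $\mfS_{L_n}$ and verify condition~(5) there---would require controlling the compatibility between $\whR_L$ and $\whR_{L_n}$ and checking that the extended semilinear action really lands in $\whR_{L_n}\otimes_{\vphi,\mfS_{L_n}}\mfM_n$ with trivial residual action; this is not obviously easier than Liu's direct construction, and you have not indicated how to carry it out.
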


\begin{remark}
\label{Rem:Liu}
Put $m=\mrm{max}\{i\ge 0 ; \zeta_{p^i}\in L^{\mrm{ur}}\}$.
We claim that any $\mbb{Q}_p$-representation of $G_L$ which is 
semi-stable over $L_n$ for some $n\ge 0$
is always semi-stable over $L_m$.

In the former half part of the proof of \cite[Theorem 4.2.2]{Li3},
a proof of this claim 
with ``$m=\mrm{max}\{i\ge 0 ; \zeta_{p^i}\in L\}$'' is written.
Unfortunately, there is a gap in the proof.
In the proof,
the assumption that the extension 
$L(\zeta_n,\pi_{L,n})/L$
is totally ramified is implicitly used
(p.\ $133$,\ between $l.\ 14$ and $l.\ 21$ of \cite{Li3}).
However, this condition is not satisfied in general.
So we need a little modification.
Put $m=\mrm{max}\{i\ge 0 ; \zeta_{p^i}\in L^{\mrm{ur}}\}$ as the beginning.
Denote by $\widehat{L^{\mrm{ur}}}$ the completion of 
$L^{\mrm{ur}}$.
We remark that the completion of 
the maximal unramified extension of $L_n$ is just
$\widehat{L^{\mrm{ur}}}(\pi_{L,n})$.
Let $V$ be a $\mbb{Q}_p$-representation of $G_L$ which is 
semi-stable over $L_n$ for some $n\ge 0$.
Then $V$ is semi-stable over  $\widehat{L^{\mrm{ur}}}(\pi_{L,n})$.
We remark that the proof of \cite[Theorem 4.2.2]{Li3} exactly holds 
at least under the assumption that the residue field of 
the base field is algebraically closed. (We need only the first paragraph of {\it loc.\ cit.} here.)
Thus we know that $V$ is semi-stable over $\widehat{L^{\mrm{ur}}}(\pi_{L,m})$
and thus we obtain the claim.
\end{remark}

Now we restate Theorem \ref{Main1} with the above setting of notation
and give a  further result.
Fix the choice of a uniformizer $\pi_K$ of $K$ 
and a system $(\pi_{K,n})_{n\ge 0}$ of $p$-power roots of $\pi_K$,
and define notations $K_n, \Mod^{r,\hat{G}_K}_{/\mfS_K},\dots $
with respect to them.
Recall that $m_0$ (resp.\ $m$) is the maximum integer 
such that $K$  
(resp.\ $K^{\mrm{ur}}$)
contains $p^{m_0}$-th (resp.\ $p^m$-th) roots of unity.
We note that the inequality  $m_0\le m$ always holds.
For any non-negative integer $n$,
we denote by $\mcal{C}^r_n$ 
the category  of free $\mbb{Z}_p$-representations $T$ of $G_K$
which satisfy the following property;
there exists a semi-stable $\mbb{Q}_p$-representation $V$ of $G_K$
with Hodge-Tate weights in $[0,r]$
such that 
$T\otimes_{\mbb{Z}_p}\mbb{Q}_p$ is isomorphic to $V$ as representations of $G_{K_n}$.

Our goal in this paper is to show the following:

\begin{theorem}
\label{Main1'}
The essential image of the functor 
$\hat{T}_K\colon {}_{\mrm{w}}\Mod^{r,\hat{G}_K}_{/\mfS_K}\to \mrm{Rep}_{\mbb{Z}_p}(G_K)$
is $\mcal{C}^r_{m_0}$.
\end{theorem}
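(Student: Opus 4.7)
The plan is to prove the theorem by verifying $\mcal{C}^r=\mcal{C}^r_{m_0}$, where $\mcal{C}^r$ denotes the essential image of $\hat{T}_K$. Following the introductory outline, I would split this into the chain of inclusions $\mcal{C}^r_{m_0}\subset \mcal{C}^r\subset \mcal{C}^r_m$, where $m\ge m_0$ is the largest integer such that $K^{\mrm{ur}}$ contains $p^m$-th roots of unity, and then close the gap by showing that an object of $\mcal{C}^r$ landing in $\mcal{C}^r_m$ in fact already lies in $\mcal{C}^r_{m_0}$. The two intermediate inclusions would rely on the two announced key tools: Proposition \ref{Main2}, relating weak $(\vphi,\hat{G}_K)$-modules to finite height lattices inside potentially semi-stable representations, and Proposition \ref{Thm2}, promoting a finite height representation semi-stable over $K_n$ to one semi-stable over $K$.

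For $\mcal{C}^r_{m_0}\subset\mcal{C}^r$, fix $T\in\mcal{C}^r_{m_0}$ together with a semi-stable $G_K$-representation $V$ of Hodge--Tate weights in $[0,r]$ and a $G_{K_{m_0}}$-equivariant isomorphism $\iota\colon T\otimes_{\mbb{Z}_p}\mbb{Q}_p\simeq V$. Via $\iota$, the original $G_K$-stable lattice $T$ becomes in particular a $G_{K_\infty}$-stable lattice in $V$, so Proposition \ref{Kisin} produces a Kisin module $\mfM$ of height $\le r$ with $T|_{G_{K_\infty}}\simeq T_{\mfS_K}(\mfM)$. The remaining task is to encode the \emph{original} $G_K$-action on $T$ (which agrees with that of $V$ only after restriction to $G_{K_{m_0}}$) as a semilinear $\hat{G}_K$-action on $\whR_K\otimes_{\vphi,\mfS_K}\mfM$ satisfying axioms (1)--(4) of Definition \ref{def:Liu}; axiom (5), which would force genuine semi-stability over $K$, is precisely what may fail in this situation. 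Proposition \ref{Main2} should supply this construction and identify $\hat{T}_K(\hat{\mfM})$ with $T$.

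For $\mcal{C}^r\subset\mcal{C}^r_m$, take $T=\hat{T}_K(\hat{\mfM})$. By Theorem \ref{Thm:Liu}(3) combined with Remark \ref{Rem:Liu}, $T\otimes_{\mbb{Z}_p}\mbb{Q}_p$ is semi-stable as a $G_{K_m}$-representation and, when restricted to $G_{K_\infty}$, carries the finite height Kisin module $\mfM$. Applying Proposition \ref{Thm2} with $n=m$ then yields a semi-stable $G_K$-representation $V$ whose restriction to $G_{K_m}$ is isomorphic to $(T\otimes\mbb{Q}_p)|_{G_{K_m}}$, giving $T\in\mcal{C}^r_m$. The main obstacle is the final descent from $\mcal{C}^r_m$ down to $\mcal{C}^r_{m_0}$: one must replace $V$ by a (possibly different) semi-stable representation for which the isomorphism becomes $G_{K_{m_0}}$-equivariant, even though $K_m/K_{m_0}$ is built by adjoining unramified $p$-power roots of unity and $\mathrm{Gal}(K_m/K_{m_0})$ need not act trivially. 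I expect this descent to be the crux of the argument, to be handled by an arithmetic twist construction of the type carried out in Lemma \ref{lastlemma} (the Tamagawa-assisted input acknowledged in the introduction, particularly subtle when $p$ is odd and $m_0=0$), producing a character twist that preserves semi-stability while trivialising the discrepancy.
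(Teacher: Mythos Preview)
Your overall architecture matches the paper's: establish $\mcal{C}^r_{m_0}\subset\mcal{C}^r\subset\mcal{C}^r_m$ via Propositions~\ref{Main2} and~\ref{Thm2}, then close the gap. The first two inclusions are essentially as you describe (a minor point: the existence of the Kisin module $\mfM$ for a $G_{K_\infty}$-lattice in a semi-stable representation is Kisin's Lemma~2.1.15, not Proposition~\ref{Kisin}, which only gives full faithfulness).

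Your account of the final descent, however, contains a misconception and misses the actual mechanism. First, $K_n=K(\pi_{K,n})$ is obtained by adjoining $p^n$-th roots of the \emph{uniformizer}, so $K_m/K_{m_0}$ is a \emph{totally ramified} extension inside the Kummer tower, not one ``built by adjoining unramified $p$-power roots of unity''. Second, and more importantly, the paper does \emph{not} replace $V$ by any character twist. Instead one argues directly from the weak $(\vphi,\hat{G}_K)$-module $\hat{\mfM}$: after checking $K_{p^\infty}\cap K_\infty=K$ (so that $\hat{G}_K=G_{p^\infty}\rtimes H_K$), a topological generator $\tau$ of $G_{p^\infty}$ acts on a $\vphi$-compatible basis of $s(D)$ via a matrix $A(t)\in GL_d(W(k)[1/p][\![t]\!])$, and by Liu's criterion $T\otimes\mbb{Q}_p$ is semi-stable over $K_\ell$ once $A(0)^{p^\ell}=I_d$. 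The commutation $g\tau=\tau^{\chi(g)}g$ for $g\in H_K$ yields $A(0)^{\chi(g)-1}=I_d$; Lemma~\ref{lastlemma} is then the elementary arithmetic of the image $\chi(H_K)\subset\mbb{Z}_p^\times$ showing that these relations force $A(0)^{p^{m_0}}=I_d$ (the case $p$ odd, $m_0=0$ uses the prime-to-$p$ part of $\chi(H_K)$ to get $A(0)=I_d$ outright). Thus $T\otimes\mbb{Q}_p$ is \emph{already} semi-stable over $K_{m_0}$. No new $V$ is produced: the same $V$ coming from $\mcal{C}^r\subset\mcal{C}^r_m$ agrees with $T\otimes\mbb{Q}_p$ over $G_{K_m}$, and since both are now semi-stable over $K_{m_0}$, Proposition~\ref{totst} (full faithfulness along the totally ramified extension $K_m/K_{m_0}$) upgrades this to a $G_{K_{m_0}}$-isomorphism.
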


As an immediate  consequence of the above theorem, we obtain
\begin{corollary}
The functor $\hat{T}_K$ induces an anti-equivalence 
${}_{\mrm{w}}\Mod^{r,\hat{G}_K}_{/\mfS_K}\overset{\sim}{\to} \mcal{C}^r_{m_0}$.
\end{corollary}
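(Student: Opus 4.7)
The theorem is equivalent to the equality $\mcal{C}^r=\mcal{C}^r_{m_0}$, where $\mcal{C}^r$ denotes the essential image of $\hat{T}_K$; the corollary then follows at once from the fully faithfulness in Theorem~\ref{Thm:Liu}(3). I would split the proof into the two inclusions and rely on the two preparatory results announced in the introduction: Proposition~\ref{Main2}, which extends Liu's $(\vphi,\hat{G})$-module classification from semi-stable to potentially semi-stable lattices, and Proposition~\ref{Thm2}, which upgrades a finite height $G_K$-representation that is semi-stable after a Kummer base change to a semi-stable $G_K$-action over $K$ itself.

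For the inclusion $\mcal{C}^r\subset\mcal{C}^r_{m_0}$, start from $T=\hat{T}_K(\hat{\mfM})$ for a weak $(\vphi,\hat{G}_K)$-module $\hat{\mfM}=(\mfM,\vphi,\hat{G}_K)$ of height $\le r$. Theorem~\ref{Thm:Liu}(3) combined with Remark~\ref{Rem:Liu} gives immediately that $V:=T\otimes_{\mbb{Z}_p}\mbb{Q}_p$ is semi-stable over $K_m$ with Hodge--Tate weights in $[0,r]$, so $T\in\mcal{C}^r_m$. To descend the level of semi-stability from $K_m$ to $K_{m_0}$, I would invoke Proposition~\ref{Thm2} with $n=m$: since $T$ is of finite height with underlying Kisin module $(\mfM,\vphi)$, the proposition produces a second $G_K$-action on $T$ that agrees with the original one on $G_{K_m}$ and is semi-stable over $K$. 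A twisting argument exploiting the Galois extension $K_{m_0}/K$, which is cyclic of order $p^{m_0}$ because $\zeta_{p^{m_0}}\in K$ by the definition of $m_0$, then modifies this companion by an appropriate character of $\mrm{Gal}(K_{m_0}/K)$ to produce a semi-stable $V'\in\mrm{Rep}_{\mbb{Q}_p}(G_K)$ with $V'|_{G_{K_{m_0}}}\cong V|_{G_{K_{m_0}}}$, witnessing $T\in\mcal{C}^r_{m_0}$.

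For the reverse inclusion $\mcal{C}^r_{m_0}\subset\mcal{C}^r$, start from $T\in\mcal{C}^r_{m_0}$ and a semi-stable $V\in\mrm{Rep}_{\mbb{Q}_p}(G_K)$ with Hodge--Tate weights in $[0,r]$ realizing $T|_{G_{K_{m_0}}}[1/p]\cong V|_{G_{K_{m_0}}}$. Theorem~\ref{Thm:Liu}(2) applied over $K$ attaches to $V$ a Kisin module $\mfM_V$ over $\mfS_K$ of height $\le r$. Since $K_{m_0}\subset K_\infty$, i.e.\ $G_{K_\infty}\subset G_{K_{m_0}}$, the given isomorphism upgrades to $T|_{G_{K_\infty}}[1/p]\cong V|_{G_{K_\infty}}$, and Proposition~\ref{Kisin} extracts a Kisin submodule $\mfM\subset\mfM_V$ of height $\le r$ realizing the lattice $T|_{G_{K_\infty}}$. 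Proposition~\ref{Main2} then furnishes an $\whR_K$-semilinear $\hat{G}_K$-action on $\whR_K\otimes_{\vphi,\mfS_K}\mfM$ tailored to the genuine $G_K$-action on $T$ (and not merely to the semi-stable companion $V$), assembling $\mfM$ into a weak $(\vphi,\hat{G}_K)$-module $\hat{\mfM}$. A final check via Theorem~\ref{Thm:Liu}(1) then confirms $\hat{T}_K(\hat{\mfM})\cong T$ as $G_K$-representations.

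The main obstacle will be the construction of the $\hat{G}_K$-action in the previous paragraph. The delicate point is that $T$ and $V$ differ in general as $G_K$-representations, so one cannot simply transport the natural $\hat{G}_K$-action associated to $V$ by Theorem~\ref{Thm:Liu}(2); the action has to be built directly from the given potentially semi-stable lattice $T$, following Liu's strategy in \cite{Li3}, \cite{Li4} but generalized to the potentially semi-stable framework as encapsulated in Proposition~\ref{Main2}. This entails analysing $\vphi$-stable $\whR_K$-lattices inside $\whR_K\otimes_{\vphi,\mfS_K}\mfM_V$ and pinning down the $\hat{G}_K$-action on suitable topological generators of $\hat{G}_K/H_K$ by formulas which are sensitive both to the chosen system $(\pi_{K,n})_{n\ge 0}$ of $p$-power roots of $\pi_K$ and to the fact that $K$ already contains $\zeta_{p^{m_0}}$.
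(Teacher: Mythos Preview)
Your treatment of the inclusion $\mcal{C}^r_{m_0}\subset\mcal{C}^r$ is essentially the paper's approach via Proposition~\ref{Main2}, and your reduction of the other direction to $\mcal{C}^r\subset\mcal{C}^r_m$ via Proposition~\ref{Thm2} is also correct. The gap is in the final descent from $\mcal{C}^r_m$ to $\mcal{C}^r_{m_0}$. Your proposed ``twisting argument'' does not close it: twisting $V$ by a character of $\mrm{Gal}(K_{m_0}/K)$ indeed gives $V'$ with $V'|_{G_{K_{m_0}}}\cong V|_{G_{K_{m_0}}}$, but that is not what is needed. You must produce $V'|_{G_{K_{m_0}}}\cong T[1/p]|_{G_{K_{m_0}}}$, and at this stage you only know $V$ and $T[1/p]$ agree over the smaller group $G_{K_m}$. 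A character of $\mrm{Gal}(K_{m_0}/K)$ is trivial on $G_{K_{m_0}}$ and hence cannot adjust anything there; and even if you twisted by something factoring through $\mrm{Gal}(K_m/K_{m_0})$, there is no reason the discrepancy between two higher-rank representations over $G_{K_{m_0}}$ should be realized by a scalar twist.

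The paper's argument for this step (Section~\ref{3.4}) is of a completely different nature. One shows \emph{directly} that $T\otimes_{\mbb{Z}_p}\mbb{Q}_p$ is semi-stable over $K_{m_0}$ by analysing the $\hat{G}_K$-action on $\mcal{R}_K\otimes_{\vphi,\mfS_K}\mfM$. Using the decomposition $\hat{G}_K=G_{p^\infty}\rtimes H_K$ (available once one excludes a trivial boundary case), the action of a topological generator $\tau$ of $G_{p^\infty}$ on a $\vphi$-compatible basis is encoded by a matrix $A(t)\in GL_d(W(k)[1/p][\![t]\!])$, and the commutation relation $g\tau=\tau^{\chi(g)}g$ forces $A(0)^{\chi(g)-1}=I_d$ for all $g\in H_K$. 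A short case analysis of the image $\chi(H_K)\subset\mbb{Z}_p^\times$ (Lemma~\ref{lastlemma}) then yields $A(0)^{p^{m_0}}=I_d$, which by Liu's criterion implies semi-stability over $K_{m_0}$. Only then does one combine this with the companion $V$ (now via Proposition~\ref{totst}, since both $V$ and $T[1/p]$ are semi-stable over $K_{m_0}$ and $K_m/K_{m_0}$ is totally ramified) to conclude $T\in\mcal{C}^r_{m_0}$.
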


For later use,
we end this section by describing the following proposition.

\begin{proposition}
\label{totst}
Let $L$ be a finite totally ramified extension of $K$.
Then the restriction functor from the category of 
semi-stable $\mbb{Q}_p$-representations of $G_K$
into the category of 
semi-stable $\mbb{Q}_p$-representations of $G_L$ 
is fully faithful.
\end{proposition}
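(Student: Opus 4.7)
The plan is to pass through the classical contravariant fully faithful functor $D_{\mrm{st},K}$ from the category of semi-stable $\mbb{Q}_p$-representations of $G_K$ to the category of admissible filtered $(\vphi,N)$-modules over $K$. For semi-stable $V,W$, elements of $\mrm{Hom}_{G_K}(V,W)$ correspond to $(\vphi,N)$-linear maps between the associated modules preserving the $K$-filtration, and analogously $\mrm{Hom}_{G_L}(V,W)$ corresponds to such maps preserving the $L$-filtration. Since a $G_K$-equivariant map is certainly $G_L$-equivariant, it suffices to establish the reverse implication at the level of this linear-algebraic data.

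Since $L/K$ is totally ramified, the maximal unramified subextensions of $\mbb{Q}_p$ contained in $K$ and $L$ coincide; denote this common field by $K_0=L_0$. For any semi-stable representation $V$ of $G_K$, the restriction $V|_{G_L}$ is also semi-stable, and the natural inclusion $D_{\mrm{st},K}(V)\hookrightarrow D_{\mrm{st},L}(V|_{G_L})$ is an equality of $K_0$-vector spaces since both sides have dimension $\dim_{\mbb{Q}_p}V$. Hence the underlying $(\vphi,N)$-modules over $K_0$ are canonically identified; denote this common object by $D$. The only difference lies in the filtration: the $K$-side carries one on $K\otimes_{K_0}D$, whereas the $L$-side carries one on $L\otimes_{K_0}D = L\otimes_K(K\otimes_{K_0}D)$.

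Using the canonical filtered $G_K$-equivariant isomorphism $B_{\mrm{dR}}\otimes_{\mbb{Q}_p}V \cong B_{\mrm{dR}}\otimes_K(K\otimes_{K_0}D)$ (valid because $V$ is de Rham over $K$) together with $B_{\mrm{dR}}^{G_L}=L$, I would verify directly that $\mrm{Fil}^i(L\otimes_{K_0}D)=L\otimes_K \mrm{Fil}^i(K\otimes_{K_0}D)$; that is, the $L$-filtration is the extension of scalars of the $K$-filtration.

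It then remains to invoke the elementary fact that for finite-dimensional $K$-vector spaces $E,E'$ with $K$-subspaces $U\subset E$ and $U'\subset E'$ and a $K$-linear map $g\colon E\to E'$, the inclusion $(1\otimes g)(L\otimes_K U)\subset L\otimes_K U'$ forces $g(U)\subset U'$ (by faithful flatness of $L/K$, since $E'\cap(L\otimes_K U')=U'$ inside $L\otimes_K E'$). Applied piece by piece to the filtration on $D$, this shows that a $(\vphi,N)$-linear map of underlying modules preserving the $L$-filtration automatically preserves the $K$-filtration, which is exactly what is needed. The argument is entirely formal; the only point requiring care is the base-change compatibility of the filtration, a standard feature of de Rham representations.
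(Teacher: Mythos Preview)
Your proposal is correct and follows essentially the same approach as the paper: the paper's proof is a one-sentence pointer (``In view of the theory of Fontaine's filtered $(\varphi,N)$-modules, the result immediately follows from calculations of elementary linear algebras''), and you have spelled out precisely those calculations. The key points you identify---that $K_0=L_0$ since $L/K$ is totally ramified, that the underlying $(\varphi,N)$-modules therefore coincide, that the $L$-filtration is the base change of the $K$-filtration, and that preservation of the $L$-filtration descends to the $K$-filtration by faithful flatness---are exactly the elementary linear-algebra facts the paper has in mind.
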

\begin{proof}
In view of the theory of Fontaine's filtered $(\vphi, N)$-modules, 
the result immediately follows from calculations of elementary 
linear algebras.
\end{proof}


\section{Proof of Main Theorem}

Our main goal in this section is to give a proof of Theorem \ref{Main1'}.
In the first three subsections,
we prove the following lemma, which plays an important role 
in our proof. 

\begin{lemma}
\label{Lem:Main1'}
Denote by $\mcal{C}^r$
the essential image of 
$\hat{T}_K\colon {}_{\mrm{w}}\Mod^{r,\hat{G}_K}_{/\mfS_K}\to \mrm{Rep}_{\mbb{Z}_p}(G_K)$.
Then we have $\mcal{C}^r_{m_0}\subset \mcal{C}^r \subset \mcal{C}^r_m$.
\end{lemma}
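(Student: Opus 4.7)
The plan is to prove the two inclusions separately, combining Theorem \ref{Thm:Liu}, Remark \ref{Rem:Liu}, and the two key propositions announced in the introduction (Propositions \ref{Main2} and \ref{Thm2}).

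For $\mcal{C}^r\subset\mcal{C}^r_m$, take $T\in\mcal{C}^r$, so $T\cong\hat{T}_K(\hat{\mfM})$ for some weak $(\vphi,\hat{G}_K)$-module $\hat{\mfM}=(\mfM,\vphi,\hat{G}_K)$ of height $\le r$. Theorem \ref{Thm:Liu}(1) identifies $T$ with $T_{\mfS_K}(\mfM)$ as $G_{K_\infty}$-representations, so $T$ is of finite height $\le r$ as a $G_K$-representation; Theorem \ref{Thm:Liu}(3) together with the corrected statement of Remark \ref{Rem:Liu} shows that $T\otimes_{\mbb{Z}_p}\mbb{Q}_p$ is semi-stable over $K_m$. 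Applying Proposition \ref{Thm2} to the finite-height $T$ with its semi-stable restriction to $G_{K_m}$ produces a semi-stable $G_K$-representation $V$ coinciding with $T\otimes_{\mbb{Z}_p}\mbb{Q}_p$ upon restriction to $G_{K_m}$; taking $V$ as the witness yields $T\in\mcal{C}^r_m$.

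For $\mcal{C}^r_{m_0}\subset\mcal{C}^r$, take $T\in\mcal{C}^r_{m_0}$, so there exists a semi-stable $G_K$-representation $V_0$ with Hodge-Tate weights in $[0,r]$ and a $G_{K_{m_0}}$-equivariant isomorphism $T\otimes_{\mbb{Z}_p}\mbb{Q}_p\cong V_0$. Since $G_{K_\infty}\subset G_{K_{m_0}}$, this isomorphism identifies $T$ with a $G_{K_\infty}$-stable $\mbb{Z}_p$-lattice in $V_0$; by Kisin's theory every such lattice is of the form $T_{\mfS_K}(\mfM)$ for some $\mfM\in\Mod^r_{/\mfS_K}$, so $T$ is of finite height $\le r$ as a $G_K$-representation. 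Proposition \ref{Main2}, the finite-height-plus-potential-semi-stability criterion for the essential image of $\hat{T}_K$, then produces a weak $(\vphi,\hat{G}_K)$-module $\hat{\mfM}$ over $\mfS_K$ with $\hat{T}_K(\hat{\mfM})\cong T$, giving $T\in\mcal{C}^r$.

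The main obstacle lies in Proposition \ref{Main2}: given a finite-height $G_K$-representation $T$ with underlying Kisin module $\mfM$ and with $T\otimes_{\mbb{Z}_p}\mbb{Q}_p$ potentially semi-stable, one must construct an $\whR_K$-semilinear $\hat{G}_K$-action on $\whR_K\otimes_{\vphi,\mfS_K}\mfM$ satisfying the four axioms of Definition \ref{def:Liu}. The $\hat{G}_K$-action itself is forced by transporting the $G_K$-action on $T$ through the identification of Theorem \ref{Thm:Liu}(1), but the invariance axiom $\mfM\subset(\whR_K\otimes_{\vphi,\mfS_K}\mfM)^{H_K}$ is particularly subtle. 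Its verification requires the new classification of lattices in potentially semi-stable representations developed in this paper, following Liu's strategy in \cite{Li3,Li4}.
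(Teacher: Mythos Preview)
Your proposal is correct and follows the paper's approach exactly: the paper reduces $\mcal{C}^r_{m_0}\subset\mcal{C}^r$ to Proposition~\ref{Main2} via \cite[Lemma~2.1.15]{Ki}, and derives $\mcal{C}^r\subset\mcal{C}^r_m$ from Theorem~\ref{Thm:Liu}(3), Remark~\ref{Rem:Liu}, and Proposition~\ref{Thm2} (through Corollary~\ref{Main3}), just as you do. One small remark on your closing commentary: in the paper's proof of Proposition~\ref{Main2} the delicate point is not axiom~(4) of Definition~\ref{def:Liu} (which is essentially automatic once the action is set up, since $H_K\subset G_{K_\infty}$), but rather showing that the $G_K$-action lands in $\mcal{R}_K\otimes_{\vphi,\mfS_K}\mfM$ so that one has an $\whR_K$-semilinear action on $\whR_K\otimes_{\vphi,\mfS_K}\mfM$ at all; this is where the $(\vphi,\hat{G}_{K_n},K)$-module machinery and the explicit formula~(\ref{action}) are used.
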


Clearly, Theorem \ref{Main1'} follows immediately from the lemma if $m_0=m$.
However, the condition $m_0=m$ is not always satisfied.
Before starting a main part of this section, we give some remarks about this condition.

\begin{proposition}
\label{m0=m}
(1) If $k$ is algebraically closed, then $m_0=m$.

\noindent
(2) If $K(\zeta_{p^{m_0+1}})/K$ is ramified, then $m_0=m$.

\noindent
(3) Suppose that $\zeta_p\in K$ (resp.\ $\zeta_4\in K$) if $p$ is odd (resp.\ $p=2$). 
    Then $\hat{K}$ is totally ramified over $K$ 
    if and only if $m_0=m$.
\end{proposition}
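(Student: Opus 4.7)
The plan is to treat the three parts in order, with parts (1) and (2) essentially one-line observations and part (3) the main content. For (1), if $k$ is algebraically closed then $K=K^{\mrm{ur}}$, so the conditions defining $m_0$ and $m$ are identical and the two invariants must agree. For (2), I would argue by contraposition: if $m\ge m_0+1$ then $\zeta_{p^{m_0+1}}\in K^{\mrm{ur}}$, so $K(\zeta_{p^{m_0+1}})\subseteq K^{\mrm{ur}}$ is unramified over $K$, contradicting the hypothesis.

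For (3), I first note that under the additional assumption ($\zeta_p\in K$ if $p$ is odd, $\zeta_4\in K$ if $p=2$), the extension $K(\zeta_{p^\infty})/K$ is a $\mbb{Z}_p$-extension and, by the very definition of $m$, its maximal unramified subextension is $K(\zeta_{p^m})$. One implication is then formal: since $K(\zeta_{p^m})\subseteq K^{\mrm{ur}}$ and $K(\zeta_{p^m})\subseteq K(\zeta_{p^\infty})\subseteq \hat{K}$, if $\hat{K}/K$ is totally ramified then $K(\zeta_{p^m})=K$, forcing $m\le m_0$ and hence $m_0=m$.

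For the converse I would aim at the more precise equality
$$
\hat{K}\cap K^{\mrm{ur}}=K(\zeta_{p^m}),
$$
which under $m_0=m$ immediately yields $\hat{K}\cap K^{\mrm{ur}}=K$. Writing $\hat{K}=K(\zeta_{p^\infty})\cdot K_\infty$ and studying the short exact sequence
$$
1\to\mrm{Gal}(\hat{K}/K(\zeta_{p^\infty}))\to\mrm{Gal}(\hat{K}/K)\to\mrm{Gal}(K(\zeta_{p^\infty})/K)\to 1,
$$
the claim reduces to showing that $\hat{K}/K(\zeta_{p^\infty})$ (the Kummer tower generated by the $\pi_K^{1/p^n}$) does not enlarge the residue field. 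Combined with the fact that the residue field of $K(\zeta_{p^\infty})$ coincides with that of $K(\zeta_{p^m})$, this identifies $K(\zeta_{p^m})$ as the maximal unramified subextension of $\hat{K}/K$.

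The principal obstacle is precisely the residual triviality of $\hat{K}/K(\zeta_{p^\infty})$: each finite step $K(\zeta_{p^n})(\pi_K^{1/p^n})/K(\zeta_{p^n})$ need not be Eisenstein, since $\pi_K$ is a uniformizer of $K$ but not of $K(\zeta_{p^n})$ once the cyclotomic tower has nontrivial ramification. I would handle this by a valuation computation, using the $\mbb{Z}_p$-structure of the cyclotomic tower (which our hypothesis guarantees) to pin down the ramification index $e(K(\zeta_{p^n})/K)$ and thereby control how the $p$-power roots of $\pi_K$ interact with it, in the same spirit as the residue-field considerations appearing in Remark~\ref{Rem:Liu}.
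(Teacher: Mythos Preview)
Your handling of (1), (2), and the forward implication in (3) is correct and matches the paper. For the converse of (3), however, there is a real gap.

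You correctly isolate the obstacle: one must show that $\hat K/K(\zeta_{p^\infty})$ has trivial residue extension. But the proposed ``valuation computation'' does not settle this. For $N\ge m+n$ the ramification index $e(K(\zeta_{p^N})/K)=p^{N-m}$ forces $p^n\mid v_{K(\zeta_{p^N})}(\pi_K)$, so in $K(\zeta_{p^N})^\times/(K(\zeta_{p^N})^\times)^{p^n}$ the class of $\pi_K$ is represented by a \emph{unit} $u$, and $K(\zeta_{p^N},\pi_{K,n})=K(\zeta_{p^N})(u^{1/p^n})$. Whether this Kummer extension is totally ramified depends on $u$ itself---over a local field of residue characteristic $p$ containing $\zeta_p$, a $p$-th root of a unit can perfectly well generate an unramified degree-$p$ extension---and valuations alone say nothing about it. Nothing in your outline controls this unit, so the argument stops exactly at the point you flagged as the principal obstacle.

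The paper avoids this difficulty by arguing from the other factor of the tower. Since $K(\pi_n)/K$ is already totally ramified, any failure of total ramification in $\hat K/K$ must occur inside $K(\zeta_{p^n},\pi_n)/K(\pi_n)$. Under the hypothesis this extension is cyclic, so its intermediate fields are exactly the $K(\zeta_{p^l},\pi_n)$, and the maximal unramified one is $K(\zeta_{p^{l_0}},\pi_n)$ for some $l_0$. The decisive input is the linear disjointness $K^{\mrm{ur}}(\pi_n)\cap K^{\mrm{ur}}(\zeta_{p^n})=K^{\mrm{ur}}$, which gives $\zeta_{p^{l_0}}\in K^{\mrm{ur}}$ and hence $l_0\le m$; nontriviality then forces $[K(\zeta_{p^m}):K]>1$, i.e.\ $m_0<m$. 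In short, the paper works over $K(\pi_n)$ rather than over $K(\zeta_{p^N})$, and thereby never confronts the unit problem that blocks your route.
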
 
\begin{proof}
The assertion (1) and (2) is clear.
We prove (3).
If $m_0 < m$, then $K(\zeta_{p^m})$ is a non-trivial unramified extension of $K$
and thus the extension $\hat{K}/K$ is not totally ramified.
Conversely, suppose that $\hat{K}/K$ is not totally ramified.
Then there exists an integer $n\ge 0$ such that 
$K(\zeta_{p^n},\pi_n)/K$ is not  totally ramified.
This implies so is $K(\zeta_{p^n},\pi_n)/K(\pi_n)$.
We may suppose $n\ge m$.
Since $\mrm{Gal}(K(\zeta_{p^n},\pi_n)/K(\pi_n))$
is isomorphic to $\mbb{Z}/p^{n-m_0}\mbb{Z}$ (here we need the assumption 
$\zeta_p\in K$ (resp.\ $\zeta_4\in K$) if $p$ is odd (resp.\ $p=2$)),
any subfield of $K(\zeta_{p^n},\pi_n)/K(\pi_n)$
is of the form $K(\zeta_{p^l},\pi_n)$ for $m_0\le l\le n$.
Thus there exists an integer $m_0\le l_0\le n$
such that 
$K^{\mrm{ur}}(\pi_n)\cap K(\zeta_{p^n},\pi_n)=K(\zeta_{p^{l_0}},\pi_n)$.
We have
$\zeta_{p^{l_0}}\in K^{\mrm{ur}}(\pi_n)\cap K^{\mrm{ur}}(\zeta_{p^n})$.
Since $\zeta_p\in K$ (resp.\ $\zeta_4\in K$) if $p$ is odd (resp.\ $p=2$), 
we have also  $K^{\mrm{ur}}(\pi_n)\cap K^{\mrm{ur}}(\zeta_{p^n})=K^{\mrm{ur}}$.
This implies $l_0\le m$. 
Since the residue field extension corresponding to  
$K(\zeta_{p^n},\pi_n)/K(\pi_n)$
is non-trivial,
the extension $K(\zeta_{p^{l_0}},\pi_n)/K(\pi_n)$
is non-trivial extension
and thus so is
$K(\zeta_{p^m},\pi_n)/K(\pi_n)$. 
This implies 
$1<[K(\zeta_{p^m},\pi_n):K(\pi_n)]=[K(\zeta_{p^m}):K]$
and hence $m_0<m$.
\end{proof}

\begin{remark}
\label{rem:m0<m}
The condition $m_0=m$ is not always satisfied.
Here are some examples.

\noindent
(1) Suppose $p>2$.
Set $\alpha:=(2+p)^{1/(p-1)}, \beta:=(-p)^{1/(p-1)}$ and $K:=\mbb{Q}_p(\alpha \beta)$.
The field $K$ is totally ramified over $\mbb{Q}_p$ since 
the minimal polynomial of $\alpha\beta$ over $\mbb{Q}_p$ is 
an Eisenstein polynomial $X^{p-1}-(2+p)(-p)$. 
It is well-known that $\mbb{Q}_p(\beta)=\mbb{Q}_p(\zeta_p)$.
The extension $K(\zeta_p)/K$ is not totally ramified 
since so is $\mbb{Q}_p(\alpha)/\mbb{Q}_p$ and $p>2$
(note that the residue class of $\alpha$ is not contained in $\mbb{F}_p$). 
Now we take any odd prime $p$ such that the extension 
$\mbb{Q}(\alpha)/\mbb{Q}$ is unramified (e.g., $p=3,5,7,\dots $).
Then $K(\zeta_p)/K$ is an unramified extension.
This implies that $m_0=0 < m$.
(Moreover, we see that $m=1$.)

\noindent
(2) Suppose $p=2$ and set $K:=\mbb{Q}_2(\sqrt{-5})$.
Then $K(\zeta_4)/K$ is unramified extension of degree $2$, and thus 
$m_0=1<m$. (Moreover, we see that $m=2$.)

\noindent
(3) Let $K'$ be a finite extension of $\mbb{Q}_p$ such that 
it contains $p$-th roots of unity and $K'(\zeta_{p^{\infty}})/K'$
is a totally ramified extension.
Let $K''$ be an unramified $\mbb{Z}_p$-extension of $K'$.
We denote by $K'_{(n)}$ and $K''_{(n)}$ the unique degree-$p^n$-subextensions of 
$K'(\zeta_{p^{\infty}})/K'$ and $K''/K'$, respectively.
Explicitly, the field
$K'_{(n)}$ coincides with $K'(\zeta_{p^{m'_0+n}})$
where $m'_0=\max \{i\ge 0 \mid \zeta_{p^i}\in K'\}$.
If we denote by $M_{(n)}$ the composite field of  $K'_{(n)}$ and $K''_{(n)}$,
then we have  isomorphisms 
$$
\mrm{Gal}(M_{(n)}/K')\simeq 
\mrm{Gal}(K'_{(n)}/K')\times \mrm{Gal}(K''_{(n)}/K')  
\simeq \mbb{Z}/p^n\mbb{Z}\times \mbb{Z}/p^n\mbb{Z}
$$  
Let $K$ be the subfield of $M_{(n)}/K'$ which corresponds to 
the group of diagonal components of $\mrm{Gal}(M_{(n)}/K')\simeq 
\mbb{Z}/p^n\mbb{Z}\times \mbb{Z}/p^n\mbb{Z}$ 
via Galois theory. We consider $m_0$ and $m$ for this $K$. 
Since $K\cap K'_{(n)}=K'$, we know $m_0=m'_0$.
On the other hand,
since $M_{(n)}=KL_{(n)}=K(\zeta_{p^{m'_0+n}})$ and the extension $M_{(n)}/K$
is unramified, we have $m\ge m_0'+n=m_0+n$.
\end{remark}


\subsection{Lattices in potentially semi-stable representations}
\label{3.1}

In this subsection 
we define a notion of $(\vphi,\hat{G}_L,K)$-modules
which classifies 
lattices in
potentially semi-stable $\mbb{Q}_p$-representations of $G_K$
which are semi-stable over $L$.

\begin{definition}
\label{def}
A {\it $(\vphi,\hat{G}_L,K)$-module of height $\le r$ over $\mfS_L$}
is a pair $(\hat{\mfM}, G_K)$ where 
\begin{itemize}
\item[(1)] $\hat{\mfM}=(\mfM,\vphi,\hat{G}_L)$ is an object of $\Mod^{r,\hat{G}_L}_{/\mfS_L}$, 
\item[(2)] $G_K$ is a $W(R)$-semilinear continuous $G_K$-action on 
           $W(R)\otimes_{\vphi,\mfS_L}\mfM$,
\item[(3)] the $G_K$-action commutes with $\vphi_{W(R)}\otimes \vphi_{\mfM}$, and
\item[(4)] the $W(R)$-semilinear $G_L$-action on 
           $W(R)\otimes_{\vphi,\mfS_L}\mfM (\simeq W(R)\otimes_{\whR_L} (\whR_L\otimes_{\vphi,\mfS_L}\mfM))$
           induced from the $\hat{G}_L$-structure of $\hat{\mfM}\in\Mod^{r,\hat{G}_L}_{/\mfS_L}$
           coincides with     
           the restriction of the $G_K$-action of (2) to $G_L$.
 
\end{itemize}
If $(\hat{\mfM}, G_K)$ is a $(\vphi,\hat{G}_L,K)$-module of height $\le r$ over $\mfS_L$,
we often abuse notations by writing $\hat{\mfM}$ for $(\hat{\mfM}, G_K)$ for simplicity.
We denote by $\Mod^{r,\hat{G}_L,K}_{/\mfS_L}$
the category of 
$(\vphi,\hat{G}_L,K)$-modules of height $\le r$ over $\mfS_L$.
\end{definition}
\noindent
We define a contravariant functor 
$\hat{T}_{L/K}\colon \Mod^{r,\hat{G}_L,K}_{/\mfS_L}\to 
\mrm{Rep}_{\mbb{Z}_p}(G_K)$
by
$$
\hat{T}_{L/K}(\hat{\mfM})=
\mrm{Hom}_{W(R),\vphi}(W(R)\otimes_{\vphi, \mfS_L} \mfM, W(R)) 
$$
for an object $\hat{\mfM}$ of $\Mod^{r,\hat{G}_L,K}_{/\mfS_L}$
with underlying Kisin module $\mfM$.
Here a $G_K$-action on 
$\hat{T}_{L/K}(\hat{\mfM})$ is given by 
$(\sigma.g)(x)=\sigma(g(\sigma^{-1}x))$ 
for $\sigma\in G_K, g\in \hat{T}_{L/K}(\hat{\mfM}), 
x\in W(R)\otimes_{\vphi, \mfS_L} \mfM$.
Note that we have natural isomorphisms 
\begin{align*}
\mrm{Hom}_{\whR_L,\vphi}(\whR_L\otimes_{\vphi, \mfS_L} \mfM, W(R))
&\overset{\sim}{\rightarrow}
\mrm{Hom}_{W(R),\vphi}(W(R)\otimes_{\vphi, \whR_L}(\whR_L\otimes_{\vphi, \mfS_L} \mfM), W(R))\\
&\overset{\sim}{\rightarrow}
\mrm{Hom}_{W(R),\vphi}(W(R)\otimes_{\vphi, \mfS_L}\mfM, W(R)).
\end{align*}
Thus we obtain 
\begin{equation}
\label{eta}
\eta\colon 
\hat{T}_L(\hat{\mfM})\overset{\sim}{\longrightarrow} \hat{T}_{L/K}(\hat{\mfM}).
\end{equation}
This is  $G_L$-equivariant by the condition (4) of Definition \ref{def}.
In particular, 
$\hat{T}_{L/K}(\hat{\mfM})\otimes_{\mbb{Z}_p} \mbb{Q}_p$ is semi-stable over $L$
by Theorem \ref{Thm:Liu} (2).

The goal of the rest of this subsection is to prove the following theorem.
\begin{theorem}
\label{thm1}
The contravariant functor $\hat{T}_{L/K}$ induces an anti-equivalence 
between the following categories:
\begin{itemize}
\item[--] The category of 
$(\vphi,\hat{G}_L,K)$-modules of height $\le r$ over $\mfS_L$.
\item[--] The category of $G_K$-stable $\mbb{Z}_p$-lattices 
in potentially semi-stable 
$\mbb{Q}_p$-representations of $G_K$ 
which are semi-stable over $L$
and have Hodge-Tate weights in $[0,r]$.
\end{itemize}
\end{theorem}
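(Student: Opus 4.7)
The plan is to reduce Theorem \ref{thm1} to Theorem \ref{Thm:Liu}(2) applied to $G_L$, using the isomorphism $\eta$ of (\ref{eta}) to translate between $\hat{T}_L$ and $\hat{T}_{L/K}$. First I would treat the two easier directions. For the image being contained in the target category, observe that for $\hat{\mfM} \in \Mod^{r,\hat{G}_L,K}_{/\mfS_L}$ the restriction of the $G_K$-action on $\hat{T}_{L/K}(\hat{\mfM})$ to $G_L$ agrees via $\eta$ with the $G_L$-action on $\hat{T}_L(\hat{\mfM})$; by Theorem \ref{Thm:Liu}(2) the latter is a lattice in a semi-stable $\mbb{Q}_p$-representation of $G_L$ with Hodge--Tate weights in $[0,r]$, so $\hat{T}_{L/K}(\hat{\mfM})\otimes_{\mbb{Z}_p}\mbb{Q}_p$ is a potentially semi-stable $G_K$-representation of the desired shape. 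For full faithfulness, any $G_K$-equivariant $\mbb{Z}_p$-linear map $\hat{T}_{L/K}(\hat{\mfM}_1) \to \hat{T}_{L/K}(\hat{\mfM}_2)$ is in particular $G_L$-equivariant, hence comes from a unique $(\vphi,\hat{G}_L)$-morphism $\hat{\mfM}_1 \to \hat{\mfM}_2$ by Theorem \ref{Thm:Liu}(2), and one checks via the evaluation pairing that such a morphism is automatically $G_K$-equivariant on $W(R)\otimes_{\vphi,\mfS_L}\mfM_i$.

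The main work lies in essential surjectivity. Given a $G_K$-lattice $T$ in a $\mbb{Q}_p$-representation $V$ that is semi-stable over $L$ with Hodge--Tate weights in $[0,r]$, Theorem \ref{Thm:Liu}(2) applied to $G_L$ produces (uniquely up to isomorphism) an object $\hat{\mfM}=(\mfM,\vphi,\hat{G}_L) \in \Mod^{r,\hat{G}_L}_{/\mfS_L}$ with $\hat{T}_L(\hat{\mfM}) \simeq T|_{G_L}$. The remaining task is to endow $W(R)\otimes_{\vphi,\mfS_L}\mfM$ with a $W(R)$-semilinear continuous $G_K$-action satisfying conditions (2)--(4) of Definition \ref{def}. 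I would construct this action using the evaluation pairing
\[
\langle\,\cdot\,,\,\cdot\,\rangle\colon (W(R)\otimes_{\vphi,\mfS_L}\mfM)\times T \longrightarrow W(R),\qquad \langle a\otimes x,\,f\rangle := a\cdot\vphi(f(x)),
\]
where $T$ is identified with $T_{\mfS_L}(\mfM)=\mrm{Hom}_{\mfS_L,\vphi}(\mfM,W(R))$ via $\eta$ and Theorem \ref{Thm:Liu}(1). The induced $W(R)$-linear, $\vphi$-equivariant map $\iota\colon W(R)\otimes_{\vphi,\mfS_L}\mfM \to \mrm{Hom}_{\mbb{Z}_p}(T,W(R))$ is $G_L$-equivariant, and injectivity follows from standard facts in Kisin's theory.

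The crux, and the step I expect to be hardest, is proving that the image of $\iota$ is stable under the natural $G_K$-action on $\mrm{Hom}_{\mbb{Z}_p}(T,W(R))$: this is nontrivial precisely because $\mfS_L$ is not $G_K$-stable (elements of $G_K\setminus G_L$ can move $\pi_L$ and twist $u_L$), so there is no obvious formula for $g\cdot(a\otimes x)$. I would approach this by first reducing to the case where $L/K$ is Galois (replacing $L$ by its Galois closure, which does not affect the semi-stability hypothesis), and then exploiting the finite group action of $\mrm{Gal}(L/K)$ on $T$ together with the rigidity of Kisin modules: for $g\in G_K$, the submodule $g\cdot\iota(W(R)\otimes_{\vphi,\mfS_L}\mfM)$ is a $\vphi$-stable, finitely generated $W(R)$-submodule of $\mrm{Hom}_{\mbb{Z}_p}(T,W(R))$ with the same abstract characterization as $\iota(W(R)\otimes_{\vphi,\mfS_L}\mfM)$ (e.g.\ via the associated filtered $(\vphi,N)$-module structure obtained from $V$), so a uniqueness argument forces the two to coincide. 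Once the $G_K$-action has been transported back to $W(R)\otimes_{\vphi,\mfS_L}\mfM$, continuity, $W(R)$-semilinearity, commutation with Frobenius, and compatibility on $G_L$ with the given $\hat{G}_L$-action are formal consequences of the construction, and the resulting object lies in $\Mod^{r,\hat{G}_L,K}_{/\mfS_L}$ with $\hat{T}_{L/K}(\hat{\mfM})\simeq T$ as $G_K$-representations.
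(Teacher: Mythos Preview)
Your overall architecture matches the paper's: the target of $\hat{T}_{L/K}$ and full faithfulness are handled via $\eta$ and the comparison maps exactly as you outline, and essential surjectivity is reduced to showing that the image of $W(R)\otimes_{\vphi,\mfS_L}\mfM$ inside $W(R)\otimes_{\mbb{Z}_p}T^{\vee}$ is $G_K$-stable. The divergence is in how that $G_K$-stability is actually established.

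Your proposed ``uniqueness argument'' is the weak point. Saying that $g\cdot\iota(W(R)\otimes_{\vphi,\mfS_L}\mfM)$ has ``the same abstract characterization (e.g.\ via the associated filtered $(\vphi,N)$-module)'' is not a proof: the filtered $(\vphi,N)$-module is an invariant of $V$, not of the particular $W(R)$-lattice sitting inside $W(R)\otimes T^{\vee}$, so it does not by itself pin down the image. What one genuinely needs here is a statement that the image of $W(R)\otimes_{\vphi,\mfS_L}\mfM$ depends only on $T$ and not on the choice of uniformizer and compatible system $(\pi_{L,n})$; this is exactly the content of \cite[Theorem 3.2.1]{Li5}, which is a nontrivial input. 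The paper does invoke that result, but only for the \emph{descent} from the Galois closure $L'$ back to $L$ in the final lemma; for the Galois case itself it takes a different, explicit route. Namely, it passes to $B^+_{\mrm{st}}$, identifies the $W(k_L)[1/p]$-space $\bar{D}\subset B^+_{\mrm{st}}\otimes s(D)$ with $D_{\mrm{st}}(V)$ (which carries a $G_K$-action because $L/K$ is Galois), and then computes directly that $g(s(e_1),\dots,s(e_d))=(s(e_1),\dots,s(e_d))\exp(-\lambda_g\bar{N})A_g$, whence $B^+_{\mrm{cris}}\otimes_{\vphi,\mfS_L}\mfM$ is $G_K$-stable; the passage from $B^+_{\mrm{cris}}$ down to $W(R)$ then follows from Proposition~\ref{comparison}(1) and \cite[Lemma 3.2.2]{Li3}.

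Two further remarks. First, your phrase ``replacing $L$ by its Galois closure'' glosses over the descent step: proving the theorem over the Galois closure $L'$ gives you an object of $\Mod^{r,\hat{G}_{L'},K}_{/\mfS_{L'}}$, and getting from there to an object over $\mfS_L$ again requires the compatibility theorem of \cite{Li5} to match up the two images inside $W(R)\otimes T^{\vee}$. Second, the paper's explicit formula for the $G_K$-action in the Galois case is not merely a matter of taste: equation~(\ref{action}) is reused verbatim in the proof of Proposition~\ref{Main2}, where one needs to know that the action on a basis of $\vphi^{\ast}\mfM$ has coefficients in $\mcal{R}_K$. A pure existence/uniqueness argument would not supply that information.
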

The above theorem follows by essentially the same arguments of  Liu 
(\cite{Li3}, \cite{Li4}), 
but  we write a proof here for the sake of completeness.
Before a proof, we recall Liu's comparison morphisms 
between $(\vphi,\hat{G}_L)$-modules and representations associated with them.
Furthermore, 
we define its variant for  $(\vphi,\hat{G}_L,K)$-modules.

Let $\hat{\mfM}=(\mfM,\vphi,\hat{G}_L)$ 
be a weak $(\vphi,\hat{G}_L)$-module of height $\le r$ over $\mfS_L$.
By identifying $\hat{T}_L(\hat{\mfM})$ with 
$\mrm{Hom}_{W(R),\vphi}(W(R)\otimes_{\vphi, \whR_L}(\whR_L\otimes_{\vphi, \mfS_L} \mfM), W(R))$,
we define a $W(R)$-linear map 
$$
\hat{\iota}_L\colon W(R)\otimes_{\whR_L} (\whR_L\otimes_{\vphi,\mfS_L} \mfM) \to
W(R)\otimes_{\mbb{Z}_p} \hat{T}^{\vee}_L(\hat{\mfM})
$$
by the composite 
$W(R)\otimes_{\whR_L} (\whR_L\otimes_{\vphi,\mfS_L} \mfM) 
\to
\mrm{Hom}_{\mbb{Z}_p}(\hat{T}_L(\hat{\mfM}),W(R))\overset{\sim}{\to}
W(R)\otimes_{\mbb{Z}_p} \hat{T}^{\vee}_L(\hat{\mfM})$.
Here, the first arrow is defined by $x\mapsto (f\mapsto f(x), \forall{f}\in \hat{T}_L(\hat{\mfM}))$
and the second is a natural isomorphism.
Also, for a $(\vphi,\hat{G}_L,K)$-module $\hat{\mfM}$ 
of height $\le r$ over $\mfS_L$,
we define a natural $W(R)$-linear map 
$$
\hat{\iota}_{L/K}\colon W(R)\otimes_{\vphi,\mfS_L}\mfM \hookrightarrow 
W(R)\otimes_{\mbb{Z}_p} \hat{T}^{\vee}_{L/K}(\hat{\mfM})
$$
by a similar way. Let $\mfrak{t}$ be an element of $W(R)\smallsetminus pW(R)$
such that $\vphi(\mfrak{t})=pE_L(u_L)E_L(0)^{-1}\mfrak{t}$.
Such $\mfrak{t}$ is unique up to units of $\mbb{Z}_p$.

\begin{proposition}
\label{comparison}
(1) (\cite[Proposition 3.1.3]{Li3})
The map
$
\hat{\iota}_L
$
as above is injective, which preserves Frobenius and $G_L$-actions.
Furthermore, we have
$\vphi(\mfrak{t})^r(W(R)\otimes_{\mbb{Z}_p} \hat{T}^{\vee}_L(\hat{\mfM}))
\subset \mrm{Im}\ \hat{\iota}_L$.

\noindent
(2) The map
$
\hat{\iota}_{L/K}
$ 
as above is injective, which preserves Frobenius and $G_K$-actions.
Furthermore, we have
$\vphi(\mfrak{t})^r(W(R)\otimes_{\mbb{Z}_p} \hat{T}^{\vee}_{L/K}(\hat{\mfM}))
\subset \mrm{Im}\ \hat{\iota}_{L/K}$.

\noindent
(3)  Let $\hat{\mfM}$
be a $(\vphi,\hat{G}_L,K)$-module of height $\le r$ over $\mfS_L$
with underlying Kisin module $\mfM$.
Then the following diagram is commutative:
\begin{center}
$
\displaystyle \xymatrix{
W(R)\otimes_{\whR_L} (\whR_L\otimes_{\vphi,\mfS_L} \mfM) 
\ar@{^{(}->}^{\hat{\iota}_L}[rr] \ar[d]_{\wr}
& &  
W(R)\otimes_{\mbb{Z}_p} \hat{T}^{\vee}_L(\hat{\mfM})  
\\
W(R)\otimes_{\vphi,\mfS_L}\mfM \ar@{^{(}->}^{\hat{\iota}_{L/K}}[rr]
& & 
W(R)\otimes_{\mbb{Z}_p} \hat{T}^{\vee}_{L/K}(\hat{\mfM}) 
\ar[u]_{W(R)\otimes \eta^{\vee}}^{\wr}
}$
\end{center}
Here, the left vertical arrow is a natural isomorphism and  
$\eta$ is defined in (\ref{eta}). 
\end{proposition}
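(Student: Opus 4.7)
The plan is to take part (1) as a black box, since it is Liu's \cite[Proposition 3.1.3]{Li3}, and then to deduce (2) from (1) via (3). So the main task is to establish (3), after which (2) becomes essentially formal.

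For (3), I would prove commutativity by unwinding the construction of each arrow on a pure tensor $1 \otimes x$ with $x \in \mfM$. Both $\hat{\iota}_L$ and $\hat{\iota}_{L/K}$ are defined as the canonical evaluation $y \mapsto (f \mapsto f(y))$ composed with the natural isomorphism identifying $\mrm{Hom}_{\mbb{Z}_p}(\hat{T},W(R))$ with $W(R) \otimes_{\mbb{Z}_p} \hat{T}^\vee$; the left vertical arrow is the canonical associativity isomorphism $W(R)\otimes_{\whR_L}(\whR_L\otimes_{\vphi,\mfS_L}\mfM) \simeq W(R)\otimes_{\vphi,\mfS_L}\mfM$; and the right vertical arrow $W(R)\otimes \eta^\vee$ is the $W(R)$-linear dual of the hom--tensor adjunction isomorphism that defines $\eta$ in (\ref{eta}). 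Commutativity then reduces to the tautology that $\eta(f)(1\otimes x) = f(1\otimes x)$ for any $f \in \hat{T}_L(\hat{\mfM})$, which is built into $\eta$.

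For (2), I would transfer each assertion across the square in (3). Both vertical arrows are $W(R)$-linear isomorphisms that commute with Frobenius (recall that $\eta$ and hence $\eta^\vee$ are Frobenius-equivariant by construction). Consequently, injectivity of $\hat{\iota}_{L/K}$ and the inclusion $\vphi(\mft)^r(W(R)\otimes_{\mbb{Z}_p} \hat{T}^\vee_{L/K}(\hat{\mfM})) \subset \mrm{Im}\,\hat{\iota}_{L/K}$ both follow by chasing the corresponding statements of (1) through the diagram, since multiplication by $\vphi(\mft)^r$ commutes with the vertical isomorphisms. Frobenius-compatibility transfers the same way. The $G_L$-equivariance of $\hat{\iota}_{L/K}$ follows from that of $\hat{\iota}_L$ together with the fact, already recorded after (\ref{eta}), that $\eta$ is $G_L$-equivariant on account of condition (4) of Definition \ref{def}. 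The only genuinely new assertion is $G_K$-equivariance; but this is tautological from the construction, since the $G_K$-action on $\hat{T}^\vee_{L/K}(\hat{\mfM})$ is defined so that evaluation pairs $G_K$-equivariantly with the $G_K$-action on $W(R)\otimes_{\vphi,\mfS_L}\mfM$ fixed by Definition \ref{def}(2).

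The main obstacle I foresee is not mathematical depth but bookkeeping: cleanly tracking the three incarnations of the same underlying module (over $\whR_L$, the base change to $W(R)$ along $\whR_L\hookrightarrow W(R)$, and the direct $W(R)\otimes_{\vphi,\mfS_L}\mfM$), and verifying that condition (4) of Definition \ref{def} does exactly the work of aligning the restriction to $G_L$ of the axiomatic $G_K$-action with Liu's $\hat{G}_L$-action, so that the right vertical arrow of (3) intertwines all structures as asserted.
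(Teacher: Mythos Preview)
Your proposal is correct. The paper's own proof is a single sentence: ``The commutativity of (3) is clear by construction, and the rest assertions follow by essentially the same proof as \cite[Proposition 3.1.3]{Li3}.'' Your treatment of (3) matches this exactly, and your handling of the $G_K$-equivariance in (2) as tautological from the definition of the action on $\hat{T}_{L/K}$ is right.

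There is one small difference in strategy for (2). The paper obtains (2) by rerunning Liu's argument for $\hat{\iota}_{L/K}$ directly, whereas you transfer the conclusions of (1) across the commutative square (3) and only argue $G_K$-equivariance from scratch. Your route is slightly more economical, since once (3) is established the injectivity, Frobenius-compatibility, and the $\vphi(\mft)^r$-bound are automatic; the paper's route is more self-contained in that (2) does not logically depend on (3). Either way the content is the same, and your bookkeeping concern about aligning the $G_L$-action coming from $\hat{G}_L$ with the restriction of the axiomatic $G_K$-action is precisely what condition (4) of Definition \ref{def} guarantees.
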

\begin{proof}
The commutativity of (3) is clear by construction, and
the rest assertions follow by essentially the same proof as  \cite[Proposition 3.1.3]{Li3}.
\end{proof}

In the rest of this subsection,
we denote by $\mrm{Rep}^{r,L\mathchar`-\mrm{st}}_{\mbb{Z}_p}(G_K)$ the full subcategory of 
$\mrm{Rep}_{\mbb{Z}_p}(G_K)$ appeared in Theorem \ref{thm1}.
The isomorphism $\eta$ shows below.

\begin{lemma}
The functor $\hat{T}_{L/K}$ has values in $\mrm{Rep}^{r,L\mathchar`-\mrm{st}}_{\mbb{Z}_p}(G_K)$.
\end{lemma}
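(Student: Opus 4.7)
The plan is to transport the structural properties from $\hat{T}_L$ to $\hat{T}_{L/K}$ via the $G_L$-equivariant isomorphism $\eta\colon \hat{T}_L(\hat{\mfM}) \overset{\sim}{\to} \hat{T}_{L/K}(\hat{\mfM})$ of (\ref{eta}), and then verify that the additional $G_K$-structure assembles correctly on top. By Definition \ref{def}(1), the underlying weak $(\vphi,\hat{G}_L)$-module is required to lie in $\Mod^{r,\hat{G}_L}_{/\mfS_L}$, i.e., to be a genuine (not merely weak) $(\vphi,\hat{G}_L)$-module. Thus Theorem \ref{Thm:Liu}(2) applies directly: $\hat{T}_L(\hat{\mfM})$ is a free $\mbb{Z}_p$-module of finite rank whose generic fibre is semi-stable as a $G_L$-representation with Hodge-Tate weights in $[0,r]$. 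Pushing this across $\eta$ yields the same statement for $\hat{T}_{L/K}(\hat{\mfM})$ viewed as a $G_L$-representation.

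Next I would verify the $G_K$-structure. By construction $\hat{T}_{L/K}(\hat{\mfM})$ carries a $\mbb{Z}_p$-linear $G_K$-action induced from the continuous $G_K$-action on $W(R)\otimes_{\vphi,\mfS_L}\mfM$ granted by Definition \ref{def}(2), and condition (4) of the same definition ensures that the restriction of this $G_K$-action to $G_L$ agrees, through $\eta$, with the $G_L$-action on $\hat{T}_L(\hat{\mfM})$. Continuity of the full $G_K$-action on the finite free $\mbb{Z}_p$-module $\hat{T}_{L/K}(\hat{\mfM})$ then follows automatically from the openness of $G_L$ in $G_K$ together with the continuity of the $G_L$-restriction already obtained from $\hat{T}_L(\hat{\mfM})$.

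To conclude, since $L/K$ is a finite extension, a $\mbb{Q}_p$-representation of $G_K$ whose restriction to $G_L$ is semi-stable is by definition potentially semi-stable over $K$ and semi-stable over $L$; moreover, Hodge-Tate weights are invariant under restriction to open subgroups, so they remain in $[0,r]$ for the full $G_K$-action. Hence $\hat{T}_{L/K}(\hat{\mfM})$ lies in $\mrm{Rep}^{r,L\mathchar`-\mrm{st}}_{\mbb{Z}_p}(G_K)$. I do not foresee a serious obstacle: the lemma is essentially a repackaging of Liu's classification (Theorem \ref{Thm:Liu}(2)) across the comparison $\eta$, and the only point that might require a brief comment is the continuity of the $G_K$-action, which is immediate from the openness of $G_L$ in $G_K$ and the finiteness of the index.
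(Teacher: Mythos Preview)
Your proposal is correct and follows essentially the same approach as the paper: the paper's entire proof is the single sentence ``The isomorphism $\eta$ shows below,'' having already noted just before the lemma that $\eta$ is $G_L$-equivariant by Definition~\ref{def}(4) and that $\hat{T}_{L/K}(\hat{\mfM})\otimes_{\mbb{Z}_p}\mbb{Q}_p$ is semi-stable over $L$ by Theorem~\ref{Thm:Liu}(2). Your write-up simply spells out these steps in more detail (and adds the remark on continuity), but the argument is the same.
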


Next we show the fully faithfulness of the functor $\hat{T}_{L/K}$.

\begin{lemma}
The functor $\hat{T}_{L/K}$ is fully faithful.
\end{lemma}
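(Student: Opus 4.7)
The plan is to bootstrap from the fully faithfulness of $\hat{T}_L$ on (weak) $(\vphi,\hat{G}_L)$-modules (Theorem \ref{Thm:Liu}(3)) via the $G_L$-equivariant isomorphism $\eta$ of (\ref{eta}), and then use the comparison map $\hat{\iota}_{L/K}$ of Proposition \ref{comparison}(2) to transfer $G_K$-equivariance back from the representation side to the module side. A morphism in $\Mod^{r,\hat{G}_L,K}_{/\mfS_L}$ is, by the natural reading of Definition \ref{def}, a morphism of underlying $(\vphi,\hat{G}_L)$-modules with the additional property that the induced $W(R)$-linear map on $W(R)\otimes_{\vphi,\mfS_L}\mfM$ is $G_K$-equivariant; this is the structure we must recover.

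For faithfulness, if two morphisms $f_1,f_2\colon \hat{\mfM}\to\hat{\mfN}$ become equal after applying $\hat{T}_{L/K}$, then by $\eta$ they also agree after applying $\hat{T}_L$, and Theorem \ref{Thm:Liu}(3) gives $f_1=f_2$ as morphisms of the underlying $(\vphi,\hat{G}_L)$-modules, hence in $\Mod^{r,\hat{G}_L,K}_{/\mfS_L}$. For fullness, take a $G_K$-equivariant morphism $\alpha\colon \hat{T}_{L/K}(\hat{\mfN})\to \hat{T}_{L/K}(\hat{\mfM})$. Restricting to $G_L$ and conjugating by $\eta$ yields a $G_L$-equivariant morphism $\hat{T}_L(\hat{\mfN})\to \hat{T}_L(\hat{\mfM})$, which by fully faithfulness of $\hat{T}_L$ comes from a unique morphism $f\colon \hat{\mfM}\to \hat{\mfN}$ of $(\vphi,\hat{G}_L)$-modules inducing $\tilde f\colon W(R)\otimes_{\vphi,\mfS_L}\mfM\to W(R)\otimes_{\vphi,\mfS_L}\mfN$.

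It then remains to check that $\tilde f$ commutes with $G_K$, and this is where the comparison map of Proposition \ref{comparison}(2) does the work. Directly from the definition of $\hat\iota_{L/K}$ as $x\mapsto (g\mapsto g(x))$, one obtains a commutative square
$$
\xymatrix{
W(R)\otimes_{\vphi,\mfS_L}\mfM \ar@{^{(}->}^{\hat\iota^{\mfM}_{L/K}}[rr] \ar[d]_{\tilde f}
& & W(R)\otimes_{\mbb{Z}_p}\hat{T}^{\vee}_{L/K}(\hat{\mfM}) \ar[d]^{W(R)\otimes \alpha^{\vee}} \\
W(R)\otimes_{\vphi,\mfS_L}\mfN \ar@{^{(}->}^{\hat\iota^{\mfN}_{L/K}}[rr]
& & W(R)\otimes_{\mbb{Z}_p}\hat{T}^{\vee}_{L/K}(\hat{\mfN})
}
$$
whose right vertical arrow is $G_K$-equivariant (since $\alpha$ is) and whose horizontal arrows are injective and $G_K$-equivariant. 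A one-line diagram chase, applying $\sigma\in G_K$ and using injectivity of $\hat\iota^{\mfN}_{L/K}$, forces $\tilde f(\sigma x)=\sigma\tilde f(x)$ for all $x$. Hence $f$ is a morphism in $\Mod^{r,\hat{G}_L,K}_{/\mfS_L}$ and evidently satisfies $\hat{T}_{L/K}(f)=\alpha$.

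The only potential subtlety is the commutativity of the square above and the injectivity/$G_K$-equivariance of $\hat\iota_{L/K}$, but both are supplied by Proposition \ref{comparison}; once these are in hand the argument is essentially formal, so I do not expect any serious obstruction here.
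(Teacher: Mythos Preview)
Your proof is correct and follows essentially the same approach as the paper: reduce via $\eta$ to the full faithfulness of $\hat{T}_L$ (Theorem \ref{Thm:Liu}(3)) to obtain the underlying morphism of $(\vphi,\hat{G}_L)$-modules, then use the commutative square involving $\hat{\iota}_{L/K}$ and the $G_K$-equivariance/injectivity from Proposition \ref{comparison}(2) to deduce that the induced $W(R)$-linear map is $G_K$-equivariant. The only minor imprecision is that the commutativity of your square is not literally stated in Proposition \ref{comparison} but, as you yourself note earlier, follows directly from the definition of $\hat\iota_{L/K}$ together with the identification $\hat{T}_{L/K}(f)=\alpha$ of underlying $\mbb{Z}_p$-linear maps coming from the naturality of $\eta$.
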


\begin{proof}
Let $\hat{\mfM}$ and 
$\hat{\mfM}'$
be $(\vphi,\hat{G}_L,K)$-modules of height $\le r$ over $\mfS_L$
with underlying Kisin modules $\mfM$ and $\mfM'$, respectively.
Take any $G_K$-equivariant morphism
$f\colon \hat{T}_{L/K}(\hat{\mfM})\to \hat{T}_{L/K}(\hat{\mfM}')$.
By the map $\eta$, we identify $\hat{T}_{L/K}(\hat{\mfM})$ and $\hat{T}_{L/K}(\hat{\mfM}')$
with $\hat{T}_L(\hat{\mfM})$ and $\hat{T}_L(\hat{\mfM}')$, respectively.
Since $\hat{T}_L$ is fully faithful,
there exists a unique morphism 
$\mfrak{f}\colon \hat{\mfM}'\to \hat{\mfM}$
of $(\vphi,\hat{G}_L)$-modules of height $\le r$ over $\mfS_L$
such that $\hat{T}_L(\mfrak{f})=f$.
It is enough to show that 
$\mfrak{f}$ is in fact a morphism of $(\vphi,\hat{G}_L,K)$-modules,
that is, $W(R)\otimes \mfrak{f}\colon W(R)\otimes_{\vphi, \mfS_L}\mfM'\to W(R)\otimes_{\vphi, \mfS_L}\mfM$
is $G_K$-equivariant.
Consider the following diagram:
\begin{center}
$
\displaystyle \xymatrix{
W(R)\otimes_{\vphi,\mfS_L} \mfM 
\ar@{^{(}->}^{\hat{\iota}_{L/K}}[rr] 
& &  
W(R)\otimes_{\mbb{Z}_p} \hat{T}^{\vee}_{L/K}(\hat{\mfM})  
\\
W(R)\otimes_{\vphi,\mfS_L}\mfM' 
\ar@{^{(}->}^{\hat{\iota}_{L/K}}[rr]
\ar[u]^{W(R)\otimes \mfrak{f}}
& & 
W(R)\otimes_{\mbb{Z}_p} \hat{T}_{L/K}(\hat{\mfM}') 
\ar[u]^{W(R)\otimes f^{\vee}}
}$
\end{center} 
We see that the above diagram is commutative.
Since $W(R)\otimes f^{\vee}$ and two horizontal arrows
above are $G_K$-equivariant,
so is $W(R)\otimes \mfrak{f}$.
\end{proof}

\begin{lemma}
\label{ess1}
The functor $\hat{T}_{L/K}\colon \Mod^{r,\hat{G}_L,K}_{/\mfS_L}\to 
\mrm{Rep}^{r,L\mathchar`-\mrm{st}}_{\mbb{Z}_p}(G_K)$ 
is essentially surjective
if $L$ is a Galois extension of $K$.
\end{lemma}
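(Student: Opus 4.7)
The plan is to start with $T\in \mrm{Rep}^{r,L\mathchar`-\mrm{st}}_{\mbb{Z}_p}(G_K)$ and apply Theorem \ref{Thm:Liu}(2) to the restriction $T|_{G_L}$, which is a $G_L$-stable lattice in a semi-stable $G_L$-representation with Hodge-Tate weights in $[0,r]$; this produces a $(\vphi,\hat{G}_L)$-module $\hat{\mfM}=(\mfM,\vphi,\hat{G}_L)$ of height $\le r$ over $\mfS_L$ with $\hat{T}_L(\hat{\mfM})\simeq T|_{G_L}$. The remaining task is to upgrade $\hat{\mfM}$ to a $(\vphi,\hat{G}_L,K)$-module realizing $T$, by endowing $\mfrak{M}:=W(R)\otimes_{\vphi,\mfS_L}\mfM$ with a $W(R)$-semilinear continuous $G_K$-action that commutes with $\vphi$ and extends the $G_L$-action induced by $\hat{G}_L$; conditions (1)--(4) of Definition \ref{def} and the identification $\hat{T}_{L/K}(\hat{\mfM})\simeq T$ will then follow formally.

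The natural source of this $G_K$-action is the canonical $G_K$-action on $W(R)\otimes_{\mbb{Z}_p}T^{\vee}$, transported along a comparison map. Fix an isomorphism $\hat{T}_L(\hat{\mfM})\simeq T$ and combine $\hat{\iota}_L$ from Proposition \ref{comparison}(1) with the canonical isomorphism $W(R)\otimes_{\whR_L}(\whR_L\otimes_{\vphi,\mfS_L}\mfM)\simeq \mfrak{M}$ of Proposition \ref{comparison}(3) to obtain a $\vphi$- and $G_L$-equivariant injection
$$
\beta\colon \mfrak{M}\hookrightarrow W(R)\otimes_{\mbb{Z}_p}T^{\vee}
$$
whose image contains $\vphi(\mfrak{t})^r(W(R)\otimes_{\mbb{Z}_p}T^{\vee})$. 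If $\beta(\mfrak{M})$ is stable under the $G_K$-action on the target, pulling back through $\beta$ defines the sought-for $G_K$-action on $\mfrak{M}$, and then the axioms and the identification $\hat{T}_{L/K}(\hat{\mfM})\simeq T$ are immediate.

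The core step is the $G_K$-stability of $\beta(\mfrak{M})$, and here the hypothesis that $L/K$ is Galois enters decisively. For $\sigma\in G_K$, set $\mfN_\sigma:=\sigma(\beta(\mfrak{M}))$. Then $\mfN_\sigma$ is a $W(R)$-submodule of $W(R)\otimes_{\mbb{Z}_p}T^{\vee}$ that is $\vphi$-stable, $G_L$-stable by normality of $G_L$ in $G_K$, and contains $\vphi(\mfrak{t})^r(W(R)\otimes T^{\vee})$ since $\vphi(\mfrak{t})$ is $G_K$-fixed up to a $\mbb{Z}_p^{\times}$-multiple. I would then argue that the datum of $\mfN_\sigma$ together with its induced $G_L$-action gives another $(\vphi,\hat{G}_L)$-structure on a Kisin module realizing $T|_{G_L}$; the full faithfulness half of Theorem \ref{Thm:Liu}(2) will then produce a unique isomorphism $\hat{\mfM}\simeq\hat{\mfM}_{\sigma}$ lifting the identity on $T|_{G_L}$, forcing $\mfN_\sigma=\beta(\mfrak{M})$.

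The main obstacle is precisely that $\sigma$ does not preserve the image of $\mfS_L$ inside $W(R)$ (one has $\sigma([\underline{\pi_L}])\neq [\underline{\pi_L}]$ in general), so the descent from $\mfN_\sigma$ to a genuine Kisin module $\mfM_\sigma$ over $\mfS_L$ is not automatic. Following the strategy of \cite{Li3} and \cite{Li4}, I would extract $\mfM_\sigma$ by first passing to the associated étale $\vphi$-module over the $p$-adic completion of $\mfS_L[1/u_L]$, which is canonically attached to $T|_{G_{L_\infty}}$ and hence unaffected by $\sigma$, and then use uniqueness of height-$\le r$ Kisin lattices inside a fixed étale $\vphi$-module (essentially Proposition \ref{Kisin}) to identify $\mfM_\sigma$ with $\mfM$. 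With $G_K$-stability of $\beta(\mfrak{M})$ established, continuity of the $G_K$-action on $\mfrak{M}$ is inherited from that on $W(R)\otimes_{\mbb{Z}_p}T^{\vee}$, all axioms of Definition \ref{def} follow by construction, and $\hat{T}_{L/K}(\hat{\mfM})\simeq T$ holds by design.
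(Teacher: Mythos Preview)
Your overall architecture matches the paper's: produce $\hat{\mfM}$ from $T|_{G_L}$ via Theorem~\ref{Thm:Liu}(2), and reduce everything to showing that the image of $\hat{\iota}_L$ (your $\beta$) in $W(R)\otimes_{\mbb{Z}_p}T^{\vee}$ is $G_K$-stable. Once that is known, the remaining verification that $\hat{T}_{L/K}(\hat{\mfM})\simeq T$ is exactly what the paper carries out in the short proof of Lemma~\ref{ess1} via Proposition~\ref{comparison}(3) and the $\vphi(\mfrak t)^r$-bound.

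Where you diverge is in the proof of $G_K$-stability itself (the paper's Lemma~\ref{claim}), and here there is a real gap. Your plan is to recognize $\mfN_\sigma=\sigma(\beta(\mfrak{M}))$ as the $W(R)$-span of another Kisin module $\mfM_\sigma$ over $\mfS_L$ and then invoke uniqueness inside the \'etale $\vphi$-module attached to $T|_{G_{L_\infty}}$. But $\sigma\in G_K$ normalizes $G_L$, not $G_{L_\infty}$: applying $\sigma$ replaces the embedding $u_L\mapsto[\underline{\pi_L}]$ by $u_L\mapsto[\sigma(\underline{\pi_L})]$, so $\mfN_\sigma$ is the $W(R)$-span of a Kisin module for the \emph{different} uniformizer system $(\sigma(\pi_{L,n}))_{n\ge 0}$, sitting inside a different copy of $\mcal{O}_{\mcal{E}}$ in $W(\mrm{Fr}\,R)$. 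The \'etale $\vphi$-module is therefore not ``unaffected by $\sigma$'' in the sense you need, and Proposition~\ref{Kisin} (full faithfulness for a \emph{fixed} $\mfS_L$) does not compare lattices across two uniformizer choices. What would close this gap is the compatibility result \cite[Theorem~3.2.1]{Li5}, which the paper does invoke---but only later, in the non-Galois reduction (Lemma following this one), and it is not a formal consequence of the tools you cite.

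The paper's argument for Lemma~\ref{claim} avoids this entirely and is computational rather than categorical. Since $L/K$ is Galois, $D_{\mrm{st}}(V)=(B^+_{\mrm{st}}\otimes V^{\vee})^{G_L}$ carries a $G_K$-action. One identifies $D_{\mrm{st}}(V)$ with an explicit $W(k_L)[1/p]$-subspace $\bar D\subset B^+_{\mrm{st}}\otimes s(D)$ built from the monodromy $\tilde N$ on the Breuil module $\mcal{D}=S_L[1/p]\otimes_{\vphi,\mfS_L}\mfM$, and reads off the formula
\[
g(s(e_1),\dots,s(e_d))=(s(e_1),\dots,s(e_d))\,\exp(-\lambda_g\bar N)\,A_g,\qquad A_g\in GL_d(W(k_L)[1/p]),\ \lambda_g\in B^+_{\mrm{cris}}.
\]
This shows directly that $B^+_{\mrm{cris}}\otimes_{\vphi,\mfS_L}\mfM$ is $G_K$-stable, and the descent to $W(R)\otimes_{\vphi,\mfS_L}\mfM$ follows from Proposition~\ref{comparison}(1) together with \cite[Lemma~3.2.2]{Li3}. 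The Galois hypothesis is used precisely to obtain the $G_K$-action on $D_{\mrm{st}}(V)$, not through any normality of $G_{L_\infty}$.
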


To show this lemma, 
we recall arguments of \cite[\S 2]{Li4}. 
Suppose $L$ is a (not necessary totally ramified) Galois extension of $K$.
Let $T$ be an object of $\mrm{Rep}^{r,L\mathchar`-\mrm{st}}_{\mbb{Z}_p}(G_K)$.
Put $d=\mrm{rank}_{\mbb{Z}_p}T$.
Take a  $(\vphi,\hat{G}_L)$-module 
$\hat{\mfM}=(\mfM,\vphi,\hat{G}_L)$  over $\mfS_L$ 
such that $\hat{T}_L(\hat{\mfM})= T|_{G_L}$.
We consider the map
$\hat{\iota}_L\colon W(R)\otimes_{\vphi,\mfS_L}\mfM
\hookrightarrow W(R)\otimes_{\mbb{Z}_p} \hat{T}^{\vee}_L(\hat{\mfM})
=W(R)\otimes_{\mbb{Z}_p} T^{\vee}$.
By the same argument as the proof of \cite[Lemma 2.3.1]{Li4},
we can check the following
\begin{lemma}
\label{claim}
$W(R)\otimes_{\vphi,\mfS_L}\mfM$ is stable 
under the $G_K$-action via $\hat{\iota}_L$.
\end{lemma}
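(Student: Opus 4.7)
The plan is to show that for every $\sigma\in G_K$, the $W(R)$-submodule
$\mfrak{M}:=\hat{\iota}_L(W(R)\otimes_{\vphi,\mfS_L}\mfM)$ of
$M^{\vee}:=W(R)\otimes_{\mbb{Z}_p}T^{\vee}$ is stable under $\sigma$. Since
$\mfrak{M}$ is the $W(R)$-span of $\hat{\iota}_L(1\otimes \mfM)$ inside $M^{\vee}$ and
$\sigma$ acts $W(R)$-semilinearly, it suffices to prove
$\sigma(\hat{\iota}_L(1\otimes\mfM))\subset \mfrak{M}$. Equivalently, I want to show
that the two $W(R)$-lattices $\mfrak{M}$ and $\sigma(\mfrak{M})$ inside $M^{\vee}$
coincide; applying $\sigma^{-1}$ then yields $\sigma(\mfrak{M})=\mfrak{M}$.

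First, I would record two structural features of $\sigma(\mfrak{M})$. Since $\sigma$
acts on $W(R)$ commuting with $\vphi$ and on $T^{\vee}$ by $\mathbb{Z}_p$-linear maps,
$\sigma(\mfrak{M})$ is a $\vphi$-stable $W(R)$-submodule of $M^{\vee}$ of the same rank
as $\mfrak{M}$. Since $L/K$ is Galois we have $\sigma G_L\sigma^{-1}=G_L$, so
$\sigma(\mfrak{M})$ is also $G_L$-stable. Next, I would transport the comparison estimate
from Proposition \ref{comparison}(1). The element $\mfrak{t}$ is characterized up to
$\mbb{Z}_p^{\times}$ by $\vphi(\mfrak{t})=pE_L(u_L)E_L(0)^{-1}\mfrak{t}$; writing
$\sigma([\underline{\pi_L}])=[\alpha_\sigma]\cdot[\underline{\pi_L}]$ with
$\alpha_\sigma\in R^{\times}$, one checks that $\sigma(\mfrak{t})=c_\sigma\mfrak{t}$ with
$c_\sigma\in W(R)^{\times}$. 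Consequently $\vphi(\mfrak{t})^r M^{\vee}\subset \mfrak{M}\cap\sigma(\mfrak{M})$,
so both lattices contain the same Frobenius twist of $M^{\vee}$.

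The hard part is the uniqueness step: having produced two $\vphi$- and $G_L$-stable
$W(R)$-lattices of the right rank in $M^{\vee}$, both sandwiching
$\vphi(\mfrak{t})^r M^{\vee}$, I need to conclude they are equal. Following the strategy
of \cite[Lemma~2.3.1]{Li4}, I would proceed by identifying $\sigma(\mfrak{M})$ with the
$W(R)$-extension of a Kisin module for $T|_{G_L}$ associated to the change-of-uniformizer
from $(\pi_{L,n})$ to $(\sigma(\pi_{L,n}))$: the $\sigma(\vphi(\mfS_L))$-span of
$\sigma(\hat{\iota}_L(1\otimes\mfM))$ becomes, after the natural identification
$\mfS_L\cong W(k_L)[\![u]\!]$ with $u\mapsto [\sigma(\underline{\pi_L})]$, a Kisin module
of height $\le r$ recovering $T|_{G_{L_\infty'}}$ (where $L_\infty'$ uses the
conjugate Kummer tower). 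The independence of the Kisin classification from the choice
of compatible system of $p$-power roots then forces $\sigma(\mfrak{M})=\mfrak{M}$ inside
$M^{\vee}$.

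The main obstacle is precisely this last paragraph: one must show rigorously that two
$W(R)$-lattices in $M^{\vee}$ that are $\vphi$-stable, $G_L$-stable, have the same rank,
and agree modulo $\vphi(\mfrak{t})^r M^{\vee}$, must coincide. This is where I would
invoke the fully faithfulness of $T_{\mfS_L}$ (Proposition \ref{Kisin}) combined with
Liu's comparison for semi-stable representations, rather than attempting a direct
linear-algebra argument inside $M^{\vee}$.
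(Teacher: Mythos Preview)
Your route is genuinely different from the paper's, and the step you yourself flag as the obstacle is indeed a gap as written. The paper does not try to compare the two $W(R)$-lattices $\mfrak{M}$ and $\sigma(\mfrak{M})$ abstractly. Instead it passes to $B^+_{\mrm{cris}}$ and uses Breuil's theory: since $L/K$ is Galois, $D_{\mrm{st}}(V)=(B^+_{\mrm{st}}\otimes V^{\vee})^{G_L}$ carries a natural $G_K$-action. The paper identifies $D_{\mrm{st}}(V)$ with an explicit subspace $\bar{D}\subset B^+_{\mrm{st}}\otimes_{W(k_L)[1/p]} s(D)$ (built from the $\vphi$-compatible section $s\colon D\hookrightarrow \mcal{D}$ and the monodromy $\tilde{N}$), and then reads off the $G_K$-action on the basis $s(e_1),\dots,s(e_d)$:
\[
g(s(e_1),\dots,s(e_d))=(s(e_1),\dots,s(e_d))\,\exp(-\lambda_g\bar{N})\,A_g,
\]
with $\lambda_g=\log([g(\underline{\pi_L})/\underline{\pi_L}])\in B^+_{\mrm{cris}}$ and $A_g\in GL_d(W(k_L)[1/p])$. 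This shows directly that $B^+_{\mrm{cris}}\otimes_{\vphi,\mfS_L}\mfM$ is $G_K$-stable; descent to $W(R)$ is then a short combination of Proposition~\ref{comparison}(1) and \cite[Lemma~3.2.2]{Li3}. This explicit formula is not incidental: it is reused verbatim in the proof of Proposition~\ref{Main2}.

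Your idea of recognising $\sigma(\mfrak{M})$ as the $W(R)$-extension of a Kisin module for the conjugate uniformizer $\sigma(\pi_L)$ is correct in spirit, but the tool you propose at the end---full faithfulness of $T_{\mfS_L}$---is not the right one: that compares Kisin modules over the \emph{same} $\mfS_L$, whereas here you must compare the images in $W(R)\otimes T^{\vee}$ of Kisin modules over $\mfS_L$ and over $\sigma(\mfS_L)$, two different subrings of $W(R)$. What actually closes your argument is \cite[Theorem~3.2.1]{Li5} (compatibility of Kisin modules for different uniformizers). The paper does invoke that result, but only \emph{later}, to reduce the non-Galois case to the Galois one; for the Galois case itself it gives the self-contained $D_{\mrm{st}}$ argument above. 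So your approach can be salvaged by citing \cite{Li5}, at the cost of losing the explicit action formula that the paper needs downstream.
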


\noindent
We include (a main part of) the proof in {\it loc.\ cit.} of this lemma  here
since we will use this argument again in the next subsection
(cf.\ the proof of Theorem \ref{Main2}).
By \cite{Br}, we know that 
$\mcal{D}:=S_L[1/p]\otimes_{\vphi,\mfS_L}\mfM$
has a structure of a Breuil module\footnote{We do not describe the definition of
Breuil modules in this note. See \S 6.1 of \cite{Br} for axioms of Breuil modules.} 
which 
corresponds to $V|_{G_L}$, where
$V:=T\otimes_{\mbb{Z}_p}\mbb{Q}_p$.
In particular, we have a monodromy operator $N_{\mcal{D}}$ on $\mcal{D}$.
Set $D:=\mcal{D}/I_+S_L[1/p]\mcal{D}$.
There exists a unique $\vphi$-compatible $W(k_L)$-linear section
$s\colon D\hookrightarrow \mcal{D}$.
Breuil showed in {\it loc.\ cit.} that 
$N_{\mcal{D}}$ preserves $s(D)$
and thus
we can define $\tilde{N}:=N_{\mcal{D}}|_{s(D)}\colon s(D)\to s(D)$.
Then the $G_L$-action on 
$B^+_{\mrm{st}}\otimes_{S_L[1/p]}s(D)(=B^+_{\mrm{st}}\otimes_{\whR_L} (\whR_L\otimes_{\vphi,\mfS_L} \mfM))$
induced from the $\hat{G}_L$-structure on $\hat{\mfM}$
is given by 
$$
g(a\otimes x)=\sum^{\infty}_{i=0}g(a)\gamma_i(-\mrm{log}([\underline{\e}(g)]))
\otimes \tilde{N}^i(x)
$$
for any $g\in G_L, a\in B^+_{\mrm{st}}$ and $x\in s(D)$.
Here,  $\underline{\e}(g):=g(\underline{\pi_L})/\underline{\pi_L}\in R^{\times}$.
Set 
$$
\bar{D}:=\left\{\sum^{\infty}_{i=0} \gamma_i(\mfrak{u})\otimes \tilde{N}^i(x)
\mid x\in s(D) \right\}
\subset B^+_{\mrm{st}}\otimes_{W(k_L)[1/p]}s(D)
$$
where $\mfrak{u}:=\mrm{log}([\underline{\pi_L}])\in B^+_{\mrm{st}}$.
This is a $\vphi$-stable $W(k_L)[1/p]$-vector space of dimension $d$.
Setting the monodromy $N_{B^+_{\mrm{st}}}$
on $B^+_{\mrm{st}}$ by $N(\mfrak{u})=1$,
we equip $B^+_{\mrm{st}}\otimes_{W(k_L)[1/p]}s(D)$ 
(resp.\ $B^+_{\mrm{st}}\otimes_{\mbb{Q}_p}V^{\vee}$)
with a monodromy operator $N$ by $N:=N_{B^+_{\mrm{st}}}\otimes 1_{s(D)}$ 
(resp.\ $N:=N_{B^+_{\mrm{st}}}\otimes 1_{V^{\vee}}$).
Then it is easy to see that $\bar{D}$ is stable under $N$.
On the other hand,
we have a natural $G_K$-equivariant injection
$\iota\colon B^+_{\mrm{st}}\otimes_{W(k_L)[1/p]} D_{\mrm{st}}(V)
\hookrightarrow B^+_{\mrm{st}}\otimes_{\mbb{Q}_p} V^{\vee}
$
where $D_{\mrm{st}}(V):=(B^+_{\mrm{st}}\otimes_{\mbb{Q}_p} V^{\vee})^{G_L}$
is a filtered $(\vphi,N)$-module over $L$.
(Here we remark that $D_{\mrm{st}}(V)$ is equipped with a natural $G_K$-action
since $L/K$ is Galois.)
Since $G_L$ acts on $\bar{D}$
trivially (cf.\ \S 7.2 of \cite{Li1}), 
the image of $\bar{D}$ under the injection 
$
B^+_{\mrm{st}}\otimes_{W(k_L)[1/p]}s(D)=
B^+_{\mrm{st}}\otimes_{\whR_L} (\whR_L\otimes_{\vphi,\mfS_L} \mfM)
\overset{\hat{\iota}_{L,B}}{\hookrightarrow} 
B^+_{\mrm{st}}\otimes_{\mbb{Q}_p}V^{\vee} 
$
is equal to  $\iota(D_{\mrm{st}}(V))$.
Here, $\hat{\iota}_{L,B}:=B^+_{\mrm{st}}\otimes \hat{\iota}_L$,
which is compatible with Frobenius and monodromy operators.
Hence we have an isomorphism $\hat{i}\colon D_{\mrm{st}}(V)\overset{\sim}{\longrightarrow} \bar{D}$
which makes the following diagram commutative:
\begin{center}
$
\displaystyle \xymatrix{
D_{\mrm{st}}(V)
\ar_{\wr}^{\hat{i}}[d]
& \subset & 
B^+_{\mrm{st}}\otimes_{W(k_L)[1/p]} D_{\mrm{st}}(V) 
\ar@{^{(}->}^{\qquad \quad \iota}[rr] 
& &
B^+_{\mrm{st}}\otimes_{\mbb{Q}_p} V^{\vee} 
\ar@{=}[d]
\\
\bar{D}
& \subset &
B^+_{\mrm{st}}\otimes_{W(k_L)[1/p]}s(D)
\ar@{^{(}->}^{\qquad \hat{\iota}_{L,B}}[rr]
& &
B^+_{\mrm{st}}\otimes_{\mbb{Q}_p}V^{\vee} 
}$
\end{center} 
Note that $\hat{i}$ is compatible with Frobenius and monodromy operators.
We identify $D_{\mrm{st}}(V)$ with $\bar{D}$ by $\hat{i}$.

Let $e_1,\dots, e_d$ be a $W(k_L)[1/p]$-basis of $D$, and 
define a matrix $\bar{N}\in M_d(W(k_L)[1/p])$ by 
$\tilde{N}(s(e_1),\dots ,s(e_d))=(s(e_1),\dots ,s(e_d))\bar{N}$.
Put $\bar{e}_j=\sum^{\infty}_{i=0} \gamma_i(\mfrak{u})\otimes \tilde{N}^i(s(e_j))$
for any $j$.
Then $\bar{e}_1,\dots ,\bar{e}_d$ is a basis of $D_{\mrm{st}}(V)=\bar{D}$.
An easy calculation shows that
the monodromy $N$ on $D_{\mrm{st}}(V)=\bar{D}$ 
is represented by $\bar{N}$ for this basis, 
that is,
$N(\bar{e}_1,\dots ,\bar{e}_d)=(\bar{e}_1,\dots ,\bar{e}_d)\bar{N}$.
We define a matrix $A_g\in GL_d(W(k_L)[1/p])$
by 
$g(\bar{e}_1,\dots ,\bar{e}_d)
=(\bar{e}_1,\dots ,\bar{e}_d)A_g$
for any $g\in G_K$.
Since the $G_K$-action on $D_{\mrm{st}}(V)=\bar{D}$ is compatible with $N$,
we have the relation $A_gg(\bar{N})=\bar{N}A_g$.
Consequently, we have 
\begin{equation}
\label{action}
g(s(e_1),\dots ,s(e_d))=(s(e_1),\dots ,s(e_d))\mrm{exp}(-\lambda_g\bar{N})A_g
\end{equation}
in $B^+_{\mrm{st}}\otimes_{\mbb{Q}_p}V^{\vee}$, 
where $\lambda_g:=\mrm{log}([g(\underline{\pi_L})/\underline{\pi_L}])\in B^+_{\mrm{cris}}$.
This implies that 
$B^+_{\mrm{cris}}\otimes_{\vphi,\mfS_L}\mfM
=B^+_{\mrm{cris}}\otimes_{W(k_L)[1/p]} s(D)$ is stable 
under the $G_K$-action via $\hat{\iota}_{L,B}$.
Now Lemma \ref{claim} follows by 
an easy combination of Proposition \ref{comparison} (1) and \cite[Lemma 3.2.2]{Li3}
(cf.\ the first paragraph of the proof of \cite[Lemma 2.3.1]{Li4}).

\begin{proof}[Proof of Lemma \ref{ess1}]
We continue to use the same notation as above.
By Lemma \ref{claim}, we know that 
$\hat{\mfM}$
has a structure of an object of $\Mod^{r,\hat{G}_L,K}_{/\mfS_L}$ 
with the property that the map
$W(R)\otimes_{\vphi,\mfS_L}\mfM\overset{\hat{\iota}_L}{\hookrightarrow} 
W(R)\otimes_{\mbb{Z}_p} \hat{T}^{\vee}_L(\hat{\mfM})
=W(R)\otimes_{\mbb{Z}_p} T^{\vee}$
is $G_K$-equivariant.
Let $\eta\colon \hat{T}_L(\hat{\mfM})
\overset{\sim}{\longrightarrow} \hat{T}_{L/K}(\hat{\mfM})$
be the isomorphism defined in (\ref{eta}).
By Proposition \ref{comparison} (3),
we know that $W(R)\otimes \eta^{\vee}$ induces an isomorphism
$\hat{\iota}_L(W(R)\otimes_{\vphi,\mfS_L}\mfM)\overset{\sim}{\longrightarrow}
\hat{\iota}_{L/K}(W(R)\otimes_{\vphi,\mfS_L}\mfM)$,
which is $G_K$-equivariant.
Since $\vphi(\mft)^r(W(R)\otimes_{\mbb{Z}_p} \hat{T}^{\vee}_L(\hat{\mfM}))$
(resp.\ $\vphi(\mft)^r(W(R)\otimes_{\mbb{Z}_p} \hat{T}^{\vee}_{L/K}(\hat{\mfM}))$)
is contained in $(\hat{\iota}_L(W(R)\otimes_{\vphi,\mfS_L}\mfM))$
(resp.\ $(\hat{\iota}_{L/K}(W(R)\otimes_{\vphi,\mfS_L}\mfM))$),
we known that the map
$\vphi(\mft)^r(W(R)\otimes_{\mbb{Z}_p} \hat{T}^{\vee}_L(\hat{\mfM}))
\overset{\sim}{\longrightarrow} 
\vphi(\mft)^r(W(R)\otimes_{\mbb{Z}_p} \hat{T}^{\vee}_{L/K}(\hat{\mfM}))$
induced from $W(R)\otimes \eta^{\vee}$ is $G_K$-equivariant.
Thus so is $\eta\colon T=\hat{T}_L(\hat{\mfM})\overset{\sim}{\longrightarrow}
\hat{T}_{L/K}(\hat{\mfM})$.
\end{proof}

\begin{remark}
Let $\hat{e}_1,\dots ,\hat{e}_d$ be a $\mfS_K$-basis of $\vphi^{\ast}\mfM$,
which is also an  $S_K[1/p]$-basis of $\mcal{D}$.
Denote by $e_i$ the image  of $\hat{e}_i$ under the projection
$\mcal{D}\twoheadrightarrow  D$.
Then $e_1,\dots ,e_d$ is a $W(k)[1/p]$-basis of $D$. 
For these basis, we see that
the matrix $A_g\in GL_d(W(k_L)[1/p])$  as above
is in fact contained in $GL_d(W(k_L))$ by Lemma \ref{ess1}.
(However, we never use this fact in the present paper.)
\end{remark}

\begin{lemma}
The functor $\hat{T}_{L/K}\colon \Mod^{r,\hat{G}_L,K}_{/\mfS_L}\to 
\mrm{Rep}^{r,L\mathchar`-\mrm{st}}_{\mbb{Z}_p}(G_K)$ 
is essentially surjective
for any finite extension $L$ of $K$.
\end{lemma}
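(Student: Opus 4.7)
The plan is to reduce the non-Galois case to the Galois case of Lemma \ref{ess1}. Let $L'$ be the Galois closure of $L$ over $K$, with compatibly chosen uniformizer $\pi_{L'}$ and $p$-power root system $(\pi_{L',n})_{n\ge 0}$. Since $T\otimes_{\mbb{Z}_p}\mbb{Q}_p$ is semi-stable over $L$, a fortiori it is semi-stable over $L'$, so $T\in\mrm{Rep}^{r,L'\mathchar`-\mrm{st}}_{\mbb{Z}_p}(G_K)$. Applying Lemma \ref{ess1} to the Galois extension $L'/K$ produces $\hat{\mfN}\in\Mod^{r,\hat{G}_{L'},K}_{/\mfS_{L'}}$ with $\hat{T}_{L'/K}(\hat{\mfN})\simeq T$. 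Consequently, by Proposition \ref{comparison}(2), the image $\hat{\iota}_{L'/K}(W(R)\otimes_{\vphi,\mfS_{L'}}\mfN)$ is a $\vphi$-stable $W(R)$-submodule of $W(R)\otimes_{\mbb{Z}_p}T^{\vee}$ preserved by the diagonal $G_K$-action.

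On the other hand, Theorem \ref{Thm:Liu}(2) applied to the semi-stable representation $T|_{G_L}$ produces $\hat{\mfM}\in\Mod^{r,\hat{G}_L}_{/\mfS_L}$ with $\hat{T}_L(\hat{\mfM})\simeq T|_{G_L}$. To lift $\hat{\mfM}$ to an object of $\Mod^{r,\hat{G}_L,K}_{/\mfS_L}$ mapping to $T$, it suffices, via $\hat{\iota}_L$ of Proposition \ref{comparison}(1), to equip the image $\hat{\iota}_L(W(R)\otimes_{\vphi,\mfS_L}\mfM)\subset W(R)\otimes_{\mbb{Z}_p}T^{\vee}$ with a $G_K$-action extending the $G_L$-action. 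The natural candidate is the restriction of the diagonal $G_K$-action on $W(R)\otimes T^{\vee}$, and granting stability of the image under this action, conditions (2)–(4) of Definition \ref{def} are immediate: Frobenius-compatibility and $G_L$-equivariance come directly from Proposition \ref{comparison}, and the resulting isomorphism $\hat{T}_{L/K}(\hat{\mfM})\simeq T$ follows via the comparison diagram.

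Thus the key task is to verify $G_K$-stability of $\hat{\iota}_L(W(R)\otimes_{\vphi,\mfS_L}\mfM)$. This will be deduced by identifying it with the submodule $\hat{\iota}_{L'/K}(W(R)\otimes_{\vphi,\mfS_{L'}}\mfN)$ already known to be $G_K$-stable. Both are rank-$d$ $\vphi$-stable $W(R)$-lattices in $W(R)\otimes T^{\vee}$ which contain $\vphi(\mft)^r(W(R)\otimes T^{\vee})$ and which arise, through Kisin's classification, from the \emph{same} representation $T$ restricted to $G_{L_\infty}$ and $G_{L'_\infty}$ respectively. By tracing through the Breuil-module / $\bar{D}$ construction used in the proof of Lemma \ref{ess1}, both submodules are realized as the $W(R)$-span of the image of $s(D)\hookrightarrow B^+_{\mrm{st}}\otimes V^{\vee}$, and the inclusion $D_{\mrm{st},L}(V)\subset D_{\mrm{st},L'}(V)=L'_0\otimes_{L_0}D_{\mrm{st},L}(V)$ forces the two spans to coincide.

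The main obstacle is this final identification, which requires careful bookkeeping of the relation between the embeddings $\mfS_L\hookrightarrow W(R)$ and $\mfS_{L'}\hookrightarrow W(R)$, of the corresponding Kisin modules $\mfM$ and $\mfN$, and of their respective Breuil-module incarnations. Once the two submodules are identified, $G_K$-stability of $\hat{\iota}_L(W(R)\otimes_{\vphi,\mfS_L}\mfM)$ is inherited from Step 1, and this equips $\hat{\mfM}$ with the desired $(\vphi,\hat{G}_L,K)$-structure with $\hat{T}_{L/K}(\hat{\mfM})\simeq T$, yielding essential surjectivity.
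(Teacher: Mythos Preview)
Your overall strategy coincides with the paper's: pass to the Galois closure $L'$, obtain $\hat{\mfN}\in\Mod^{r,\hat{G}_{L'},K}_{/\mfS_{L'}}$ from Lemma \ref{ess1}, take the $(\vphi,\hat{G}_L)$-module $\hat{\mfM}$ attached to $T|_{G_L}$, and transport the $G_K$-action across an identification of $W(R)\otimes_{\vphi,\mfS_{L'}}\mfN$ with $W(R)\otimes_{\vphi,\mfS_L}\mfM$ inside $W(R)\otimes_{\mbb{Z}_p}T^{\vee}$. The paper carries this out verbatim.

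The gap is in your justification of that identification. Your claim that ``both submodules are realized as the $W(R)$-span of the image of $s(D)$'' is not correct: the section $s$ lands in $\mcal{D}=S_L[1/p]\otimes_{\vphi,\mfS_L}\mfM$, and the change-of-basis matrix $X$ between an $\mfS_L$-basis of $\vphi^{\ast}\mfM$ and $s(e_1),\dots,s(e_d)$ lies only in $GL_d(S_L[1/p])$, not in $GL_d(W(R))$. So the $W(R)$-span of $s(D)$ need not equal $\hat{\iota}_L(W(R)\otimes_{\vphi,\mfS_L}\mfM)$. What the $\bar{D}$ argument actually yields is equality of the two images after base change to $B^+_{\mrm{cris}}$; descending this to $W(R)$ is not formal, and the phrase ``compatibly chosen uniformizer $\pi_{L'}$'' hides the real issue, since in general there is no embedding $\mfS_L\hookrightarrow\mfS_{L'}$ compatible with the maps to $W(R)$.

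The paper closes exactly this gap by invoking \cite[Theorem 3.2.1]{Li5} (compatibility of Kisin modules for different uniformizers), which asserts directly that the images of $W(R)\otimes_{\vphi,\mfS_L}\mfM$ and $W(R)\otimes_{\vphi,\mfS_{L'}}\mfN$ in $W(R)\otimes_{\mbb{Z}_p}T^{\vee}$ coincide. Once this is in hand, the rest of your argument goes through as written.
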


\begin{proof}
Let $T$ be an object of $\mrm{Rep}^{L\mathchar`-\mrm{st},r}_{\mbb{Z}_p}(G_K)$.
Let $L'$ be the Galois closure of $L$ over $K$
(and fix the choice of a uniformizer of $L'$ and a system of $p$-power roots of it;
see Remark \ref{importantremark}).
Since we have already shown Theorem \ref{thm1} for $\hat{T}_{L'/K}$,
we know that there exists a  
$(\vphi,\hat{G}_{L'},K)$-module $\hat{\mfM}'$
over $\mfS_{L'}$
such that $\hat{T}_{L'/K}(\hat{\mfM}')\simeq T$
as representations of $G_K$.
On the other hand, we have a unique $(\vphi,\hat{G}_L)$-module 
$\hat{\mfM}$ such that $T\simeq \hat{T}_L(\hat{\mfM})$
as representations of $G_L$ since $T$ is semi-stable over $L$.
We denote by $\mfM'$ and $\mfM$ 
underlying Kisin modules of $\hat{\mfM}'$ and $\hat{\mfM}$,
respectively.
By \cite[Theorem 3.2.1]{Li5} and Proposition \ref{comparison} (3),
the image of $W(R)\otimes_{\vphi,\mfS_{L'}} \mfM'$ under 
$\hat{\iota}_{L'/K}$ is equal to that of $W(R)\otimes_{\vphi,\mfS_L}\mfM$
under $\hat{\iota}_L$.
Hence we have a $\vphi$-equivariant isomorphism 
$W(R)\otimes_{\vphi,\mfS_{L'}} \mfM' \simeq W(R)\otimes_{\vphi,\mfS_L}\mfM$.
We define a $G_K$-action on $W(R)\otimes_{\vphi,\mfS_L}\mfM$
by this isomorphism. 
Then 
$\hat{\mfM}$ has a structure of 
$(\vphi,\hat{G}_L,K)$-module
over $\mfS_L$ so that
$\hat{\iota}_L\colon W(R)\otimes_{\vphi,\mfS_L}\mfM\hookrightarrow
W(R)\otimes_{\mbb{Z}_p} T^{\vee}$
is $G_K$-equivariant.
Since 
$\hat{T}_{L/K}(\hat{\mfM})
=\mrm{Hom}_{W(R),\vphi}(W(R)\otimes_{\vphi,\mfS_L}\mfM,W(R))
\simeq \mrm{Hom}_{W(R),\vphi}(W(R)\otimes_{\vphi,\mfS_{L'}}\mfM',W(R))
=\hat{T}_{L'/K}(\hat{\mfM}')=T$ 
as representations of $G_K$,
we have done.
\end{proof}


\subsection{$\mcal{C}^r_{m_0}\subset \mcal{C}^r$}
\label{3.2}

We prove the relation
$\mcal{C}^r_{m_0}\subset \mcal{C}^r$ in the assertion of Lemma \ref{Lem:Main1'}.
At first, fix the choices of a uniformizer $\pi_K$ of $K$ 
and a system $(\pi_{K,n})_{n\ge 0}$ of $p$-power roots of $\pi_K$,
and define notations $K_n, \mfS_K, \Mod^{r,\hat{G}_K}_{/\mfS_K},\dots $
with respect to them (see also Remark \ref{importantremark}).
We also consider notations 
$\mfS_{K_n}, S_{K_n},\dots $
with respect to the uniformizer $\pi_{K_n}:=\pi_{K,n}$ of $K_n$ 
and the system $(\pi_{K,n+m})_{m\ge 0}$ of $p$-power roots of $\pi_{K_n}$.
Note that we have $\mfS_K\subset \mfS_{K_n}$, $S_K\subset S_{K_n}$ and $E_{K_n}(u_{K_n})=E_K(u_K)$
with the relation
$u^{p^n}_{K_n}=u_K$.

To show the relation
$\mcal{C}^r_{m_0}\subset \mcal{C}^r$,
it follows from Lemma 2.1.15 of \cite{Ki} 
that it suffices to show the following.
\begin{proposition}
\label{Main2}
Let  $T$ be a free $\mbb{Z}_p$-representation of $G_K$
which is semi-stable over $K_n$ for some $n\le m_0$
and $T|_{G_{K_{\infty}}}\simeq T_{\mfS_K}(\mfM)$ for some $\mfM\in  \mrm{Mod}^r_{/\mfS_K}$.
Then there exists a (unique)  
weak $(\vphi,\hat{G}_K)$-module $\hat{\mfM}$ of height $\le r$ over $\mfS_K$
such that $\hat{T}_K(\hat{\mfM})\simeq T$.
\end{proposition}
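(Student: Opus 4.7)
The plan is to apply Theorem \ref{thm1} in the Galois setting. Since $n \le m_0$ forces $\zeta_{p^n} \in K$, the extension $L := K_n$ is a (cyclic) Galois extension of $K$, so Theorem \ref{thm1} supplies a $(\vphi, \hat{G}_L, K)$-module $\hat{\mfM}'$ over $\mfS_L$ with $\hat{T}_{L/K}(\hat{\mfM}') \simeq T$. The underlying Kisin module $\mfM'$ satisfies $T_{\mfS_L}(\mfM') \simeq T|_{G_{L_\infty}} = T|_{G_{K_\infty}} \simeq T_{\mfS_K}(\mfM)$ (using $L_\infty = K_\infty$), and the latter agrees via adjunction with $T_{\mfS_L}(\mfS_L \otimes_{\mfS_K} \mfM)$. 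By the full faithfulness of $T_{\mfS_L}$ (Proposition \ref{Kisin}), I may identify $\mfM' = \mfS_L \otimes_{\mfS_K} \mfM$, which yields inclusions $\hat{\mcal{R}}_K \otimes_{\vphi, \mfS_K} \mfM \hookrightarrow \hat{\mcal{R}}_L \otimes_{\vphi, \mfS_L} \mfM' \hookrightarrow W(R) \otimes_{\vphi, \mfS_L} \mfM'$, the last module being equipped with a $G_K$-action from the $(\vphi, \hat{G}_L, K)$-structure.

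The candidate weak $(\vphi, \hat{G}_K)$-module structure on $\mfM$ has underlying $\hat{G}_K$-action on $\hat{\mcal{R}}_K \otimes_{\vphi, \mfS_K} \mfM$ obtained by restricting this $G_K$-action. I must verify: (a) the $G_K$-action preserves $\hat{\mcal{R}}_K \otimes_{\vphi, \mfS_K} \mfM$; (b) it factors through $\hat{G}_K$; (c) $H_K$-invariance of $\mfM$; (d) continuity, $\hat{\mcal{R}}_K$-semilinearity, and compatibility with $\vphi$; (e) $\hat{T}_K(\hat{\mfM}) \simeq T$.

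The heart of the proof is (a). Choose an $\mfS_K$-basis $\hat{e}_1, \dots, \hat{e}_d$ of $\vphi^{\ast}\mfM$, which is simultaneously an $\mfS_L$-basis of $\vphi^{\ast}\mfM'$. By the explicit formula \eqref{action} from the proof of Lemma \ref{ess1}, for any $g \in G_K$,
$$
g(s(\hat{e}_1), \dots, s(\hat{e}_d)) = (s(\hat{e}_1), \dots, s(\hat{e}_d))\,\mrm{exp}(-\lambda_g \bar{N})\,A_g.
$$
Now $L/K$ is totally ramified, so $k_L = k$, and the remark following Lemma \ref{ess1} gives $A_g \in GL_d(W(k_L)) = GL_d(W(k)) \subset GL_d(\mfS_K)$. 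For $\lambda_g = \mrm{log}([g(\underline{\pi_L})/\underline{\pi_L}])$: since $\zeta_{p^n} \in K$, for $g \in G_K$ and each $j \ge 0$ the element $g(\pi_{K,n+j})/\pi_{K,n+j}$ is a $p^{n+j}$-th root of unity, so $g(\underline{\pi_L})/\underline{\pi_L} = \underline{\e}^{c(g)}$ for some $c(g) \in \mbb{Z}_p$; hence $\lambda_g = -c(g)\,t \in \mbb{Z}_p \cdot t$. Since $\bar{N}$ is nilpotent, $\mrm{exp}(-\lambda_g \bar{N})$ is a finite sum $\sum_{i \ge 0} c(g)^i\,t^i\,\bar{N}^i/i!$; after combining the divided powers of $t$ with the entries of $\bar{N}^i$ and using $\hat{\mcal{R}}_K = \mcal{R}_K \cap W(R)$, each entry lies in $\hat{\mcal{R}}_K$.

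For the remaining axioms: (b) follows because $\hat{K} = \hat{L}$ (as $K_\infty = L_\infty$ and $L \subset \hat{K}$), so the kernel of $G_K \twoheadrightarrow \hat{G}_K$ is $G_{\hat{K}} = G_{\hat{L}}$, which acts trivially because the $G_L$-action factors through $\hat{G}_L$. For (c), $H_K = G_{K_\infty} = G_{L_\infty} = H_L$, and the $H_L$-invariance of $\mfM' \supset \mfM$ (axiom (4) for the weak $(\vphi, \hat{G}_L)$-module $\hat{\mfM}'$) yields the required $H_K$-invariance. (d) transfers directly from the $(\vphi, \hat{G}_L, K)$-structure, and (e) is a direct unwinding of definitions together with $\hat{T}_{L/K}(\hat{\mfM}') \simeq T$. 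Uniqueness of $\hat{\mfM}$ is immediate from the full faithfulness of $\hat{T}_K$ (Theorem \ref{Thm:Liu}(3)). The principal obstacle is the containment analysis in step (a), where the hypothesis $n \le m_0$ enters essentially to force $\lambda_g$ to be a $\mbb{Z}_p$-multiple of $t$ rather than involving more general elements of $R$.
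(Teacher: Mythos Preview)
Your overall strategy coincides with the paper's: apply Theorem~\ref{thm1} with $L=K_n$ (Galois over $K$ since $n\le m_0$), identify the underlying Kisin module with $\mfS_L\otimes_{\mfS_K}\mfM$, and then verify that the resulting $G_K$-action on $W(R)\otimes_{\vphi,\mfS_K}\mfM$ preserves $\whR_K\otimes_{\vphi,\mfS_K}\mfM$. Steps (b)--(e) are handled correctly. However, your execution of step (a) contains two genuine errors.

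First, your computation of $\lambda_g$ is wrong. For $g\in G_K$ one has $g(\underline{\pi_L})/\underline{\pi_L}=(\underline{\e}^{1/p^n})^{c(g)}$ with $c(g)\in\mbb{Z}_p$, and hence $\lambda_g=-c(g)\,t/p^n$; it lies in $p^{-n}\mbb{Z}_p\cdot t$, not in $\mbb{Z}_p\cdot t$. (Indeed, for $g\notin G_{K_n}$ the zeroth component of $g(\underline{\pi_L})/\underline{\pi_L}$ is a nontrivial $p^n$-th root of unity, so it cannot equal $\underline{\e}^{c}$ for any $c\in\mbb{Z}_p$, whose zeroth component is $1$.) Consequently your direct argument that the entries of $\exp(-\lambda_g\bar N)A_g$ lie in $\whR_K$ fails; you only get $\mcal{R}_K$.

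Second, formula~\eqref{action} gives the $G_K$-action on the basis $s(e_1),\dots,s(e_d)$ of $s(D)\subset\mcal{D}$, where $e_i$ is the image of $\hat e_i$ in $D$; your expression ``$s(\hat e_i)$'' conflates these. In general $s(e_i)\neq\hat e_i$, and the paper introduces the change-of-basis matrix $X\in GL_d(S_K[1/p])$ with $(\hat e_1,\dots,\hat e_d)=(s(e_1),\dots,s(e_d))X$, so that the matrix of $g$ on the $\hat e_i$ is $X^{-1}\exp(-\lambda_g\bar N)A_g\,g(X)$. The crucial point is that $X$, $g(X)$, $\exp(-\lambda_g\bar N)$, and $A_g$ all have entries in $\mcal{R}_K$ (using only $A_g\in GL_d(W(k)[1/p])$, not the integrality asserted in the Remark), so the product does too. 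One then intersects with the a priori containment in $W(R)\otimes_{\vphi,\mfS_K}\mfM$ coming from the $(\vphi,\hat G_L,K)$-structure to land in $\whR_K\otimes_{\vphi,\mfS_K}\mfM$. Your attempt to land directly in $\whR_K$ by invoking the unproved Remark and the incorrect $\lambda_g\in\mbb{Z}_p\cdot t$ does not go through; the paper's route via $\mcal{R}_K$ followed by intersection with $W(R)$ is what makes step (a) work.
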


\begin{proof}
Let $T, n$ and $\mfM$ be as in the statement.
Note that $K_n$ is a now Galois extension of $K$ for such $n$,
and note also that 
$\mfM_n:=\mfS_{K_n}\otimes_{\mfS_K}\mfM$ is a Kisin module of 
height $\le r$ over $\mfS_{K_n}$. 
By Theorem \ref{thm1},
there exists a $(\vphi,\hat{G}_{K_n},K)$-module $\hat{\mfN}$ over $\mfS_{K_n}$
such that $T\simeq \hat{T}_{K_n/K}(\hat{\mfN})$.
Denote by $\mfN$ the underlying Kisin module of $\hat{\mfN}$.
Since $T_{\mfS_{K_n}}(\mfM_n)$ is isomorphic to $T_{\mfS_{K_n}}(\mfN)$,
we may identify $\mfN$ with $\mfM_n$.
Thus $\mfM_n$ is equipped with a structure of a
$(\vphi,\hat{G}_{K_n},K)$-module $\hat{\mfM}_n$ over $\mfS_{K_n}$
such that $T\simeq \hat{T}_{K_n/K}(\hat{\mfM}_n)$.
Putting $\vphi^{\ast}\mfM=\mfS_K\otimes_{\vphi,\mfS_K}\mfM$,
we know that $G_K(\vphi^{\ast}\mfM)$ is contained in 
$W(R)\otimes_{\vphi,\mfS_{K_n}}\mfM_n=W(R)\otimes_{\vphi,\mfS_K}\mfM$.
We claim that $G_K(\vphi^{\ast}\mfM)$ is contained in $\mcal{R}_{K}\otimes_{\vphi,\mfS_K}\mfM$.
Admitting this claim, we see that $\mfM$ has a structure of a weak $(\vphi,\hat{G}_K)$-module
of height $\le r$ over $\mfS_K$
which corresponds to $T$, and hence we finish a proof.

Put $\mcal{D}_n=S_{K_n}[1/p]\otimes_{\vphi,\mfS_{K_n}}\mfM_n$ and
$\mcal{D}=S_K[1/p]\otimes_{\vphi,\mfS_K}\mfM$.
Let $\hat{e}_1,\dots ,\hat{e}_d$ be a $\mfS_K$-basis of $\vphi^{\ast}\mfM$,
which is an $S_{K_n}[1/p]$-basis of $\mcal{D}_n$ and an $S_K[1/p]$-basis of $\mcal{D}$.
Denote by $e_i$ the image  of $\hat{e}_i$ under the projection
$\mcal{D}\twoheadrightarrow  \mcal{D}/I_+S_K[1/p]=:D$.
Then $e_1,\dots ,e_d$ is a $W(k)[1/p]$-basis of $D$. 
By \cite[Proposition 6.2.1.1]{Br}, we have a unique $\vphi$-compatible 
section $s\colon D\hookrightarrow \mcal{D}$ of the projection 
$\mcal{D}\twoheadrightarrow D$.
Since $\mcal{D}=S_K[1/p]\otimes_{W(k)[1/p]} s(D)$,
there exists a matrix $X\in GL_d(S_K[1/p])$
such that 
$(\hat{e}_1,\dots ,\hat{e}_d)=(s(e_1),\dots ,s(e_d))X$.
Now we extend the $G_K$-action on $W(R)\otimes_{\vphi,\mfS_{K_n}} \mfM_n$
to $B^+_{\mrm{cris}}\otimes_{W(k)[1/p]} s(D)
=B^+_{\mrm{cris}}\otimes_{W(R)} (W(R)\otimes_{\vphi,\mfS_{K_n}} \mfM_n)$
by a natural way.
Take any $g\in G_K$ and put $\lambda_g=\mrm{log}([g(\underline{\pi_{K_n}})/\underline{\pi_{K_n}}])$.
We see that $\lambda_g$ is contained in $\mcal{R}_{K}$.
Recall that $K_n$ is now a totally ramified Galois extension over $K$. 
By (\ref{action}), 
we have $g(s(e_1),\dots ,s(e_d))=(s(e_1),\dots ,s(e_d))\mrm{exp}(-\lambda_g\bar{N})A_g$
for some nilpotent matrix $\bar{N}\in M_d(W(k)[1/p])$ and
some $A_g\in GL_d(W(k)[1/p])$. 
Therefore, we obtain 
$g(\hat{e}_1,\dots ,\hat{e}_d)
=(\hat{e}_1,\dots ,\hat{e}_d)X^{-1}\mrm{exp}(-\lambda_g\bar{N})A_gg(X)$.
Since the matrix $X^{-1}\mrm{exp}(-\lambda_g\bar{N})A_gg(X)$ has coefficients in 
$\mcal{R}_{K}$, we have done.
\end{proof}

\begin{remark}
We remark that, for any semi-stable $\mbb{Q}_p$-representation $V$ of $G_{K_n}$
with Hodge-Tate weights in $[0,r]$,
there exists a Kisin module $\mfM_n\in  \mrm{Mod}^r_{/\mfS_{K_n}}$ 
such that $V|_{G_{K_{\infty}}}$ is isomorphic to 
 $T_{\mfS_{K_n}}(\mfM_n)\otimes_{\mbb{Z}_p}\mbb{Q}_p$ (cf. \cite[Lemma 2.1.15]{Ki}).
The above proposition 
studies the case where 
$\mfM_n$  descends 
to a Kisin module over $\mfS_K$, but 
this condition is not always satisfied.
An example for this is given in the proof of Proposition \ref{prop:rem}.
\end{remark}


\subsection{$\mcal{C}^r \subset \mcal{C}^r_m$}
\label{3.3}

Next we prove the relation $\mcal{C}^r \subset \mcal{C}^r_m$
in the assertion of Lemma \ref{Lem:Main1'}.
The key for our proof is the following proposition.

\begin{proposition}
\label{Thm2}
The restriction functor 
$\mrm{Rep}_{\mbb{Q}_p}(G_K)\to \mrm{Rep}_{\mbb{Q}_p}(G_{K_n})$
induces an equivalence between the following categories:
\begin{itemize}
\item[--] The category of semi-stable $\mbb{Q}_p$-representations of $G_K$
      with Hodge-Tate weights in $[0,r]$.
\item[--] The category of semi-stable $\mbb{Q}_p$-representations $V$ of $G_{K_n}$
      with the property that $V|_{G_{K_{\infty}}}$ is isomorphic to 
      $T_{\mfS_K}(\mfM)\otimes_{\mbb{Z}_p}\mbb{Q}_p$ 
      for some $\mfM\in  \mrm{Mod}^r_{/\mfS_K}$. 
\end{itemize}
\end{proposition}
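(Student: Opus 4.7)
The plan is to treat full faithfulness and essential surjectivity separately. Since $\pi_{K,n}$ is a root of the Eisenstein polynomial $X^{p^n}-\pi_K$ over $K$, the extension $K_n/K$ is totally ramified, so full faithfulness of the restriction functor on semi-stable representations follows immediately from Proposition~\ref{totst}.

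For essential surjectivity, take $V$ in the target category. Because $K_n/K$ is totally ramified, $K_0:=W(k)[1/p]$ is the maximal unramified subfield of both $K$ and $K_n$; consequently, the filtered $(\vphi,N)$-module $D_n:=D_{\mrm{st},K_n}(V)$ has its underlying $(\vphi,N)$-module $D_n^{(0)}$ defined over $K_0$, with Hodge filtration living on $K_n\otimes_{K_0}D_n^{(0)}$. My plan is to produce a filtration on $K\otimes_{K_0}D_n^{(0)}$ whose base change to $K_n\otimes_{K_0}D_n^{(0)}$ recovers the given Hodge filtration; adjoining the existing $(\vphi,N)$-structure then yields a filtered $(\vphi,N)$-module $D$ over $K$ with $K_n\otimes_K D=D_n$. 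Since the sub-$(\vphi,N)$-modules coincide and the Hodge/Newton numbers are preserved under the totally ramified base change $K\to K_n$, admissibility of $D$ over $K$ is equivalent to admissibility of $D_n$ over $K_n$, so $D$ is admissible. The Colmez--Fontaine theorem then supplies a semi-stable $\mbb{Q}_p$-representation $V_0$ of $G_K$ with Hodge--Tate weights in $[0,r]$ satisfying $D_{\mrm{st},K}(V_0)=D$, whence $D_{\mrm{st},K_n}(V_0|_{G_{K_n}})=K_n\otimes_K D=D_n$ gives $V_0|_{G_{K_n}}\cong V$.

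The candidate filtration is supplied by the Kisin-module hypothesis. I would form the Breuil module $\mcal{D}:=S_K[1/p]\otimes_{\vphi,\mfS_K}\mfM$ with its natural filtration $\mrm{Fil}^{\bullet}\mcal{D}$, and take the filtration induced on the quotient $\mcal{D}/I_+S_K[1/p]\mcal{D}$, which is canonically $K\otimes_{K_0}D_n^{(0)}$. Using $T_{\mfS_{K_n}}(\mfS_{K_n}\otimes_{\mfS_K}\mfM)=T_{\mfS_K}(\mfM)\cong V|_{G_{K_\infty}}$ together with the full faithfulness of $T_{\mfS_{K_n}}$ from Proposition~\ref{Kisin}, the Kisin module of $V|_{G_{K_\infty}}$ over $\mfS_{K_n}$ is (rationally) identified with $\mfS_{K_n}\otimes_{\mfS_K}\mfM$, so the Breuil module associated with $V$ is $\mcal{D}_n=S_{K_n}[1/p]\otimes_{S_K[1/p]}\mcal{D}$; its filtration — which matches the Hodge filtration of $D_n$ by Kisin's comparison between Breuil modules and $D_{\mrm{st}}$ — is manifestly the base change of $\mrm{Fil}^{\bullet}\mcal{D}$, completing the descent.

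The main obstacle I anticipate is the compatibility bookkeeping around this identification: verifying $\mrm{Fil}^{\bullet}\mcal{D}_n=S_{K_n}[1/p]\otimes_{S_K[1/p]}\mrm{Fil}^{\bullet}\mcal{D}$ and that the induced filtration on $K_n\otimes_{K_0}D_n^{(0)}$ really agrees with the Hodge filtration of $D_{\mrm{st},K_n}(V)$. This should follow from the intrinsic construction of the Breuil-module filtration via the Eisenstein polynomial, facilitated by the identity $E_{K_n}(u_{K_n})=E_K(u_K)$ noted in the paper, together with Kisin's reconstruction theorem, but requires careful diagram-chasing.
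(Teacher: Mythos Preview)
Your proposal follows essentially the same route as the paper: reduce to descending the Hodge filtration from $K_n$ to $K$ using the Kisin module $\mfM$ over $\mfS_K$, then invoke admissibility and Colmez--Fontaine. The paper carries this out by working integrally with $\mcal{M}=S_K\otimes_{\vphi,\mfS_K}\mfM$ and $\mcal{M}_n=S_{K_n}\otimes_{\vphi,\mfS_{K_n}}\mfM_n$, proving $S_{K_n}\otimes_{S_K}\mrm{Fil}^i\mcal{M}\simeq\mrm{Fil}^i\mcal{M}_n$ via an explicit basis decomposition $\mrm{Fil}^iS_{K_n}=\bigoplus_{j=0}^{p^n-1}u_{K_n}^j\,\mrm{Fil}^iS_K$ (Lemmas~\ref{exercise1} and~\ref{exercise2}); the identification of the Kisin-module filtration on $\mcal{D}_n$ with the Breuil/Hodge filtration is supplied by \cite[Corollary~3.2.3]{Li2}. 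These are exactly the two points you flag as your ``main obstacle,'' so your outline is on target.

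Two small corrections to your write-up. First, $\mcal{D}=S_K[1/p]\otimes_{\vphi,\mfS_K}\mfM$ is not a priori a Breuil module over $S_K$ (that would require $V$ to be semi-stable over $K$, which is what you are proving); its filtration must be defined directly from $\mfM$ via $\mrm{Fil}^i\mcal{M}=\{x:(1\otimes\vphi_\mfM)(x)\in\mrm{Fil}^iS_K\otimes_{\mfS_K}\mfM\}$, as the paper does. Second, the filtration lives on $\mcal{D}/\mrm{Fil}^1S_K[1/p]\,\mcal{D}\simeq K\otimes_{K_0}D_n^{(0)}$, not on $\mcal{D}/I_+S_K[1/p]\,\mcal{D}$; the latter is only a $K_0$-vector space and carries $(\vphi,N)$ rather than the Hodge filtration.
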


The result below immediately follows from the above proposition.

\begin{corollary}
\label{Main3}
Let $T$ be a free $\mbb{Z}_p$-representation of $G_K$
which is semi-stable over $K_n$
for some $n\ge 0$. 
Then the following conditions are equivalent:
\begin{itemize}
\item[--]  $T|_{G_{K_{\infty}}}$ is isomorphic to 
      $T_{\mfS_K}(\mfM)$ for some $\mfM\in  \mrm{Mod}^r_{/\mfS_K}$. 
\item[--] There exists a semi-stable $\mbb{Q}_p$-representation $V$ of $G_K$
          with Hodge-Tate weights in $[0,r]$ such that 
          $T\otimes_{\mbb{Z}_p}\mbb{Q}_p$ is isomorphic to $V$ as representations 
          of $G_{K_{n'}}$ for some $n'\ge 0$.
\end{itemize}
\end{corollary}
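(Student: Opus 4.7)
The plan is to deduce Corollary \ref{Main3} directly from Proposition \ref{Thm2} combined with the lattice version of \cite[Lemma 2.1.15]{Ki}; both implications become routine bookkeeping once the descent of semi-stability provided by Proposition \ref{Thm2} is in hand.

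For the forward implication, I would assume $T|_{G_{K_\infty}} \simeq T_{\mfS_K}(\mfM)$ for some $\mfM \in \mrm{Mod}^r_{/\mfS_K}$ and set $V_n := T \otimes_{\mbb{Z}_p} \mbb{Q}_p$ viewed as a representation of $G_{K_n}$. By hypothesis $V_n$ is semi-stable, and its restriction to $G_{K_\infty}$ is isomorphic to $T_{\mfS_K}(\mfM) \otimes_{\mbb{Z}_p} \mbb{Q}_p$, so $V_n$ lies in the second category of Proposition \ref{Thm2}. The equivalence there then produces a semi-stable $\mbb{Q}_p$-representation $V$ of $G_K$ with Hodge-Tate weights in $[0,r]$ satisfying $V|_{G_{K_n}} \simeq V_n = T \otimes \mbb{Q}_p$; taking $n' = n$ gives the desired $V$.

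For the reverse implication, I would start from a semi-stable $V$ of $G_K$ with Hodge-Tate weights in $[0,r]$ and an isomorphism $V|_{G_{K_{n'}}} \simeq T \otimes \mbb{Q}_p$ of $G_{K_{n'}}$-representations. Apply \cite[Lemma 2.1.15]{Ki} to $V$ to obtain a Kisin module $\mfN \in \mrm{Mod}^r_{/\mfS_K}$ with $V|_{G_{K_\infty}} \simeq T_{\mfS_K}(\mfN) \otimes_{\mbb{Z}_p} \mbb{Q}_p$; since $G_{K_\infty} \subset G_{K_{n'}}$, it follows that $T|_{G_{K_\infty}}$ sits inside $T_{\mfS_K}(\mfN) \otimes_{\mbb{Z}_p} \mbb{Q}_p$ as a $G_{K_\infty}$-stable $\mbb{Z}_p$-lattice. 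A standard integral refinement of Kisin's classification then produces $\mfM \in \mrm{Mod}^r_{/\mfS_K}$ with $T_{\mfS_K}(\mfM) \simeq T|_{G_{K_\infty}}$, as required. There is no real obstacle in the derivation of the corollary: the entire nontrivial content has been concentrated in Proposition \ref{Thm2}, and the only minor subtlety is the passage from isogeny classes to integral lattices in the reverse direction.
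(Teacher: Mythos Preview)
Your proof is correct and follows essentially the same route the paper has in mind: the paper simply states that the corollary ``immediately follows from the above proposition'' (Proposition~\ref{Thm2}), and you have spelled out exactly those immediate steps --- using Proposition~\ref{Thm2} for the forward implication and Kisin's lattice classification \cite[Lemma~2.1.15]{Ki} for the reverse. The only cosmetic point is that in the reverse direction you can invoke the integral form of Kisin's result directly (any $G_{K_\infty}$-stable lattice in a semi-stable representation with Hodge--Tate weights in $[0,r]$ arises as $T_{\mfS_K}(\mfM)$), rather than first passing through the isogeny class via $\mfN$; but this is just a shortcut, not a different argument.
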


\begin{remark}
\label{Rem:func}
In the statement of Corollary \ref{Main3},
we can always choose $n'$ to be $n$.
In addition, for a given $T$, $V$ is uniquely determined up to isomorphism.
Furthermore, the association $T\mapsto V$ is functorial. 
These follow from Proposition \ref{totst}.
\end{remark}

Combining this corollary with Theorem \ref{Thm:Liu} (3) and 
Remark \ref{Rem:Liu}, 
we obtain the desired relation $\mcal{C}^r \subset \mcal{C}^r_m$.
Therefore, it suffices to show Proposition \ref{Thm2}.
We begin with the following two lemmas.

\begin{lemma}
\label{exercise1}
For any $i\ge 0$,
we have a canonical decomposition
$$
\mrm{Fil}^iS_{K_n}=\bigoplus^{p^n-1}_{j=0}u^j_{K_n}\mrm{Fil}^iS_K. 
$$
\end{lemma}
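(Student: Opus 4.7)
The plan is to prove the decomposition at the level of the full ring $S_{K_n}$ first, and then show that the filtration respects the grading by the powers of $u_{K_n}$.

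For the ring-level decomposition, I would exploit the identities recorded at the start of \S 3.2: since $K_n = K(\pi_{K,n})$ is totally ramified over $K$, the residue field is still $k$; and $u^{p^n}_{K_n} = u_K$, $E_{K_n}(u_{K_n}) = E_K(u_K)$. Thus $u_{K_n}$ is a root of the monic polynomial $X^{p^n}-u_K \in W(k)[u_K][X]$, giving the free module decomposition
$$
W(k)[u_{K_n}] = \bigoplus_{j=0}^{p^n-1} u_{K_n}^j \, W(k)[u_K].
$$
Since adjoining the divided powers $\frac{E_K(u_K)^i}{i!}$ amounts to adjoining further elements of $W(k)[u_K][1/p]$ to the base ring, this decomposition lifts to
$$
W(k)\!\left[u_{K_n}, \tfrac{E_K(u_K)^i}{i!}\right]_{i\ge 0} = \bigoplus_{j=0}^{p^n-1} u_{K_n}^j \, W(k)\!\left[u_K, \tfrac{E_K(u_K)^i}{i!}\right]_{i\ge 0}.
$$
Passing to $p$-adic completion, which commutes with finite direct sums, yields $S_{K_n} = \bigoplus_{j=0}^{p^n-1} u_{K_n}^j S_K$.

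Next I would transfer the decomposition to the filtration. The inclusion $\bigoplus_j u_{K_n}^j \mrm{Fil}^i S_K \subset \mrm{Fil}^i S_{K_n}$ is immediate from the definition, since each $\frac{E_K(u_K)^j}{j!}$ lies in $S_K$. For the reverse, observe that the algebraic ideal $J := S_{K_n}\cdot\{\frac{E_K(u_K)^j}{j!} : j \ge i\}$ decomposes, in the basis $\{u_{K_n}^j\}$, as $\bigoplus_{j=0}^{p^n-1} u_{K_n}^j\bigl(S_K\cdot\{\frac{E_K(u_K)^l}{l!}:l\ge i\}\bigr)$: expanding a generator $s\cdot\frac{E_K(u_K)^l}{l!}$ of $J$ via $s = \sum_j u_{K_n}^j b_j$ with $b_j \in S_K$ produces exactly this form. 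Hence $J \subset \bigoplus_j u_{K_n}^j \mrm{Fil}^i S_K$, and taking $p$-adic closure on both sides gives $\mrm{Fil}^i S_{K_n} \subset \bigoplus_j u_{K_n}^j \mrm{Fil}^i S_K$.

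The one point that needs care, and which I would flag as the main (mild) obstacle, is verifying that the right-hand side of the last inclusion is genuinely $p$-adically closed in $S_{K_n}$, so that taking closure on the left does not escape it. This follows because $S_{K_n} = \bigoplus_j u_{K_n}^j S_K$ is a \emph{finite} direct sum of $p$-adically complete $S_K$-submodules (so the direct sum topology agrees with the subspace topology from $S_{K_n}$), and each $\mrm{Fil}^i S_K$ is $p$-adically closed in $S_K$ by definition; thus $\bigoplus_j u_{K_n}^j \mrm{Fil}^i S_K$ is closed in the product, hence in $S_{K_n}$. Combining the two inclusions with the $S_K$-module direct sum established in the first paragraph yields the claimed canonical decomposition of $\mrm{Fil}^i S_{K_n}$.
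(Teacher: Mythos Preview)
Your argument is correct. The paper records the proof of this lemma simply as ``Exercise,'' so there is no approach to compare against; your write-up supplies exactly the routine verification the author intended the reader to carry out, and the point you flag about $p$-adic closedness of the graded pieces is handled properly.
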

\begin{proof}
Exercise.
\end{proof}

\begin{lemma}
\label{exercise2}
Let $\mfM$ be a Kisin module of height $\le r$ over $\mfS_K$.

\noindent
(1) $\mfM_n:=\mfS_{K_n}\otimes_{\mfS_K}\mfM$ is a Kisin module of height $\le r$ over $\mfS_{K_n}$
(with Frobenius $\vphi_{\mfM_n}:=\vphi_{\mfS_{K_n}}\otimes \vphi_{\mfM}$).

\noindent
(2) Let $\mcal{M}:=S_K\otimes_{\vphi,\mfS_K}\mfM$ and 
$\mcal{M}_n:=S_{K_n}\otimes_{\vphi,\mfS_K}\mfM=S_{K_n}\otimes_{\vphi,\mfS_{K_n}}\mfM_n$.
Define $\mrm{Fil}^i\mcal{M}
:=\{x\in \mcal{M} \mid (1\otimes \vphi_{\mfM})(x)\in \mrm{Fil}^iS_K\otimes_{\mfS_K} \mfM \}$
and 
$\mrm{Fil}^i\mcal{M}_n
:=\{x\in \mcal{M}_n \mid (1\otimes \vphi_{\mfM})(x)\in \mrm{Fil}^iS_{K_n}\otimes_{\mfS_{K}} \mfM\}
=\{x\in \mcal{M}_n \mid (1\otimes \vphi_{\mfM_n})(x)\in \mrm{Fil}^iS_{K_n}\otimes_{\mfS_{K_n}} \mfM_n \}$.
Then the natural isomorphism 
$S_{K_n}\otimes_{S_K}\mcal{M}\overset{\sim}{\rightarrow} \mcal{M}_n$
induces an isomorphism 
$S_{K_n}\otimes_{S_K}\mrm{Fil}^i\mcal{M}\overset{\sim}{\rightarrow} \mrm{Fil}^i\mcal{M}_n$.
\end{lemma}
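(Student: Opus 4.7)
The plan is to deduce both parts from the fact that $\mfS_{K_n}$ is free over $\mfS_K$ and $S_{K_n}$ is free over $S_K$, i.e. from the $i=0$ case of Lemma \ref{exercise1}. First I would observe that, since $K_n/K$ is totally ramified with $u_K=u_{K_n}^{p^n}$, $\mfS_{K_n}=W(k)[\![u_{K_n}]\!]$ is a free $\mfS_K$-module of rank $p^n$ with basis $1,u_{K_n},\dots ,u_{K_n}^{p^n-1}$; Lemma \ref{exercise1} at $i=0$ gives the parallel decomposition of $S_{K_n}$ over $S_K$. In particular both extensions are flat, which is the engine for what follows.

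For part (1), the $\vphi$-equivariance of $\mfS_K\hookrightarrow \mfS_{K_n}$ yields a canonical isomorphism
$\mfS_{K_n}\otimes_{\vphi,\mfS_{K_n}}\mfM_n\simeq \mfS_{K_n}\otimes_{\mfS_K}(\mfS_K\otimes_{\vphi,\mfS_K}\mfM)$
under which the structural map $1\otimes \vphi_{\mfM_n}$ becomes the base change of $1\otimes \vphi_{\mfM}$ along $\mfS_K\to\mfS_{K_n}$. By flatness its cokernel is $\mfS_{K_n}\otimes_{\mfS_K}\mrm{coker}(1\otimes \vphi_{\mfM})$, hence killed by $E_K(u_K)^r=E_{K_n}(u_{K_n})^r$. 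Freeness of $\mfM_n$ over $\mfS_{K_n}$ is immediate from the freeness of $\mfM$ over $\mfS_K$ and of $\mfS_{K_n}$ over $\mfS_K$, so $\mfM_n$ lies in $\mrm{Mod}^r_{/\mfS_{K_n}}$.

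For part (2), I first verify that the two candidate definitions of $\mrm{Fil}^i\mcal{M}_n$ coincide: the canonical identification $S_{K_n}\otimes_{\mfS_K}\mfM = S_{K_n}\otimes_{\mfS_{K_n}}\mfM_n$ sends $\mrm{Fil}^iS_{K_n}\otimes_{\mfS_K}\mfM$ onto $\mrm{Fil}^iS_{K_n}\otimes_{\mfS_{K_n}}\mfM_n$ (the submodule $\mrm{Fil}^iS_{K_n}$ is already $\mfS_{K_n}$-stable), and $1\otimes \vphi_{\mfM}$ matches $1\otimes \vphi_{\mfM_n}$ through the same $\vphi$-equivariance. The remaining claim is then a diagram chase: by definition $\mrm{Fil}^i\mcal{M}$ is the kernel of the composite
$\mcal{M}\xrightarrow{1\otimes \vphi_{\mfM}} S_K\otimes_{\mfS_K}\mfM \twoheadrightarrow (S_K/\mrm{Fil}^iS_K)\otimes_{\mfS_K}\mfM.$
Applying the exact functor $S_{K_n}\otimes_{S_K}-$ and using Lemma \ref{exercise1} to rewrite $S_{K_n}\otimes_{S_K}(S_K/\mrm{Fil}^iS_K)\simeq S_{K_n}/\mrm{Fil}^iS_{K_n}$, one obtains exactly the kernel describing $\mrm{Fil}^i\mcal{M}_n$.

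As the authors signal by labelling Lemma \ref{exercise1} as an exercise and Lemma \ref{exercise2} as preparatory, no serious obstacle is anticipated; the only care needed is bookkeeping with the Frobenius twist, i.e.\ making sure the identifications of tensor products commute with $\vphi_{\mfM}$ and with the filtrations. All of this is routine once the freeness of $\mfS_{K_n}/\mfS_K$ and $S_{K_n}/S_K$ supplied by Lemma \ref{exercise1} is in hand.
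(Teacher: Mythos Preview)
Your argument is correct. Both proofs rest on Lemma \ref{exercise1}, but the paper proves the nontrivial inclusion $\mrm{Fil}^i\mcal{M}_n\subset S_{K_n}\otimes_{S_K}\mrm{Fil}^i\mcal{M}$ by an explicit coordinate computation: fix an $\mfS_K$-basis $e_1,\dots,e_d$ of $\mfM$ with Frobenius matrix $A$, write $x=\sum_k a_k e_k^\ast\in\mrm{Fil}^i\mcal{M}_n$, expand each $a_k=\sum_j u_{K_n}^j a_k^{(j)}$ via Lemma \ref{exercise1}, and read off from the decomposition of $A\cdot{}^{\mrm t}(a_1,\dots,a_d)$ that each $x_{(j)}=\sum_k a_k^{(j)}e_k^\ast$ already lies in $\mrm{Fil}^i\mcal{M}$. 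Your route instead packages the same input abstractly: realise $\mrm{Fil}^i\mcal{M}$ as the kernel of $\mcal{M}\to (S_K/\mrm{Fil}^iS_K)\otimes_{\mfS_K}\mfM$ and base-change along the flat map $S_K\to S_{K_n}$, using Lemma \ref{exercise1} only to identify $S_{K_n}\otimes_{S_K}(S_K/\mrm{Fil}^iS_K)\simeq S_{K_n}/\mrm{Fil}^iS_{K_n}$. This avoids choosing a basis and makes the role of flatness transparent; the paper's version is more hands-on but amounts to unwinding the same exact sequence in coordinates.
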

\begin{proof}
The assertion (1) follows immediately 
by the relation $E_K(u_K)=E_{K_n}(u_{K_n})$.
In the rest of this proof we identify
$S_{K_n}\otimes_{S_K}\mcal{M}$ with $\mcal{M}_n$
by a natural way.
We show that 
$S_{K_n}\otimes_{S_K}\mrm{Fil}^i\mcal{M}=\mrm{Fil}^i\mcal{M}_n$.
The inclusion $S_{K_n}\otimes_{S_K}\mrm{Fil}^i\mcal{M}\subset \mrm{Fil}^i\mcal{M}_n$
follows from an easy calculation.
We have to prove the opposite inclusion.
Let $e_1,\dots ,e_d$ be an $\mfS_K$-basis of $\mfM$ and define a matrix $A\in M_d(\mfS_K)$
by
$\vphi_{\mfM}(e_1,\dots ,e_d)=(e_1,\dots ,e_d)A$.
We put $e^{\ast}_i=1\otimes e_i\in \vphi^{\ast}\mfM$ for any $i$.
Then $e^{\ast}_1,\dots ,e^{\ast}_d$ is an $S_{K_n}$-basis of $\mcal{M}_n$.
Take $x=\sum^d_{k=1}a_ke^{\ast}_k\in \mrm{Fil}^i\mcal{M}_n$ with $a_k\in S_{K_n}$.
Since $(1\otimes \vphi_{\mfM})(x)$ is contained in 
$\mrm{Fil}^iS_{K_n}\otimes_{\mfS} \mfM$,
we see that the matrix
$$
X:=A\begin{pmatrix}
a_1\\\rotatebox{90}{\dots}\\ a_d
\end{pmatrix}
$$
has coefficients in $\mrm{Fil}^iS_{K_n}$.
By Lemma \ref{exercise1},
each $a_k$ can be decomposed as 
$\sum^{p^n-1}_{j=0}u^j_{K_n}a^{(j)}_k$
for some $a^{(j)}_k\in S_K$.
Writing $A=(a_{lk})_{l,k}$ and $X={}^{\mrm{t}}(x_1,\dots ,x_d)$,
we have 
$$
x_l=\sum^d_{k=1}a_{lk}a_k=\sum^{p^n-1}_{j=0}u^j_{K_n}\sum^d_{k=1} a_{lk}a_k^{(j)}.
$$
By Lemma \ref{exercise1} again,
we obtain that $\sum^d_{k=1} a_{lk}a_k^{(j)}\in \mrm{Fil}^iS_K$.
If we put $x_{(j)}=\sum^d_{k=1}a_k^{(j)}e^{\ast}_k\in \mcal{M}$,
we have 
$$
(1\otimes \vphi_{\mfM})(x_{(j)})=\sum^d_{l=1}(\sum^d_{k=1} a_{lk}a_k^{(j)})e_l,
$$
which is contained in 
$\mrm{Fil}^iS_K\otimes_{\mfS_K}\mfM$.
Therefore, each $x_{(j)}$ is contained in $\mrm{Fil}^i\mcal{M}$.
Since $x=\sum^{p^n-1}_{j=0}u^j_{K_n}x_{(j)}$,
we obtain the fact that $x$ is contained in $S_{K_n}\otimes_{S_K}\mrm{Fil}^i\mcal{M}$.
\end{proof}

We proceed a proof of Proposition \ref{Thm2}. 
For simplicity,
we denote by $\mbf{R}_1$ (resp.\ $\mbf{R}_2$) 
the former (resp.\ latter) 
category appeared in the statement of Proposition \ref{Thm2}.
It is well-known  (cf. \cite[Lemma 2.1.15]{Ki}) that the essential image of $\mbf{R}_1$
under the restriction functor 
$\mrm{Rep}_{\mbb{Q}_p}(G_K)\to \mrm{Rep}_{\mbb{Q}_p}(G_{K_n})$
is contained in $\mbf{R}_2$.
Furthermore, 
the restriction functor $\mbf{R}_1\to \mbf{R}_2$ 
is fully faithful
since $K_n$ is totally ramified over $K$.
Thus it suffices to show the essential surjectivity
of the restriction functor $\mbf{R}_1\to \mbf{R}_2$.
Let $V$ be a semi-stable $\mbb{Q}_p$-representations $V$ of $G_{K_n}$
with the property that $V|_{G_{K_{\infty}}}$ is isomorphic to 
$T_{\mfS_K}(\mfM)\otimes_{\mbb{Z}_p}\mbb{Q}_p$ 
for some $\mfM\in  \mrm{Mod}^r_{/\mfS_K}$.
Set $T:=T_{\mfS_K}(\mfM)$ and 
take any $G_{K_n}$-stable $\mbb{Z}_p$-lattice $T'$ in $V$ such that $T\subset T'$. 
There exists a $(\vphi,\hat{G}_{K_n})$-module $\hat{\mfN}$ 
of height $\le r$ over $\mfS_{K_n}$
such that $T'\simeq \hat{T}_{K_n}(\hat{\mfN})$.
Put $\mfM_n=\mfS_{K_n}\otimes_{\mfS_K}\mfM$, 
which is a Kisin module of height $\le r$ over $\mfS_{K_n}$. 
Since the functor 
$T_{\mfS_{K_n}}$ from $\mrm{Mod}^r_{/\mfS_{K_n}}$ 
into $\mrm{Rep}_{\mbb{Z}_p}(G_{K_{\infty}})$
is fully faithful,
we obtain a morphism $\mfN\to \mfM_n$
which corresponds to the inclusion map $T\hookrightarrow T'$.
We note that it is injective and its cokernel $\mfM_n/\mfN$ 
is killed by a power of $p$
since $T'/T$ is $p$-power torsion.
Set 
$\mcal{D}_n
:=S_{K_n}[1/p]\otimes_{\mfS_{K_n}}\mfN \simeq S_{K_n}[1/p]\otimes_{\mfS_{K_n}}\mfM_n$,
$\mcal{D}:=S_K[1/p]\otimes_{\mfS_K}\mfM$,
$\mcal{N}:=S_{K_n}\otimes_{\mfS_{K_n}}\mfN$,
$\mcal{M}_n:=S_{K_n}\otimes_{\mfS_{K_n}}\mfM_n$ and 
$\mcal{M}:=S_K\otimes_{\mfS_K}\mfM$.
We define filtrations  
$\mrm{Fil}^i\mcal{N}$, 
$\mrm{Fil}^i\mcal{M}_n$ and 
$\mrm{Fil}^i\mcal{M}$
as Lemma \ref{exercise2} (2). 
Note that $\mcal{D}_n$
has a structure of a Breuil module which corresponds to $V$.
In particular, we have a Frobenius $\vphi_{\mcal{D}_n}$, a monodromy operator $N_{\mcal{D}_n}$
and a decreasing filtration $(\mrm{Fil}^i\mcal{D}_n)_{i\in \mbb{Z}}$ 
on $\mcal{D}_n$.
It is a result of \cite[\S 6]{Br} that we can equip
$D:=\mcal{D}_n/I_+S_{K_n}[1/p]\mcal{D}_n$
with a structure of filtered $(\vphi,N)$-module over $K_n$ which corresponds to $V$.
Now we recall the definition of this structure and also define some additional notations 
for later use.
The Frobenius $\vphi_D$ and the monodromy $N_D$ on $D$ is defined by
$\vphi_D:=\vphi_{\mcal{D}_n}\ \mrm{mod}\ I_+S_{K_n}[1/p]\mcal{D}_n$
and $N_D:=N_{\mcal{D}_n}\ \mrm{mod}\ I_+S_{K_n}[1/p]\mcal{D}_n$.
We denote by $f_{\pi_n}$ and $f_{\pi}$
the natural projections 
$\mcal{D}_n\twoheadrightarrow \mcal{D}_n/\mrm{Fil}^1S_{K_n}\mcal{D}_n$ 
and $\mcal{D}\twoheadrightarrow \mcal{D}/\mrm{Fil}^1S_K\mcal{D}$,
respectively.
There is a unique $\vphi$-compatible section $s\colon D\hookrightarrow \mcal{D}$
of the projection $\mcal{D}\twoheadrightarrow \mcal{D}/ I_+S_K[1/p]\mcal{D}\simeq D$.
Note that the composite $D\overset{s}{\hookrightarrow} \mcal{D}\hookrightarrow \mcal{D}_n$,
which is also denoted by $s$, is a section of the projection 
$\mcal{D}_n\twoheadrightarrow \mcal{D}_n/ I_+S_{K_n}[1/p]\mcal{D}_n=D$.
Since the composite 
$D\overset{s}{\to} \mcal{D}_n\overset{f_{\pi_n}}{\to} \mcal{D}_n/\mrm{Fil}^1S_{K_n}\mcal{D}_n$
(resp.\ $D\overset{s}{\to} \mcal{D}\overset{f_{\pi}}{\to} \mcal{D}/\mrm{Fil}^1S_K\mcal{D}$)
maps a basis of $D$ to a basis of 
$\mcal{D}_n/\mrm{Fil}^1S_{K_n}\mcal{D}_n$
(resp.\ $\mcal{D}/\mrm{Fil}^1S_K\mcal{D}$),
we obtain an isomorphism 
$D_{K_n}:=K_n\otimes_{W(k)[1/p]} D\overset{\sim}{\rightarrow} 
\mcal{D}_n/\mrm{Fil}^1S_{K_n}\mcal{D}_n$
(resp.\ $D_K:=K\otimes_{W(k)[1/p]} D\overset{\sim}{\rightarrow} 
\mcal{D}/\mrm{Fil}^1S_K\mcal{D}$).
By this isomorphism,
we identify $D_{K_n}$ (resp.\ $D_K$)
with $\mcal{D}_n/\mrm{Fil}^1S_{K_n}\mcal{D}_n$ (resp.\ $\mcal{D}/\mrm{Fil}^1S_K\mcal{D}$).
Then the filtration $(\mrm{Fil}^iD_{K_n})_{i\in \mbb{Z}}$ on 
$D$ over $K_n$ is given by
$\mrm{Fil}^iD_{K_n}=f_{\pi_n}(\mrm{Fil}^i\mcal{D}_n)$. 
We note that the filtered $(\vphi,N)$-module $D$ over $K_n$ defined above
is weakly admissible since $V$ is semi-stable
(see \cite[\S 3.4]{CF} for the definition of weakly admissibility).

Let $\cO_{K}$ and $\cO_{K_n}$ be rings of integers of $K$ and $K_n$, respectively.
We note that there exists a canonical isomorphism
$K_n\otimes_{\cO_{K_n}} f_{\pi_n}(\mrm{Fil}^i\mcal{M}_n)
\simeq K_n\otimes_{\cO_{K_n}} f_{\pi_n}(\mrm{Fil}^i\mcal{N})$ since
we have $p^c\mrm{Fil}^i\mcal{M}_n\subset \mrm{Fil}^i\mcal{N}\subset \mrm{Fil}^i\mcal{M}_n$
as submodules of $\mcal{D}_n$,
where $c\ge 0$ is an integer such that 
$\mfM_n/\mfN$ is killed by $p^c$.
On the other hand,
the canonical isomorphism 
$S_{K_n}[1/p]\otimes_{S_K[1/p]} \mcal{D}
\simeq  \mcal{D}_n$ induces an isomorphism
$S_{K_n}\otimes_{S_K} \mrm{Fil}^i\mcal{M}
\simeq  \mrm{Fil}^i\mcal{M}_n$
(cf.\ Lemma \ref{exercise2} (2)), 
and it gives an isomorphism
$\cO_{K_n}\otimes_{\cO_K}f_{\pi}(\mrm{Fil}^i\mcal{M})\simeq 
f_{\pi_n}(\mrm{Fil}^i\mcal{M}_n)$.
Furthermore, it follows from 
\cite[Corollary 3.2.3]{Li2} 
that a natural isomorphism 
$\mcal{N}[1/p]\simeq \mcal{D}_n$
preserves filtrations, where $\mrm{Fil}^i(\mcal{N}[1/p]):=
(\mrm{Fil}^i\mcal{N})[1/p]$.
This induces
$K_n\otimes_{\cO_{K_n}} f_{\pi_n}(\mrm{Fil}^i\mcal{N})\simeq 
\mrm{Fil}^iD_{K_n}$.
(Here, we remark that the argument of \S 3.2 of {\it loc.\ cit.} proceeds even for $p=2$.)
Therefore, if we define a decreasing filtration $(\mrm{Fil}^iD_K)_{i\in \mbb{Z}}$ on $D_K$ by
$\mrm{Fil}^iD_K:=K\otimes_{\cO_K} f_{\pi}(\mrm{Fil}^i\mcal{M})$,
then we have a canonical isomorphism
\begin{equation}
\label{canonical}
K_n\otimes_K \mrm{Fil}^iD_K
\simeq \mrm{Fil}^iD_{K_n}.
\end{equation}
Note that 
we know $D_K=\mrm{Fil}^0D_K\supset \mrm{Fil}^1D_K
\supset \cdots \supset \mrm{Fil}^{r+1}D_K=0$.
Now we recall that 
$D$ is weakly admissible as a filtered $(\vphi,N)$-module over $K_n$.
It follows from (\ref{canonical})
that $D$ is also weakly admissible
as a filtered $(\vphi,N)$-module over $K$,
and hence
the action of  $G_{K_n}$ on $V$ extends to $G_K$ 
so that it is semi-stable over $K$.
Therefore, we showed that the restriction functor $\mbf{R}_1\to \mbf{R}_2$ is essentially surjective and 
this finishes a proof of Proposition \ref{Thm2}.


\subsection{$\mcal{C}^r_{m_0}= \mcal{C}^r$}
\label{3.4}

Now we are ready to complete a proof of Theorem \ref{Main1'}.
We put $K_{p^{\infty}}=\bigcup_{i\ge 0} K(\zeta_{p^i})$ and 
$G_{p^{\infty}}=\mrm{Gal}(K_{\infty}K_{p^{\infty}}/K_{p^{\infty}})\subset \hat{G}_K$.
We fix a topological generator $\tau$ of
$G_{p^{\infty}}$.
We start with the following lemma.

\begin{lemma}
(1) The field $K_{p^{\infty}}\cap K_{\infty}$ coincides with $K$ or $K_1$.

\noindent
(2) If $(p,m_0)\not=(2,1)$, then $K_{p^{\infty}}\cap K_{\infty}=K$.

\noindent
(3) If $m\ge 2$, then $K_{p^{\infty}}\cap K_{\infty}=K$.
\end{lemma}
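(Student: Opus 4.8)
The plan is to analyze the ramification of the extension $K_{p^\infty}K_\infty/K_\infty$ and to pin down the intersection $K_{p^\infty}\cap K_\infty$ by degree counting. First I would observe that $K_\infty/K$ is a totally (wildly) ramified pro-$p$ extension, obtained by adjoining successive $p$-power roots of the uniformizer $\pi_K$, so any subextension of $K_\infty/K$ is also totally ramified over $K$; in particular $F:=K_{p^\infty}\cap K_\infty$ is totally ramified over $K$. On the other hand $F\subset K_{p^\infty}$, and $K_{p^\infty}/K$ is a subextension of the cyclotomic $\mathbb Z_p$-extension tower $K(\zeta_{p^i})/K$. For $p$ odd, $\mathrm{Gal}(K(\zeta_{p^\infty})/K(\zeta_p))\cong\mathbb Z_p$ and $K(\zeta_p)/K$ is tamely ramified of degree dividing $p-1$; for $p=2$ one argues similarly with $K(\zeta_4)$ in place of $K(\zeta_p)$. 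Since $F/K$ is a $p$-extension inside $K_{p^\infty}/K$, the tame part must be trivial, so $F$ is contained in the maximal pro-$p$ subextension, which is $K(\zeta_{p^\infty})/K(\zeta_{p^{m_0}})$ (of group $\mathbb Z_p$, using $\zeta_{p^{m_0}}\in K$) when $p$ is odd, and similarly for $p=2$. Hence $F$ is one of the layers $K(\zeta_{p^{m_0+j}})$, and the key point becomes: which of these layers can be totally ramified over $K$?

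For part (1): a layer $K(\zeta_{p^{m_0+j}})/K$ is totally ramified precisely when it is ramified at all; by the standard theory of local cyclotomic extensions, if $K(\zeta_{p^{m_0+1}})/K$ is ramified then every layer above it is totally ramified, while if $K(\zeta_{p^{m_0+1}})/K$ is unramified the whole pro-$p$ cyclotomic tower splits into an unramified part and (possibly) a ramified part, and in that case the \emph{totally} ramified layers inside are limited. The delicate bookkeeping—comparing $m_0$ with $m$ (Proposition \ref{m0=m}) and tracking when $\widehat{K^{\mathrm{ur}}}(\zeta_{p^i})/\widehat{K^{\mathrm{ur}}}$ becomes ramified—shows that $F$ can be at most $K(\zeta_{p^{m_0+1}})$, and only when $p=2$, $m_0=1$ can this layer $K(\zeta_4)$ actually lie in $K_\infty$; in general $F$ is forced down to $K$. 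Concretely, for $p$ odd, $[K(\zeta_{p^{m_0+1}}):K]=p$ and one checks this degree-$p$ extension is not a subextension of $K_\infty/K$ unless it equals $K$, because $K_\infty/K$ being generated by roots of $\pi_K$ has a very restricted set of degree-$p$ subextensions (only $K(\pi_{K,1})$), and $K(\zeta_{p^{m_0+1}})=K(\pi_{K,1})$ would force a strong compatibility between $\zeta_{p^{m_0+1}}$ and $\pi_{K,1}^{1/p}$ that fails; so $F=K$, giving (2) for $p$ odd, while for $p=2$ the borderline case $m_0=1$ (where $\zeta_4\in K$ but $\zeta_8\notin K$, and $K(\zeta_8)=K(\sqrt{2})$ type phenomena occur, i.e. $K(\zeta_8)$ can be generated by a square root related to $\pi_K$) is exactly where $K_1=K(\pi_{K,1})$ can contain $\zeta_8$, hence $F=K_1$.

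For part (3): if $m\ge 2$ then $\zeta_{p^2}\in K^{\mathrm{ur}}$, so $K(\zeta_{p^2})/K$ (and more) is unramified; this rules out the sole exceptional case. Indeed $m\ge 2$ together with the hypothesis forcing the exception, namely $(p,m_0)=(2,1)$, would require $\zeta_4\in K$ yet $K(\zeta_8)/K$ ramified of degree $2$ inside $K_\infty$, contradicting $m\ge 2$ which makes $K(\zeta_8)/K$ already contain a nontrivial unramified subextension; a short degree and ramification comparison eliminates it, so $F=K$.

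I expect the main obstacle to be part (1) in the case $p=2$: carefully verifying that $K(\zeta_4)$—and no larger cyclotomic layer—can sit inside the Kummer-type tower $K_\infty$, which rests on the special structure of $2$-adic cyclotomic extensions (where $K(\zeta_8)/K(\zeta_4)$ and square-root extensions of uniformizers can coincide), and on making the ``totally ramified subextension'' argument precise using the relation between $m_0$, $m$, and the ramification of $K(\zeta_{p^{m_0+1}})/K$ established in Proposition \ref{m0=m}. The odd-$p$ case and part (3) should then be comparatively routine degree and ramification bookkeeping.
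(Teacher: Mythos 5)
Your reduction "$F:=K_{p^{\infty}}\cap K_{\infty}$ must be one of the cyclotomic layers $K(\zeta_{p^{m_0+j}})$" fails precisely in the delicate cases this lemma is about, and there is a concrete counterexample. Take $K=\mathbb{Q}_2$ with $\pi=2$: here $\sqrt{2}=\zeta_8+\zeta_8^{-1}\in\mathbb{Q}_2(\zeta_8)$, so $K_1=\mathbb{Q}_2(\sqrt{2})\subset K_{p^{\infty}}\cap K_{\infty}$, hence $F=K_1=\mathbb{Q}_2(\sqrt{2})$, which is not $\mathbb{Q}_2(\zeta_{2^j})$ for any $j$ (those all contain $\zeta_4=i$, while $\mathbb{Q}_2(\sqrt 2)$ does not). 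The gap comes from the structural claim: for $p$ odd with $m_0=0$, $\mathrm{Gal}(K_{p^{\infty}}/K)$ has a nontrivial prime-to-$p$ quotient and its maximal pro-$p$ subfield is not $K(\zeta_{p^j})$; for $p=2$ with $\zeta_4\notin K$, $\mathrm{Gal}(K_{p^{\infty}}/K)$ is already pro-$2$ but not procyclic, so there are several quadratic subextensions and no canonical "layer." Related to this, you misread the definition of $m_0$ when $p=2$: since $\zeta_2=-1$ always lies in $K$, one has $m_0\ge 1$, and $m_0=1$ means $\zeta_4\notin K$ — not "$\zeta_4\in K,\ \zeta_8\notin K$" as you write. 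Consequently your argument for part (3) is aimed at the wrong situation: the exception $(p,m_0)=(2,1)$ has $\zeta_4\notin K$, and your deduction "$(p,m_0)=(2,1)$ would require $\zeta_4\in K$" is incorrect.

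For comparison, the paper obtains (1) and (2) by citing \cite[Lemma 5.1.2]{Li2} and \cite[Proposition 4.1.5]{Li3}, and proves (3) by a short explicit valuation argument: assuming $p=2$ and $F=K_1$, one shows $K_1(\zeta_{2^m})=K(\zeta_{2^{m+1}})$ (using that $K(\zeta_{2^\ell})/K(\zeta_{2^m})$ is cyclic because $m\ge 2$, hence has a unique quadratic subextension), writes $\pi_1=x\zeta_{2^{m+1}}+y$ with $x,y\in K(\zeta_{2^m})$, applies the nontrivial automorphism to force $y=0$, and then derives a contradiction from $v(\pi_1)=\tfrac12$ while $v(x)\in\mathbb{Z}$ since $K(\zeta_{2^m})/K$ is unramified. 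That concrete valuation computation is the actual content of (3) and does not appear in your sketch; the ramification bookkeeping you describe does not substitute for it.
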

\begin{proof}
The assertions (1) and (2)
are consequences of \cite[Lemma 5.1.2]{Li2} and  \cite[Proposition 4.1.5]{Li3},
and so it is enough to show (3). We may assume $p=2$.
Assume that $K_{p^{\infty}}\cap K_{\infty}\not=K$.
Then we have $K_{p^{\infty}}\cap K_{\infty}=K_1$ by (1).
Since $K_1$ is contained in $K_{p^{\infty}}$,
we have $K_1\subset K(\zeta_{2^{\ell}})$ for $\ell>m$ large enough.
Since $m\ge 2$, the extension $K(\zeta_{2^{\ell}})/K(\zeta_{2^m})$
is cyclic and thus there exists only one quadratic subextension in it.  
By definition of $m$, the extension $K(\zeta_{2^{m+1}})/K(\zeta_{2^m})$
is degree $2$. 
Since the extension $K_1/K$ is totally ramified but $K(\zeta_{2^m})/K$ is unramified,
we see that the extension $K_1(\zeta_{2^m})/K(\zeta_{2^m})$
is also degree $2$.
Therefore, we have
$K_1(\zeta_{2^m})=K(\zeta_{2^{m+1}})$, and then
we have $\pi_1=x\zeta_{2^{m+1}}+y$ with $x,y\in K(\zeta_{2^m})$.
Let $\sigma$
be a non-trivial element in $\mrm{Gal}(K(\zeta_{2^{m+1}})/K(\zeta_{2^m}))$.
We have 
$-\pi_1=\sigma(\pi_1)=x\sigma(\zeta_{2^{m+1}})+y=-x\zeta_{2^{m+1}}+y$.
Hence $\pi_1=x\zeta_{2^{m+1}}$ and we have $v(\pi_1)=v(x)$.
Here, $v$ is a valuation of $K(\zeta_{2^{m+1}})$
normalized by $v(K^{\times})=\mbb{Z}$, and we see $v(\pi_1)=1/2$.
Since the extension $K(\zeta_{2^m})/K$ is unramified,
we have $v(x)\in \mbb{Z}$ but this is a contradiction.
\end{proof}

If $(p,m_0)=(2,1)$ and $m=1$, 
we have $m_0=m$ and then Theorem \ref{Main1'} follows immediately
from Lemma \ref{Lem:Main1'}. 
Hence we may assume $(p,m_0)\not=(2,1)$ or $m\ge 2$.
Under this assumption, the above lemma implies 
$K_{p^{\infty}}\cap K_{\infty}=K$.
In particular,  we have $\hat{G}=G_{p^{\infty}} \rtimes H_K$
with the relation $g\sigma=\sigma^{\chi(g)}g$
for $g\in H_K$ and $\sigma\in G_{p^{\infty}}$.
Here, $\chi$ is the $p$-adic cyclotomic character.
Let $\hat{\mfM}=(\mfM,\vphi,\hat{G}_K)$ be an object of 
${}_{\mrm{w}}\Mod^{r,\hat{G}_K}_{/\mfS_K}$ and put $T=\hat{T}_K(\hat{\mfM})$.
Our goal is to show that $T$ is an object of $\mcal{C}^r_{m_0}$.
We put $\mcal{D}=S_{K}[1/p]\otimes_{\vphi,\mfS_K} \mfM$
and $D=\mcal{D}/I_+S_K[1/p]\mcal{D}$. 
Let $s\colon D\hookrightarrow \mcal{D}$ be a $\vphi$-equivariant 
$W(k)[1/p]$-linear section of the projection $\mcal{D}\twoheadrightarrow D$ 
as before, and take a basis $e_1,\dots ,e_d$ of $s(D)$.
In $\mcal{R}_K\otimes_{W(k)[1/p]} s(D)=\mcal{R}_K\otimes_{\vphi,\mfS_K} \mfM$,  
the $\tau$-action with respected to the basis $e_1,\dots ,e_d$ is given by 
$\tau(e_1,\dots e_d)=(e_1,\dots ,e_d)A(t)$
for some matrix $A(t)\in GL_d(W(k)[1/p][\![t]\!])$.
Moreover, we have 
$\hat{G}_K(s(D))\subset (\mcal{R}_K\cap W(k)[1/p][\![t]\!])\otimes_{W(k)[1/p]} s(D)$
by \cite[Lemma 7.1.3]{Li1}.
Here are two remarks.
The first one is that, the $a$-th power $A(t)^a$, a matrix with coefficients 
in $W(k)[1/p][\![t]\!]$, of $A(t)$ 
is well-defined for any $a\in \mbb{Z}_p$. 
This is because the Galois group
$G_{p^{\infty}}=\tau^{\mbb{Z}_p}
\subset \hat{G}_K$ acts continuously on  $\mcal{R}_K\otimes_{W(k)[1/p]} s(D)$.
The second one is that, for any $g\in H_K$, we have $A(\chi(g)t)=A(t)^{\chi(g)}$
by the relation $g\tau=\tau^{\chi(g)}g$. In particular,  we have 
\begin{equation}
\label{Keyrel}
A(0)^{\chi(g)-1}=I_d.
\end{equation}
\noindent
Here, $I_d$ is the identity matrix.
With these notation,
it follows from the second paragraph of the proof of  \cite[Theorem 4.2.2]{Li3} that
$T\otimes_{\mbb{Z}_p} \mbb{Q}_p$ is semi-stable over $K_{\ell}$
if $A(0)^{p^{\ell}}=I_d$.

\begin{lemma}
\label{lastlemma}
Let the notation be as above.
Then we have $A(0)^{p^{m_0}}=I_d$. 
\end{lemma}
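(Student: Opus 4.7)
The plan is to deduce the lemma from the relation (\ref{Keyrel}), $A(0)^{\chi(g)-1}=I_d$, combined with an analysis of the image $\chi(H_K) \subset \mbb{Z}_p^\times$. First, observe that
$$
\Sigma := \{a \in \mbb{Z}_p : A(0)^a = I_d\}
$$
is a closed additive subgroup of $\mbb{Z}_p$, since $a \mapsto A(0)^a$ is a continuous homomorphism into $GL_d(W(k)[1/p])$ (well-defined because $G_{p^{\infty}} = \tau^{\mbb{Z}_p}$ acts continuously on $\mcal{R}_K \otimes_{W(k)[1/p]} s(D)$, as recalled in the excerpt). Every closed additive subgroup of $\mbb{Z}_p$ is of the form $p^N\mbb{Z}_p$, so the conclusion $A(0)^{p^{m_0}} = I_d$ is equivalent to $p^{m_0} \in \Sigma$. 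By (\ref{Keyrel}), $\chi(g) - 1 \in \Sigma$ for every $g \in H_K$; hence it suffices to exhibit a single $g \in H_K$ with $v_p(\chi(g)-1) = m_0$, since then writing $\chi(g) - 1 = p^{m_0}u$ with $u \in \mbb{Z}_p^\times$ gives $p^{m_0} = u^{-1}(\chi(g)-1) \in \Sigma$ by closure under $\mbb{Z}_p$-scaling.

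Next, I would use the current assumption $K_{p^{\infty}} \cap K_{\infty} = K$ to identify $H_K$ with $\mrm{Gal}(K_{p^{\infty}}/K)$: the composite $H_K \hookrightarrow \hat{G}_K \twoheadrightarrow \mrm{Gal}(K_{p^{\infty}}/K)$ is an isomorphism, its kernel being exactly $G_{p^{\infty}}$. Through this isomorphism, $\chi|_{H_K}$ becomes the usual cyclotomic character on $\mrm{Gal}(K_{p^{\infty}}/K)$, and hence $\chi(H_K)$ coincides with $\mrm{Gal}(\mbb{Q}_p(\zeta_{p^{\infty}})/F)$, where $F := K \cap \mbb{Q}_p(\zeta_{p^{\infty}})$.

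Finally I would produce the desired Galois element. By maximality of $m_0$, $\zeta_{p^{m_0+1}} \notin K$, whence $\zeta_{p^{m_0+1}} \notin F$. The restriction map
$$
\mrm{Gal}(\mbb{Q}_p(\zeta_{p^{\infty}})/F) \twoheadrightarrow \mrm{Gal}(F(\zeta_{p^{m_0+1}})/F)
$$
therefore has non-trivial target, so we may choose $\sigma$ in the source mapping to a non-trivial element. Since $\zeta_{p^{m_0}} \in K$ forces $\zeta_{p^{m_0}} \in F$, this $\sigma$ fixes $\zeta_{p^{m_0}}$ but not $\zeta_{p^{m_0+1}}$; equivalently $\chi(\sigma) \equiv 1 \pmod{p^{m_0}}$ and $\chi(\sigma) \not\equiv 1 \pmod{p^{m_0+1}}$, so $v_p(\chi(\sigma)-1) = m_0$. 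Pulling $\sigma$ back to $H_K$ via the identification above finishes the proof.

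The main obstacle is ensuring the Galois-theoretic image computation goes through uniformly across all cases, in particular the edge cases $p$ odd with $m_0 = 0$ (for which Tamagawa's advice is acknowledged) and $p = 2$ with $m_0 = 1$, $m \ge 2$ (where $\mrm{Gal}(\mbb{Q}_2(\zeta_{2^{\infty}})/\mbb{Q}_2) = \mbb{Z}_2^\times$ is not pro-cyclic and $F$ could be a non-obvious quadratic subfield). However, both situations are absorbed into the uniform argument above, since all that is used is the non-containment $\zeta_{p^{m_0+1}} \notin F$ and the consequent non-triviality of the cyclotomic extension $F(\zeta_{p^{m_0+1}})/F$; no finer information about $F$ is needed.
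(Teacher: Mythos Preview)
Your proof is correct. Both your argument and the paper's rest on the same key relation $A(0)^{\chi(g)-1}=I_d$ for $g\in H_K$, but the paper proceeds by a four-way case split on $(p,m_0)$, in each case exhibiting a specific $g$ and manipulating the exponent by hand (e.g.\ for $p=2,\ m_0=1$ it picks $g$ with $\chi(g)=3+4x$ and uses that $1+2x\in\mbb{Z}_2^\times$ to extract $A(0)^2=I_d$). Your introduction of the closed subgroup $\Sigma=\{a\in\mbb{Z}_p:A(0)^a=I_d\}$ makes this $\mbb{Z}_p$-module structure explicit and reduces everything to finding a single $g$ with $v_p(\chi(g)-1)=m_0$, which you then do uniformly via the definition of $m_0$. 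The payoff is a cleaner, case-free argument; the paper's approach is more hands-on but requires no appeal to the structure of closed subgroups of $\mbb{Z}_p$.
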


\begin{proof}
First we consider the case where  $p$ is odd.
Since $H_K$ is canonically isomorphic to $\mrm{Gal}(K_{p^{\infty}}/K)$,
the image of the restriction to $H_K$ of
the $p$-adic cyclotomic character 
$\chi\colon \hat{G}_K\to \mbb{Z}_p^{\times}$
 is equal to  
$$
\chi(\hat{G}_K)=C\times (1+p^n\mbb{Z}_p) 
$$
\noindent
where $n$ is a positive integer and  
$C\simeq \mrm{Gal}(K(\zeta_p)/K)$ is a finite cyclic group of order prime-to-$p$.

\noindent
{\it The case where $m_0\ge 1$:} 
In this case, it is an easy exercise to check the equality $n=m_0$ and hence
we can choose $g\in H_K$ such that $\chi(g)=1+p^{m_0}$.
Thus the result follows by (\ref{Keyrel}).

\noindent
{\it The case where $m_0=0$:} 
In this case, $C$ is non-trivial and hence 
there exists an element $g\in H_K$
such that $x:=\chi(g)-1$ is a unit of $\mbb{Z}_p$.
By (\ref{Keyrel}), we have $A(0)^x=I_d$, and then 
we obtain $A(0)=I_d$.

Next we consider the case where $p=2$.

\noindent
{\it The case where $m_0\ge 2$:} 
This case is clear
since we have $\chi(H_K)=\chi(\hat{G}_K)=1+2^{m_0}\mbb{Z}_2$.

\noindent
{\it The case where $m_0=1$:} 
In this case, $\chi\ \mrm{mod}\ 4$ is not trivial.
Hence there exists 
$g\in H_K$ such that $\chi(g)=3+4x$ for some $x\in \mbb{Z}_2$.
By (\ref{Keyrel}),
we have $A(0)^{2+4x}=I_d$. 
Since $1+2x$ is a unit of $\mbb{Z}_2$,
this gives the desired equation $A(0)^2=I_d$.
\end{proof}

\noindent
By the above lemma, we obtain the fact that 
$T\otimes_{\mbb{Z}_p}\mbb{Q}_p$ is semi-stable over $K_{m_0}$.
On the other hand, we have already shown 
that $\mcal{C}^r$ is a subcategory of $\mcal{C}^r_{m}$.
Thus there exists a semi-stable $\mbb{Q}_p$-representation $V$ of $G_K$
whose restriction to $G_{K_m}$ is isomorphic to $T\otimes_{\mbb{Z}_p} \mbb{Q}_p$. 
Moreover, Proposition \ref{totst} implies that $V$ and  $T\otimes_{\mbb{Z}_p} \mbb{Q}_p$ are isomorphic 
as representations of $G_{K_{m_0}}$
since they are semi-stable over $K_{m_0}$.
Therefore, we conclude that $T$ is an object of the category  $\mcal{C}^r_{m_0}$.
This is the end of  a proof of Theorem \ref{Main1'}. 


\subsection{Conclusions and more}

\subsubsection{}
We summarize our results here.
For  any finite extension $L/K$, 
we denote by $\mrm{Rep}^{r,L\mathchar`-\mrm{st}}_{\mbb{Z}_p}(G_K)$ the category
of free $\mbb{Z}_p$-representations $T$ of $G_K$
which is semi-stable over $L$ with Hodge-Tate weights in $[0,r]$.
We define $\mcal{C}^r_n$ to be the category of 
free $\mbb{Z}_p$-representations $T$ of $G_K$ which satisfies the following property:
there exists a semi-stable $\mbb{Q}_p$-representation $V$ of $G_K$
with Hodge-Tate weights in $[0,r]$
such that $T\otimes_{\mbb{Z}_p}\mbb{Q}_p$
is isomorphic to $V$ as representations of $G_{K_n}$. 
By definition $\mcal{C}^r_n$ is a full subcategory of  
$\mrm{Rep}^{r,K_n\mathchar`-\mrm{st}}_{\mbb{Z}_p}(G_K)$.
Put $m_0=\mrm{max}\{i\ge 0 \mid \zeta_{p^i}\in K \}$ and 
$m=\mrm{max}\{i\ge 0 \mid \zeta_{p^i}\in K^{\mrm{ur}} \}$.
We have  
$\mrm{Rep}^{r,K_m\mathchar`-\mrm{st}}_{\mbb{Z}_p}(G_K)
=\bigcup_{n\ge 0} \mrm{Rep}^{r,K_n\mathchar`-\mrm{st}}_{\mbb{Z}_p}(G_K)$,
$\mcal{C}^r_m=\bigcup_{n\ge 0} \mcal{C}^r_n$
(see Remark \ref{Rem:Liu}).
Results of \cite{Li3} and this note give the following diagram 
(here, ``$\subset$''  implies an inclusion):
\begin{center}
$\displaystyle \xymatrix{
& 
&  \mcal{C}^r_m   \ar^{\subset \quad \qquad}[r]
& \mrm{Rep}^{r,K_m\mathchar`-\mrm{st}}_{\mbb{Z}_p}(G_K)
\\ 
{}_{\mrm{w}}\Mod^{r,\hat{G}_K}_{/\mfS_K} 
\ar^{\sim}@{->}_{\hat{T}_K}[rr] 
&
& \mcal{C}^r_{m_0} \ar^{\cup}[u].  \ar^{\subset \quad \qquad}[r]
& \mrm{Rep}^{r,K_{m_0}\mathchar`-\mrm{st}}_{\mbb{Z}_p}(G_K)  \ar^{\cup}[u]
\\
\Mod^{r,\hat{G}_K}_{/\mfS_K}  
\ar^{\cup}[u] \ar^{\sim}_{\hat{T}_K}[rr] 
&
&
\mcal{C}^r_{0} \ar^{\cup}[u].  \ar@{=}[r]
&
\mrm{Rep}^{r,K\mathchar`-\mrm{st}}_{\mbb{Z}_p}(G_K) \ar^{\cup}[u]
}$
\end{center}

\subsubsection{}
We give a few remarks for the above diagram.
Clearly, all the categories in the middle and right vertical lines are same
if $m=0$.
On the other hand, if $m \ge 1$,
inclusion relations between them are described as follows:

\begin{proposition}
\label{prop:rem}
Suppose $m\ge 1$.

\noindent
(1) Suppose $1\le n\le m$.
Then the category  $\mcal{C}^r_n$ is strictly larger than 
$\mcal{C}^r_{n-1}$.
In particular, the category
$\mrm{Rep}^{r,K_{n}\mathchar`-\mrm{st}}_{\mbb{Z}_p}(G_K)$ 
is strictly larger than 
$\mrm{Rep}^{r,K_{n-1}\mathchar`-\mrm{st}}_{\mbb{Z}_p}(G_K)$.

\noindent
(2) Suppose $n,r\ge 1$.
Then the category
$\mrm{Rep}^{r,K_n\mathchar`-\mrm{st}}_{\mbb{Z}_p}(G_K)$ 
is strictly larger than 
$\mcal{C}^r_n$.

\noindent
(3) Suppose $n\ge 0$. 
Then we have 
$\mcal{C}^0_n=\mrm{Rep}^{0,K_n\mathchar`-\mrm{st}}_{\mbb{Z}_p}(G_K)$.
\end{proposition}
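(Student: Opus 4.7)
The plan is to dispatch the three parts separately, with parts (1) and (3) reduced to essentially explicit representations and part (2) requiring a genuine construction using Fontaine's theory.

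For part (3), the argument is that a semi-stable $\mbb{Q}_p$-representation of $G_{K_n}$ with Hodge-Tate weights in $[0,0]$ is automatically crystalline with trivial filtration, hence unramified. Since $K_n/K$ is totally ramified, its residue field $k$ is the same as that of $K$, and such an unramified representation is classified by a finite-dimensional $W(k)[1/p]$-vector space equipped with Frobenius. The same linear-algebra datum also defines an unramified $\mbb{Q}_p$-representation of $G_K$, which is semi-stable with Hodge-Tate weights in $[0,0]$ and whose restriction to $G_{K_n}$ recovers the original representation. Hence $\mrm{Rep}^{0,K_n\mathchar`-\mrm{st}}_{\mbb{Z}_p}(G_K)\subset\mcal{C}^0_n$, and the reverse inclusion is immediate.

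For (1), I would take $T_n:=\mrm{Ind}^{G_K}_{G_{K_n}}(\mbb{Z}_p)$, a free $\mbb{Z}_p$-representation of $G_K$ of rank $[K_n:K]$ whose action factors through $\mrm{Gal}(K_n^{\mrm{Gal}}/K)$ for $K_n^{\mrm{Gal}}=K(\pi_n,\zeta_{p^n})$. Because $\zeta_{p^n}\in K^{\mrm{ur}}$ (since $n\le m$), we have $K_n^{\mrm{Gal}}\subset K_n\cdot K^{\mrm{ur}}=K_n^{\mrm{ur}}$, so $T_n|_{G_{K_n}}$ is unramified; by the argument of (3) this places $T_n$ in $\mcal{C}^0_n\subset\mcal{C}^r_n$, and a fortiori in $\mrm{Rep}^{r,K_n\mathchar`-\mrm{st}}_{\mbb{Z}_p}(G_K)$. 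Conversely, $K_n/K_{n-1}$ is totally ramified of degree $p$, hence $\pi_n\notin K_{n-1}^{\mrm{ur}}$ and $K_n^{\mrm{Gal}}\not\subset K_{n-1}^{\mrm{ur}}$, so $T_n|_{G_{K_{n-1}}}$ is ramified. Since $T_n$ has finite $G_K$-image its Hodge-Tate weights are all $0$, so if $T_n$ belonged to $\mrm{Rep}^{r,K_{n-1}\mathchar`-\mrm{st}}_{\mbb{Z}_p}(G_K)$ or to $\mcal{C}^r_{n-1}$ then the matching semi-stable representation would itself have Hodge-Tate weights $0$ and hence be unramified, forcing $T_n|_{G_{K_{n-1}}}$ to be unramified --- a contradiction.

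For (2), via Fontaine's theory (or directly using Proposition~\ref{Thm2}), membership of $V$ in $\mcal{C}^r_n$ amounts to saying that the weakly admissible filtered $(\vphi,N)$-module $D_n$ attached to $V|_{G_{K_n}}$ is of the form $K_n\otimes_K D'$ for a weakly admissible filtered $(\vphi,N)$-module $D'$ over $K$. Since $K_n/K$ is totally ramified the underlying $(\vphi,N)$-module over $W(k)[1/p]$ is unchanged, so only the filtration can obstruct descent. I would construct an explicit two-dimensional example by taking $\vphi$ with characteristic polynomial $X^2-p$ (irreducible over $W(k)[1/p]$, which ensures weak admissibility automatically as there is no proper $(\vphi,N)$-stable subspace), $N=0$, and $\mrm{Fil}^1 D_n=K_n\cdot(e_0+\pi_n e_1)$, and enrich this with a compatible $\mrm{Gal}(K_n^{\mrm{Gal}}/K)$-action so that Fontaine's correspondence produces a pot.\ semi-stable $G_K$-representation $V$, semi-stable over $K_n$ with Hodge-Tate weights $\{0,1\}\subset[0,r]$, whose associated filtered module over $K_n$ is $D_n$. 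A direct computation then shows that $K_n\cdot(e_0+\pi_n e_1)$ is not the $K_n$-span of any $K$-line in $K\otimes_{W(k)[1/p]}D$, even after applying the (small) group of $(\vphi,N)$-automorphisms of $D$ over $W(k)[1/p]$, so $D_n$ fails to descend and $V\notin\mcal{C}^r_n$. The main obstacle in (2) will be producing the $\mrm{Gal}(K_n^{\mrm{Gal}}/K)$-equivariant filtration: when $k$ is small, the centralizer of $\vphi$ in $\mrm{GL}(D)$ admits few finite-order elements, and the module $D$ may need to be enlarged or the construction routed through the intermediate unramified extension $K(\zeta_{p^n})$ to accommodate a non-trivial Galois action compensating for the non-trivial $\sigma$-action on $\pi_n$.
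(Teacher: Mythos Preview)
Your arguments for (1) and (3) are correct. In fact your (3) is cleaner than the paper's: the paper argues via a case split (on whether the splitting field $K_T$ contains $\zeta_{p^n}$) and in the general case passes through an auxiliary induced representation and Corollary~\ref{Main3}, whereas your observation that ``semi-stable with Hodge--Tate weights $0$'' $\Rightarrow$ ``unramified'', together with $G_K/I_K = G_{K_n}/I_{K_n}$ (as $K_n/K$ is totally ramified), already produces the required semi-stable $V$ over $K$ directly. Your (1) is essentially the paper's argument with a cosmetic change in the chosen Artin representation; both have splitting field $K_n(\zeta_{p^n})$.

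Part (2), however, has a real gap which you yourself flag. For the $G_{K_n}$-representation attached to your two-dimensional $D_n$ (with $\vphi^2=p$, $N=0$, $\mrm{Fil}^1=K_n(e_0+\pi_n e_1)$) to extend to a $G_K$-representation that is still semi-stable over $K_n$, you need a semilinear action of $\mrm{Gal}(K_n/K)$ (assume $n\le m_0$ for simplicity) on $D_n$ commuting with $\vphi$ and preserving the filtration. A short computation shows this forces the generator $\sigma$ to act on $D$ by some $a+b\vphi$ with $a,b\in\mbb{Q}_p$, and comparing the $K$-coefficients of $1,\pi_n,\pi_n^2$ in the resulting equation gives $a=b=0$. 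So your specific $D_n$ admits \emph{no} descent datum at all, and hence does not produce an object of $\mrm{Rep}^{r,K_n\text{-}\mrm{st}}_{\mbb{Z}_p}(G_K)$. Enlarging $D$ or passing through $K(\zeta_{p^n})$ may well salvage the idea, but this is the heart of the construction and is not carried out.

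The paper avoids this descent problem entirely by working on the Kisin-module side. It writes down an explicit rank-$2$ Kisin module $\mfM$ over $\mfS_{L_1}$ with $L=K(\zeta_p)$, takes the associated lattice $T$ (which is crystalline over $L_1$), and sets $\tilde{T}=\mrm{Ind}^{G_K}_{G_{L_1}}T$; the $G_K$-action is then automatic. Since $L_1/K_1$ is unramified, $\tilde{T}$ is crystalline over $K_1$, hence lies in $\mrm{Rep}^{r,K_n\text{-}\mrm{st}}_{\mbb{Z}_p}(G_K)$. To show $\tilde{T}\notin\mcal{C}^r_n$ the paper invokes Corollary~\ref{Main3}: it suffices that $\tilde{T}|_{G_{K_\infty}}$ does not come from any Kisin module over $\mfS_K$, and this is exactly the content of \cite[Example 4.2.3]{Li3}. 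Thus the paper trades your filtered-module descent computation for a Kisin-module non-descent result already available in the literature.
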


\begin{proof}
(1) Let $T$ be the induced representation of the rank one trivial $\mbb{Z}_p$-representation
of $G_{K_n(\zeta_{p^n})}$ to $G_K$,
which is an Artin representation.
The splitting field of $T$ is $K_n(\zeta_{p^n})$.
Since $n\le m$,  the extension 
$K_n(\zeta_{p^n})/K_n$ is unramified. 
Thus $T$ is crystalline over $K_n$.
On the other hand,
$T$ is not crystalline over $K_{n-1}$ since the extension 
$K_n(\zeta_{p^n})/K_{n-1}$, the splitting field of $T|_{K_{n-1}}$, is not unramified.
(This finishes a proof of the latter assertion.)
Let $\rho_T\colon G_K\to GL_{\mbb{Z}_p}(T)\simeq GL_d(\mbb{Z}_p)$
be the continuous homomorphism associated with $T$, 
where $d$ is the $\mbb{Z}_p$-rank of $T$.
By the assumption $n\le m$,
we know that $K(\zeta_{p^n})\cap K_n=K$ and thus we can define 
a continuous homomorphism $\rho_{T'}\colon G_K\to GL_d(\mbb{Z}_p)$
by the composite 
$G_K\twoheadrightarrow \mrm{Gal}(K(\zeta_{p^n})/K)\simeq \mrm{Gal}(K_n(\zeta_{p^n})/K_n)
\overset{\rho_T}{\hookrightarrow}  GL_d(\mbb{Z}_p)$.
Let $T'$ be the free $\mbb{Z}_p$-module of rank $d$ equipped with a 
$G_K$-action by $\rho_{T'}$.
Then $T'$ is isomorphic to $T$ as representations of $G_{K_n}$
and furthermore it is crystalline over $K$.
It follows that  $T$ is an object of $\mcal{C}^r_n$.

\noindent
(2) Since $m\ge 1$,  we know that $L:=K(\zeta_p)$ is an unramified extension of 
$K$. Thus $\pi_K$ is a uniformizer of $L$.
Consider notations 
$\mfS_L, S_L,\dots $ (resp.\ $\mfS_{L_1}, S_{L_1},\dots$)
with respect to the uniformizer $\pi_K$ (resp.\ $\pi_{K,1}$) 
of $L$ (resp.\ $L_1$)
and the system $(\pi_{K,n})_{n\ge 0}$ (resp.\ $(\pi_{K,n+1})_{n\ge 0}$).
Let $\mfM$ be the rank-$2$ free Kisin module over $\mfS_{L_1}$ 
of height $1$ given by 
$\vphi(e_1,e_2)=(e_1,e_2)\begin{pmatrix}1 & u_{L_1} \\ 0 & E_{L_1}(u_{L_1})\end{pmatrix}$,
where $\{e_1,e_2\}$ is a basis of $\mfM$. 
Since $\mfM$ is of height $1$, there exists a
$G_{L_1}$-stable $\mbb{Z}_p$-lattice $T$ in a 
crystalline $\mbb{Q}_p$-representation of $G_{L_1}$, coming 
from a $p$-divisible group over the integer ring of $L_1$.
We see that $\tilde{T}:=\mrm{Ind}^{G_K}_{G_{L_1}}T$ is crystalline over $L_1$.
Since $L_1$ is unramified over $K_1$,
$\tilde{T}$ is in fact crystalline over $K_1$.
Furthermore, $\tilde{T}$  does not come from Kisin modules over $\mfS_K$
(that is, $\tilde{T}|_{G_{K_{\infty}}}$ is not isomorphic to $T_{\mfS_K}(\mfN)$ for 
any Kisin module $\mfN$ over $\mfS_K$).
To check this, it suffices to show that $\tilde{T}$ does not 
come from Kisin modules over $\mfS_L$. 
Essentially, this has been already shown in \cite[Example 4.2.3]{Li3}.
Therefore, Corollary \ref{Main3} implies that $\tilde{T}$
is not an object of $\mcal{C}^r_n$.

\noindent
(3) We may suppose $n\le m$.
Take any object $T$ of $\mrm{Rep}^{0,K_n\mathchar`-\mrm{st}}_{\mbb{Z}_p}(G_K)$.
Since $T$ has only one Hodge-Tate weight zero,
the condition $T|_{G_{K_n}}$ is semi-stable  implies  that $T|_{G_{K_n}}$ is unramified.
Thus if we denote by $K_T$ the splitting field of $T$,
then  $K_TK_n$
is unramified over $K_n$.

First we consider the case where $K_T$ contains $\zeta_{p^n}$.
In this case, we follow the idea given in the proof of (1).
Denote by $K'$ the maximum unramified subextension 
of $K_TK_n$ over $K$. 
Since $K_T$ contains $\zeta_{p^n}$,  
$K_TK_n/K$ is a Galois extension and 
hence $K'/K$ is also Galois.
Furthermore, it is not difficult to check that the equality  $K_TK_n=K'K_n$ holds.
Let $\rho_T\colon G_K\to GL_{\mbb{Z}_p}(T)\simeq GL_d(\mbb{Z}_p)$
be the continuous homomorphism associated with $T$, 
where $d$ is the $\mbb{Z}_p$-rank of $T$,
and define a continuous homomorphism $\rho_{T'}\colon G_K\to GL_d(\mbb{Z}_p)$
by the composite 
$G_K\twoheadrightarrow \mrm{Gal}(K'/K)\simeq \mrm{Gal}(K_TK_n/K_n)
\overset{\rho_T}{\hookrightarrow}  GL_d(\mbb{Z}_p)$.
Let $T'$ be the free $\mbb{Z}_p$-module of rank $d$ equipped with a 
$G_K$-action by $\rho_{T'}$.
Then $T'$ is isomorphic to $T$ as representations of $G_{K_n}$
and furthermore, $T'$ is crystalline over $K$.
It follows that  $T$ is an object of $\mcal{C}^0_n$.

Next we consider the general case. 
Denote by $T_0$ the induced representation of the rank one trivial $\mbb{Z}_p$-representation
of $G_{K(\zeta_{p^n})}$ to $G_K$.
We define a free $\mbb{Z}_p$-representation $\tilde{T}$ of $G_K$ 
by $\tilde{T}:=T\oplus T_0$.
The splitting fields of  $\tilde{T}$ and  $T_0$ are  
equal to $K_{\tilde{T}}:=K_T(\zeta_{p^n})$ and $K(\zeta_{p^n})$,
respectively.
The representations $\tilde{T}$ and $T_0$ are objects of 
$\mrm{Rep}^{0,K_n\mathchar`-\mrm{st}}_{\mbb{Z}_p}(G_K)$.
Moreover, the above argument implies that 
$\tilde{T}$ and $T_0$ are contained in $\mcal{C}^0_n$.
Therefore, there exist objects $\tilde{\mfM}$ and $\mfM_0$ of 
$\mrm{Mod}^r_{/\mfS_K}$ such that 
$T_{\mfS_K}(\tilde{\mfM})=\tilde{T}|_{G_{K_{\infty}}}$ and $T_{\mfS_K}(\mfM_0)=T_0|_{G_{K_{\infty}}}$.
Now we recall that the functor $T_{\mfS_K}$ is fully faithful. 
If we denote by  
$\mfrak{f}\colon \mfM_0\to \tilde{\mfM}$ a (unique) morphism of $\vphi$-modules over $\mfS_K$
corresponding to the natural projection $\tilde{T}\twoheadrightarrow T_0$,
then we obtain a split exact sequence 
$0\to \mfM_0\overset{\mfrak{f}}{\to} \tilde{\mfM}\to \mfM\to 0$
of $\vphi$-modules over $\mfS_K$.
Here, $\mfM$ is the cokernel of $\mfrak{f}$, which is a finitely generated  $\mfS_K$-module.
Since $\mfM$ is a direct summand of $\tilde{\mfM}$,
it is a projective $\mfS_K$-module. This implies that $\mfM$ is a free $\mfS_K$-module.
(Note that, for a finitely generated $\mfS_K$-module, 
it is projective over $\mfS_K$ if and only if  it is free $\mfS_K$ by Nakayama's lemma.)
Furthermore, $\mfM$ is of height $0$ and hence it is an object of  $\mrm{Mod}^0_{/\mfS_K}$.
Since the functor $T_{\mfS_K}$ is exact, 
we obtain 
$T_{\mfS_K}(\mfM)
=\mrm{ker}(T_{\mfS_K}(\tilde{\mfM})\overset{T_{\mfS_K}(\mfrak{f})}{\longrightarrow}  T_{\mfS_K}(\mfM_0))
=\mrm{ker}(\tilde{T}\twoheadrightarrow T_0)=T$.
Therefore, $T$ is an object of  $\mcal{C}^0_n$
by Corollary \ref{Main3}. 
\end{proof}

\subsubsection{}
Assume that $m\ge 1$.
Let $n\ge 1$ be an integer and  $T$ an object of the category $\mcal{C}^r_n$.
By definition of $\mcal{C}^r_n$, we have a (unique) semi-stable $\mbb{Q}_p$-representation 
$V_T$ of $G_K$
with the property that it is isomorphic to $T\otimes_{\mbb{Z}_p} \mbb{Q}_p$ 
as representations of $G_{K_n}$.
It is not clear  whether
$T$ is stable under the $G_K$-action of $V_T$ for any $T$ or not.
Such a stability problem of Galois actions may sometimes cause obstructions in integral theory,
and so the following question should be naturally considered.

\begin{question}
\label{question2}
Let the notation be as above. 
Does the $G_K$-action of $V_T$ preserves $T$ for any $T$?
\end{question}


We end this paper by showing an answer to this question.

\begin{proposition} 
(1) If $r=0$, then Question \ref{question2}
has an affirmative answer.

\noindent
(2) If $r\ge 1$, then Question \ref{question2}
has a negative answer.

\noindent
(3) Let the notation be as above. Suppose $e(r-1)<p-1$
where $e$ is the absolute ramification index of $K$.
If $T$ is potentially crystalline, then the $G_K$-action of $V_T$ preserves $T$.
Moreover, any $G_{K_{\infty}}$-stable $\mbb{Z}_p$-lattice of $V_T$
is stable under the $G_K$-action.
\end{proposition}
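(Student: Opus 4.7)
The three parts are handled separately.

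For part (1), a semi-stable $\mbb{Q}_p$-representation of $G_K$ with all Hodge-Tate weights equal to $0$ is necessarily unramified: the associated filtered $(\vphi,N)$-module has trivial Hodge filtration, which together with weak admissibility forces $N=0$ and $\det\vphi$ to be a unit, so $V_T$ is crystalline with trivial filtration, i.e.\ unramified. Since $K_n/K$ is totally ramified the natural map $G_{K_n}/I_{K_n}\to G_K/I_K$ is an isomorphism; combined with $I_K$ acting trivially on $V_T$, this shows that every $\mbb{Z}_p$-lattice in $V_T$ is $G_K$-stable iff it is $G_{K_n}$-stable. Applied to the image of $T$ under the $G_{K_n}$-equivariant identification $V_T\cong T\otimes\mbb{Q}_p$, this gives the affirmative answer.

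For part (3), we first show $V_T$ is crystalline over $K$. If $T|_{G_L}$ is crystalline for some finite $L/K$, replacing $L$ by $L\cdot K_n$ we may assume $L\supset K_n$; then the given $G_{K_n}$-equivariant isomorphism restricts to a $G_L$-equivariant one, so $V_T|_{G_L}$ is crystalline. Since $V_T$ is semi-stable over $K$ and the monodromy on $D_{\mrm{st}}(V_T)$ agrees, after the base change $K_0\hookrightarrow L_0$, with that on $D_{\mrm{st}}(V_T|_{G_L})$, we deduce $N=0$ on $D_{\mrm{st}}(V_T)$, proving $V_T$ crystalline over $K$. We then invoke the known fact that under $e(r-1)<p-1$, every $G_{K_\infty}$-stable $\mbb{Z}_p$-lattice in a crystalline $\mbb{Q}_p$-representation with Hodge-Tate weights in $[0,r]$ is automatically $G_K$-stable (for $r\le 1$ this follows from the Tate--Raynaud--Breuil classification of $p$-divisible groups, while the higher-weight range is of Fontaine--Laffaille type and is essentially due to Liu and earlier authors). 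The image of $T$ in $V_T$ is $G_{K_n}$-stable, hence $G_{K_\infty}$-stable, and so the preservation of $T$ and the moreover clause both follow.

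For part (2), we construct an explicit counterexample. Fix $r=1$ and take a non-split crystalline extension $V$ fitting into $0\to\mbb{Q}_p(\chi^{-1})\to V\to\mbb{Q}_p\to 0$ with class $c'\in H^1_f(G_K,\mbb{Q}_p(\chi^{-1}))$; the $G_K$-stable $\mbb{Z}_p$-lattices in $V$ correspond, up to scaling, to integral lifts of $c'$ in $H^1(G_K,\mbb{Z}_p(\chi^{-1}))$, while the $G_{K_n}$-stable lattices in $V|_{G_{K_n}}$ correspond to integral lifts of $c'|_{G_{K_n}}$. Using the inflation-restriction sequence attached to $\mrm{Gal}(K_n/K)$, one selects an integral class $c\in H^1(G_K,\mbb{Z}_p(\chi^{-1}))$ whose restriction to $G_{K_n}$ coincides with some integral $G_{K_n}$-stable lattice class in $V$, but which does not agree, modulo torsion, with the $G_{K_n}$-restriction of any integral lift of $c'$. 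The extension $T$ of $\mbb{Z}_p$ by $\mbb{Z}_p(\chi^{-1})$ classified by $c$ then lies in $\mcal{C}^1_n$ with $V_T=V$, yet by construction $T$ cannot be identified with any $G_K$-stable lattice of $V_T$ via a $G_{K_n}$-equivariant isomorphism, giving the desired negative answer.

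The main obstacle is the cohomological separation required in part (2), since one must exhibit a genuine mismatch between the restriction images of integral $G_K$-cohomology classes and integral lifts of $c'$ in $H^1(G_{K_n},\mbb{Z}_p(\chi^{-1}))$; the other two parts reduce to an unramifiedness argument and to a standard invocation of lattice stability in the Fontaine--Laffaille range.
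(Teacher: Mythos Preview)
Your treatments of parts (1) and (3) are essentially in line with the paper's. For (1) the paper argues, as you do, that $T$ is unramified and that $G_{K_n}$ together with inertia generates $G_K$. For (3) the paper simply cites \cite[Corollary 4.20]{Oz}; your preliminary reduction showing that $V_T$ is actually crystalline over $K$ is correct and makes the citation cleaner.

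Part (2), however, has a genuine gap that cannot be repaired along the lines you sketch. You fix $r=1$ and look for a counterexample with $V=V_T$ a \emph{crystalline} extension and $T$ a rank-two $\mbb{Z}_p[G_K]$-module whose restriction to $G_{K_n}$ sits as a lattice in $V$. But then $T$ is crystalline over $K_n$, hence potentially crystalline; since for $r=1$ the inequality $e(r-1)=0<p-1$ is automatic, part (3) itself forces the $G_K$-action on $V_T$ to preserve $T$. In other words, no crystalline $V$ with Hodge--Tate weights in $[0,1]$ can produce a counterexample, and the ``cohomological separation'' you flag as the main obstacle is in fact impossible to achieve in this setting. (There is also a convention slip: with the paper's normalization $\chi$ has Hodge--Tate weight $1$, so your extension by $\mbb{Q}_p(\chi^{-1})$ has weights $\{-1,0\}\not\subset[0,1]$; but even after correcting this the crystallinity obstruction remains.)

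The paper circumvents this by working with a genuinely non-crystalline semi-stable representation: it starts from the Tate curve $E_{\pi}$, whose Tate module $V_p(E_{\pi})$ is semi-stable with nonzero monodromy, chooses the lattice $T_0=\mbb{Z}_p\cdot p^n\mbf{e}\oplus\mbb{Z}_p\cdot\mbf{f}$ that is $G_{K_n}$-stable but not $G_K$-stable, and then sets $T=\mrm{Ind}^{G_K}_{G_{K_n}}T_0$. An explicit semi-stable $G_K$-structure $V_T=\bigoplus_{\sigma}V_{0,\sigma}$ is written down by hand, and one checks directly that $T$ is not preserved. The passage from $1\le n\le m_0$ to arbitrary $n\le m$ is then handled by a further induction from $K'=K(\zeta_{p^m})$. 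The essential point you are missing is that the counterexample must involve nontrivial monodromy; once you allow that, the Tate-curve construction gives everything concretely without any delicate cohomological bookkeeping.
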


\begin{proof}
(1) (This is a special case of (3).) 
The result easily follows from the fact that
$T$ as in the question is unramified in this case, and 
that $G_{K_n}$ and the inertia subgroup of $G_K$ generate $G_K$.

\noindent
(2) Our goal is to construct an example which gives a negative answer to the question.
First we consider the case where 
$1\le n\le m_0$.
Let $E_{\pi}$ be the Tate curve over $K$ associated to $\pi$.
Choose a basis $\{\mbf{e},\ \mbf{f}\}$ of the  $p$-adic Tate module $V=V_p(E_{\pi})$
 of $E_{\pi}$ such that the $G_K$-action on $V$ with respective to this basis is given by 
$$
g\mapsto 
\begin{pmatrix}
\chi(g) & c(g)\\
0 & 1
\end{pmatrix}.
$$ 
Here, $\chi\colon G_K\to \mbb{Z}^{\times}_p$ is the $p$-adic cyclotomic character
and $c\colon G_K\to \mbb{Z}_p$ is a map defined by 
$g(\pi_{K,{\ell}})=\zeta^{c(g)}_{p^{\ell}}\pi_{K,{\ell}}$ for any $g\in G_K$ and $\ell\ge 1$.
Let $T_0$ be the free $\mbb{Z}_p$-submodule of $V$ generated by 
$p^n\mbf{e}$ and $\mbf{f}$.
This is $G_{K_n}$-stable but not $G_K$-stable in $V$.
Now we put $T=\mrm{Ind}^{G_K}_{G_{K_n}}T_0$ and choose a
set $S\subset G_K$  of representatives of the quotient $G_K/G_{K_n}$.
Since $K_n/K$ is Galois, $T|_{G_{K_n}}$ is of the form $\oplus_{\sigma\in S}\ T_{0,\sigma}$.
Here, $T_{0,\sigma}$ is just $T_0$ as a $\mbb{Z}_p$-module and is
equipped with a $\sigma$-twisted $G_{K_n}$-action, that is, $g.x:=(\sigma^{-1} g \sigma)(x)$
for $g\in G_{K_n}$ and $x\in T_{0,\sigma}$.
We define elements $\mbf{e}_{\sigma}$ and $\mbf{f}_{\sigma}$ 
of $T_{0,\sigma}$
by  $\mbf{e}_{\sigma}:=p^n\mbf{e}$ and 
$\mbf{f}_{\sigma}:=\mbf{f}$.
We define $V_{0,\sigma}:=T_{0,\sigma}\otimes_{\mbb{Z}_p} \mbb{Q}_p$
and extend the $G_{K_n}$-action on $V_{0,\sigma}$ to 
$G_K$ by 
$$
g(\mbf{e}_{\sigma},\mbf{f}_{\sigma})=(\mbf{e}_{\sigma},\mbf{f}_{\sigma}) 
\begin{pmatrix}
\chi(g) & c(\sigma^{-1}g\sigma)/p^n\\
0 & 1
\end{pmatrix}
$$ 
for $g\in G_K$.
By definition the $G_K$-action on $V_{0,\sigma}$
does not preserve $T_{0,\sigma}$. It is not difficult to check that 
$V_{0,\sigma}$ is a semi-stable $\mbb{Q}_p$-representation  of $G_K$
with Hodge-Tate weights $\{0,1\}$.
If we put $V_T=\oplus_{\sigma\in S} V_{0,\sigma}$,
then we have the followings:
\begin{itemize}
\item $V_T$ is semi-stable over $K$ with Hodge-Tate weights $\{0,1\}$,
\item the natural isomorphism $V_T\simeq T\otimes_{\mbb{Z}_p} \mbb{Q}_p$ 
is compatible with $G_{K_n}$-actions, and 
\item the $G_K$-action on $V_T$ does not preserve $T$. 
\end{itemize} 
This gives a negative answer to Question \ref{question}
in the case $1\le n\le m_0$.

Next we consider a general case. We may suppose $n=m$.
Put $K'=K(\zeta_{p^m})$ and $K'_m=K_mK'$. 
Then $K'$ is an unramified Galois extension of $K$
and $\mrm{max}\{i\ge 0 \mid \zeta_{p^i}\in K' \}=m$.
Thus the above argument shows that
there exists a free $\mbb{Z}_p$-representation $T'$
of $G_{K'}$ and a semi-stable $\mbb{Q}_p$-representation $V_{T'}$ of $G_{K'}$
with Hodge-Tate weights $\{0,1\}$
which satisfies the followings:
\begin{itemize}
\item there exists an isomorphism  
$V_{T'}\simeq T'\otimes_{\mbb{Z}_p} \mbb{Q}_p$
of $G_{K'_m}$-representations, and 
\item the $G_{K'}$-action on $V_{T'}$ does not preserve $T'$. 
\end{itemize} 
We regard $T'$ as a $\mbb{Z}_p$-lattice of $V_{T'}$.
We define $T:=\mrm{Ind}^{G_K}_{G_{K'}}T'$ and $V_T:=\mrm{Ind}^{G_K}_{G_{K'}}V_{T'}$.
Note that $T$ is naturally regarded as a $\mbb{Z}_p$-lattice of $V$.
By definition, the $G_{K'}$-action  on $V_T$ does not preserve $T$.
In particular, the same holds also for the $G_K$-action. 
Since $K'/K$ is unramified, we see that $V_T$ 
is semi-stable over $K$.
Furthermore,
by Mackey's formula, we have natural  isomorphisms 
$T\otimes_{\mbb{Z}_p}\mbb{Q}_p
\simeq \mrm{Ind}^{G_{K_m}}_{G_{K'_m}}(T'\otimes_{\mbb{Z}_p}\mbb{Q}_p)
\simeq \mrm{Ind}^{G_{K_m}}_{G_{K'_m}}V_{T'}
\simeq V_T
$
of representations of $G_{K_m}$.
Therefore, we conclude that
Question \ref{question2} has a negative answer for any $n\ge 1$. 

\noindent
(3) This is a special case of  \cite[Corollary 4.20]{Oz}.
\end{proof}

\end{document}